\documentclass{siamart1116}
\usepackage{amsmath}
\usepackage{amsfonts}
\usepackage{amssymb}
\usepackage{algorithm}
\usepackage{algorithmic}
\usepackage{url}
\usepackage{subfigure}
\usepackage{titlesec}
\usepackage[titletoc]{appendix}
\usepackage{mathtools}

\def\noprint#1{}


%
%


\newtheorem{assumption}{Assumption}



\newcommand{\lambdamin}{\lambda_{\mbox{\rm\scriptsize{min}}}}
\newcommand{\lambdamax}{\lambda_{\mbox{\rm\scriptsize{max}}}}

\newcommand{\numin}{\nu_{\mbox{\rm\scriptsize{min}}}}

\newcommand{\R}{\mathbb{R}}

\newcommand{\cR}{\mathcal{R}}
\newcommand{\dist}{\mbox{\rm dist}}

\newcommand{\eps}{\epsilon}
\newcommand{\epsg}{\epsilon_g}
\newcommand{\epsH}{\epsilon_H}
\newcommand{\Glow}{G_{\mbox{\rm\scriptsize low}}}

\newcommand{\clocal}{c_{\text{\rm local}}}

\newcommand{\cgrad}{c_{\text{\rm grad}}}
\newcommand{\jgrad}{j_{\text{\rm grad}}}

\newcommand{\cnc}{c_{\text{\rm nc}}}
\newcommand{\jnc}{j_{\text{\rm nc}}}
\newcommand{\jbt}{\tilde{j}}
\newcommand{\cbt}{\tilde{c}}

\newcommand{\argmin}{\mbox{argmin}}

\newcommand{\cO}{{\cal O}}

\newcommand{\T}{\mathcal{T}}
\newcommand{\Kouter}{\mathcal{K}_{\mathrm{outer}}}
\newcommand{\Klarge}{\mathcal{K}_{\mathrm{large}}}
\newcommand{\Ktotal}{\mathcal{K}_{\mathrm{total}}}
\newcommand{\Klocal}{\mathcal{K}_{\mathrm{local}}}
\newcommand{\Cmeo}{\mathcal{C}_{\mathrm{meo}}}
\newcommand{\Nmeo}{N_{\mathrm{meo}}}
\newcommand{\rank}{\mathrm{rank}}
\newcommand{\muhalf}{\frac12}
\newcommand{\trace}{\mbox{trace}\,}
\newcommand{\calpha}{c_{\alpha}}
\newcommand{\cbeta}{c_{\beta}}
\newcommand{\cgamma}{c_{\gamma}}
\newcommand{\cepsilon}{c_{\epsilon}}
\newcommand{\mincacb}{\min\left\{1, \calpha^{3/2}\cbeta^{3/2}\right\}}

\newcommand{\cfac}{c_s}

\DeclarePairedDelimiter{\ceil}{\lceil}{\rceil}

\definecolor{darkgreen}{RGB}{34, 139, 34}

\title{A Line-Search Descent Algorithm for Strict Saddle Functions with Complexity Guarantees%
  \thanks{Version of \today. Research supported by NSF Awards 1628384, 1634597, and 1740707; Subcontract  8F-30039 from Argonne National Laboratory; and Award N660011824020 from the DARPA Lagrange Program.}}

\author{Michael O'Neill and Stephen J. Wright}

\begin{document}

\maketitle

\begin{abstract}
We describe a line-search algorithm which achieves the best-known worst-case complexity results for problems with a certain ``strict saddle'' property that has been observed 
to hold in low-rank matrix optimization problems. Our algorithm is adaptive, in the sense that it makes use of backtracking line searches and does not require prior knowledge of the parameters that define the strict saddle property.
\end{abstract}

\section{Introduction.}
\label{sec:intro}

Formulation of machine learning (ML) problems as nonconvex optimization problems has produced significant advances in several key areas.
While general nonconvex optimization is difficult, both in theory and in practice, the problems arising from ML applications often have structure that makes them  solvable by local descent methods. 
For example, for functions with the ``strict saddle'' property, nonconvex optimization methods can efficiently find local (and often global) minimizers \cite{JSun_QQu_JWright_2015}.

In this work, we design an optimization algorithm for a class of low-rank matrix problems that includes matrix completion, matrix sensing, and Poisson prinicipal component analysis.
Our method seeks a rank-$r$ minimizer of the function $f(X)$, where $f:\R^{n \times m} \to \R$.
The matrix $X$ is parametrized explicitly as the outer product of two matrices $U \in \R^{n \times r}$ and $V \in \R^{m \times r}$, where $r \le \min(m,n)$. We make use throughout of the notation
\begin{equation} \label{eq:W}
W = \left[ \begin{matrix} U \\ V \end{matrix} \right] \in \R^{(m+n)
  \times r}.
\end{equation}
The problem is reformulated in terms of $W$ and an objective function $F$ as follows:
\begin{equation} \label{eq:fdef}
  \min_{W} \, F(W) := f(UV^T), \quad \mbox{where $W$, $U$, $V$ are
    related as in \eqref{eq:W}.}
\end{equation}
Under suitable assumptions as well as the use of a specific regularizer, these problems obey the ``robust strict saddle property'' described by \cite{ZZhu_QLi_GTang_MBWakin_2017}.
This property divides the search space into three regions: one in which the gradient of $F$ is large, a second in which the Hessian of $F$ has a direction of large negative curvature, and a third that is a neighborhood of the solution set, inside which a local regularity condition holds.

In this paper, we describe and analyze an algorithm with a favorable worst-case complexity for this class of problems.
We characterize the maximum number of iterations as well as the maximum number of gradient evaluations required to find an approximate second order solution, showing that these quantities have at most a logarithmic dependence on the accuracy parameter for the solution.
This is a vast improvement over the worst-case complexity of methods developed for general smooth, nonconvex optimization problems, which have a polynomial dependence on the inverse of the accuracy parameter (see, for example, \cite{YCarmon_JCDuchi_OHinder_ASidford_2018,CWRoyer_MONeill_SJWright_2019}).

While other algorithms for optimizing robust strict saddle problems have previously been developed, knowledge of the strict saddle parameters is required to obtain a worst case complexity that depends at most logarithmically on the solution accuracy \cite{JSun_QQu_JWright_2015}.
For low-rank matrix problems, the strict saddle parameters depend on the singular values of the matrices of the optimal solution set \cite{ZZhu_QLi_GTang_MBWakin_2017}, and are thus unlikely to be known a-priori.
Therefore, an essential component of any implementable method for low-rank matrix problems is adaptivity to the optimization geometry, a property achieved by the method developed in this paper.
Our method maintains  an estimate of the key strict saddle parameter that is used to predict which of the three regions described above contains the current iterate. 
This prediction determines whether a negative gradient step or a negative curvature step is taken. 
When the method infers that the iterate is in a neighborhood of the solution set, a monitoring strategy is employed to detect fast convergence, or else flag that an incorrect prediction has been made. By reducing our parameter estimate after incorrect predictions, our method naturally adapts to the optimization landscape and achieves essentially the same behavior as an algorithm for which the critical parameter is known.

\paragraph{Notation and Background.}
We make use in several places of ``hat'' notation for matrices in the form of $W$ in \eqref{eq:W} in which the elements in the bottom half of the matrix are negated, that is
\begin{equation} \label{eq:hatW}
  \hat{W} := \left[ \begin{matrix} U \\ -V \end{matrix} \right].
\end{equation}

We use the notation $\langle A, B \rangle = \trace (A^\top B)$.

For a scalar function $h(Z)$ with a matrix variable $Z \in \R^{p \times q}$, the gradient $\nabla h(Z)$ is an $p \times q$ matrix whose $(i,j)$-th entry is $\frac{\partial h(Z)}{\partial Z_{i,j}}$ for all $i =1,2,\dotsc,p $ and $j=1,2,\dotsc,q$. 
In some places, we take the Hessian of $\nabla^2 h(Z)$ to be an $pq \times pq$ matrix whose $(i,j)$ element is $\frac{\partial^2 h(Z)}{\partial z_i  \partial z_j}$ where $z_i$ is the $i$-th coordinate of the vectorization of $Z$.  
In other places, the Hessian is represented as a bilinear form defined by $[\nabla^2 h(Z)] (A,B) = \sum_{i,j,k,l} \frac{\partial^2 h(Z)}{\partial Z_{i,j} Z_{k,l}} A_{i,j} B_{k,l}$ for any $A, B \in \R^{p\times q}$. 
We also write $\langle A, \nabla^2 h(Z) B \rangle$ for this same object.

Under these definitions, we can define the maximum and minimum eigenvalues of $\nabla^2 h(Z)$ as 
\begin{equation} \label{eq:lambdamaxmin}
\lambdamax(\nabla^2 h(Z)) = \max_D \frac{\langle D, \nabla^2 h(Z) D\rangle}{\|D\|_F^2}, \quad
\lambdamin(\nabla^2 h(Z)) =\\ \min_D \frac{\langle D, \nabla^2 h(Z) D\rangle}{\|D\|_F^2}.
\end{equation}

Occasionally we need to refer to the gradient and Hessian of the original function $f$, prior to reparameterization. We denote the gradient by $\nabla f(X)$ and the Hessian by $\nabla^2 f(X)$, where $X = U V^\top$. 

Our algorithm seeks a point that approximately satisfies second-order necessary optimality conditions for a regularized objective function $G:\R^{(m+n) \times r} \to \R$ to be defined later in (\ref{eq:gdef}), that is,
\begin{equation} \label{eq:optconds}
\|\nabla G(W)\|_F \leq \epsg, \quad \quad \lambdamin(\nabla^2 G(W)) \geq -\epsH,
\end{equation}
for small positive tolerances $\epsg$ and $\epsH$.

We assume that explicit storage and calculation of the Hessian $\nabla^2 G(W)$ is
undesirable, but that products of the form
$\nabla^2 G(W) Z$ can be computed efficiently for arbitrary matrices $Z \in \mathbb{R}^{(n+m)\times r}$. Computational differentiation techniques can be used to evaluate such products at a cost that is a small multiple of the cost of the gradient evaluation $\nabla G(W)$ \cite{AGriewank_AWalther_2008}.

\section{Related Work.} \label{sec:related}

One major class of algorithms that have been developed to solve low-rank matrix problems utilizes customized initialization procedures to find a starting point which lies in the basin of attraction of a global minimizer. 
A standard optimization procedure, such as gradient descent, initialized at this point, typically converges to the minimizer. 
Due to the local regularity of the function in a neighborhood around the solution set, many of these methods have a linear rate of convergence after the initialization step. 
However, the spectral methods used to find a suitable starting point are often computationally intensive and involve procedures such as reduced singular value decomposition and/or projection onto the set of rank $r$ matrices. 
These methods have been applied to a wide variety of problems including phase retrieval \cite{EJCandes_XLi_MSoltanolkotabi_2015}, blind-deconvolution \cite{XLi_SLing_TStrohmer_KWei_2019}, matrix completion \cite{RHKeshavan_AMontanari_SOh_2010, RSun_ZQLuo_2016}, and matrix sensing \cite{STu_RBoczar_MSimchowitz_MSoltanolkotabi_BRecht_2016}.

Another line of work focuses on characterizing the set of critical points of $f$.
Many low-rank recovery problems are shown to obey the strict saddle assumption, in which all saddle points of the function exhibit directions of negative curvature in the Hessian. 
Additionally, these problems often have the favorable property that all local minimizers are global minimizers.
Examples include dictionary learning \cite{JSun_QQu_JWright_2016}, phase retrieval \cite{JSun_QQu_JWright_2018}, tensor decomposition \cite{RGe_FHuang_CJin_YYuan_2015}, matrix completion \cite{RGe_CJin_YZheng_2017}, and matrix sensing \cite{SBhojanapalli_BNeyshabur_NSrebro_2016, RGe_CJin_YZheng_2017}.
When all these properties hold, gradient descent initialized at a random starting point converges in the limit, with probability 1, to a global minimizer \cite{JDLee_MSimchowitz_MIJordan_BRecht_2016}. 
Two recent works demonstrate that randomly initialized gradient descent has a global linear rate of convergence when applied to phase retrieval \cite{YChen_YChi_JFan_CMa_2019} and dictionary learning \cite{DGilboa_SBuchanan_JWright_2019}.

A number of works go a step further and characterize the global optimization geometry of these problems.
These problems satisfy the robust strict saddle property, in which the domain is partitioned such that any point is either in a neighborhood of a global solution, has a direction of sufficient negative curvature in the Hessian, or has a large gradient norm.
This property has been shown to hold for tensor decomposition \cite{RGe_FHuang_CJin_YYuan_2015}, phase retrieval \cite{JSun_QQu_JWright_2018}, dictionary learning \cite{JSun_QQing_JWright_2016}, and general low-rank matrix problems \cite{ZZhu_QLi_GTang_MBWakin_2017}. 
Due to the partitioning, methods developed for general non-convex problems with saddle point escaping mechanisms are of interest in thie context. Indeed, methods such as gradient descent with occasional perturbations to escape saddle points appear to converge at a global linear rate.
However, a close reading of these methods reveals that knowledge of the strict saddle parameters defining the separate regions of the domain is required to escape saddle points efficiently and obtain linear convergence rates. 
In particular, for gradient descent with perturbations, these parameters are used to decide when the perturbations should be applied. 
The same issue arises for methods developed specifically for strict saddle functions, such as the second-order trust region method of \cite{JSun_QQu_JWright_2015} and the Newton-based method of \cite{SPaternain_AMokhtari_ARibeiro_2019}, the latter requiring knowledge of a strict saddle parameter to flip the eigenvalues of the Hessian matrix at every iteration.
Unfortunately, for low-rank matrix problems of the form (\ref{eq:fdef}), these parameters correspond to the first  and $r$-th singular value of the optimal solution \cite{ZZhu_QLi_GTang_MBWakin_2017} --- information that is unlikely to be known a-priori.

In this work, we develop the first method for low-rank matrix problems whose worst-case complexity depends at most logarithmically on the solution accuracy, without relying on  an expensive initialization procedure or knowledge of the strict saddle parameters.
The method maintains its estimate of the crucial strict saddle parameter along with gradient and negative curvature information to infer which of the three regions in the partition mentioned above is occupied  by the current iterate.
By choosing appropriate steps based on this inference, the method converges to an approximate second-order stationary point from any starting point while dependence on the approximation tolerances in \eqref{eq:optconds} is only logarithmic.

\section{Robust Strict Saddle Property and Assumptions.} \label{sec:assumptions}

Here we provide the background and assumptions needed to describe the robust strict saddle property for low-rank matrix problems, as well as the additional assumptions required by our optimization algorithm.
Section~\ref{subsec:regularitycond} provides defitions for functions invariant under orthogonal transformations, our local regularity condition, and the robust strict saddle property. Section~\ref{subsec:regularization} discusses the regularization term that we add to $F(W)$ and provides definitions for the gradient and Hessian of the regularized function. 
Finally, we describe our assumptions and the strict saddle parameters in Section~\ref{subsec:assumssparams}

\subsection{Regularity Condition and Robust Strict Saddle Property.}
\label{subsec:regularitycond}

Let \\$\mathcal{O}_r := \{ R \in \R^{r \times r} : R^\top R = I\}$ be the set of $r \times r$ orthogonal matrices. We have the following definition.
\begin{definition}
Given a function $h(Z) : \R^{p \times r} \rightarrow \R$ we say that $h$ is invariant under orthogonal transformations if
\[
h(ZR) = h(Z),
\]
for all $Z \in \R^{p \times r}$ and $R \in \mathcal{O}_r$.
\end{definition}
It is easy to verify that $F$ defined in \eqref{eq:fdef} satisfies this property.

We note that the Frobenius norm of $Z$ is invariant under orthogonal transformation as well, i.e. $\|ZR\|_F = \|Z\|_F$ for all $R \in \mathcal{O}_r$. We can define the distance between two matrices $Z^1$ and $Z^2$ as follows:
\begin{equation} \label{eq:distdef}
\dist(Z^1, Z^2) := \min_{R \in \mathcal{O}_r} \, \|Z^1 - Z^2 R\|_F.
\end{equation}
For convenience, we denote by $R(Z^1,Z^2)$ the orthogonal matrix that achieves the minimum in \eqref{eq:distdef}, that is,
\begin{equation} \label{eq:rdef}
R(Z^1,Z^2) := \argmin_{R \in \mathcal{O}_r} \, \|Z^1 - Z^2 R\|_F
\end{equation}

We can now define the local regularity condition of interest in this work; these conditions were defined in a slightly more general setting in \cite{EJCandes_XLi_MSoltanolkotabi_2015,STu_RBoczar_MSimchowitz_MSoltanolkotabi_BRecht_2016}. 
%
\begin{definition} \label{def:regularitycond}
Suppose $h : \R^{p \times r}\rightarrow \R$ is invariant under orthogonal transformations. Let $Z^* \in \R^{p \times r}$ be a local minimium of $h$. Define the ball of radius $\delta$ around $Z^*$ as
\[
B(Z^*, \delta) := \left\{Z \in \R^{p \times r} :
\dist(Z,Z^*) \leq \delta \right\},
\]
where $\dist(\cdot,\cdot)$ is defined in \eqref{eq:distdef}. Then, we say that $h(Z)$ satisfies the $(\alpha, \beta, \delta)$-regularity condition at $Z^*$ 
(where $\alpha$, $\beta$, and $\delta$ are all positive quantities) if for all $Z \in B(Z^*, \delta)$, we have
\begin{equation} \label{eq:regularitydef}
\langle \nabla h(Z), Z - Z^*R\rangle \geq \alpha \, \dist(Z,Z^*)^2 + \beta \, \|\nabla h(Z)\|^2_F,  \quad \mbox{where $R = R(Z,Z^*)$.}
\end{equation}
\end{definition}

Note that $\alpha$ and $\beta$ in Definition~\ref{def:regularitycond} must satisfy $\alpha \beta \leq 1/4$ because of the Cauchy-Schwarz inequality, which indicates that for any $R \in \mathcal{O}_r$ we have
\[
\langle \nabla h(Z), Z - Z^* R\rangle
\leq \dist(Z,Z^*) \|\nabla h(Z)\|_F,
\]
and the inequality of arithmetic and geometric means,
\[
\alpha \, \dist(Z,Z^*)^2 + \beta \, \|\nabla h(Z)\|^2_F \geq 2
\sqrt{\alpha \beta} \dist(Z, Z^*) \|\nabla h(Z)\|_F.
\]
In addition, \eqref{eq:regularitydef} implies that
\begin{equation} \label{eq:gradientUB}
\beta \|\nabla h(Z)\|_F \leq \dist(Z,Z^*),
\end{equation}
holds for all $Z \in B(Z^*, \delta)$, by the Cauchy-Schwarz inequality and $\alpha \dist(Z,Z^*)^2 \geq 0$.

One important consequence of the regularity condition is local convergence of gradient descent at a linear rate.
\begin{lemma} \label{lem:localconvergence}
Let the function $h:\R^{p \times r} \to \R$ restricted to a $\delta$ neigborhood of $Z^* \in \R^{p \times r}$ satisfies the $(\alpha, \beta, \delta)$-regularity condition and suppose that $Z^0 \in B(Z^*, \delta)$. Then, after $k+1$ steps of gradient descent applied to $h$ starting from $Z^0$, with stepsizes $\nu_j \in (0, 2\beta]$ for all $j=0,1,\dotsc k$, we have
\begin{equation} \label{eq:uc7}
\dist^2(Z^{k+1}, Z^*) \leq \left[\prod_{j=0}^k (1 - 2 \nu_j \alpha) \right] \dist^2(Z^0, Z^*),
\end{equation}
so that $Z^{k+1} \in B(x^*, \delta)$.
\end{lemma}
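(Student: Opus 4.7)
The plan is to prove a one-step contraction
\[
\dist(Z^{j+1}, Z^*)^2 \le (1 - 2\nu_j \alpha)\, \dist(Z^j, Z^*)^2,
\]
and then iterate it. Along the way we must verify that each iterate stays in $B(Z^*,\delta)$ so that the regularity condition applies at the next step. I would proceed by induction on $j$, with the inductive hypothesis that $Z^j \in B(Z^*,\delta)$ and that \eqref{eq:uc7} holds up through step $j$.

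For the one-step bound at index $j$, I would set $R_j := R(Z^j, Z^*)$, write out the gradient descent update $Z^{j+1} = Z^j - \nu_j \nabla h(Z^j)$, and expand
\[
\|Z^{j+1} - Z^* R_j\|_F^2 = \|Z^j - Z^* R_j\|_F^2 - 2\nu_j \langle \nabla h(Z^j),\, Z^j - Z^* R_j \rangle + \nu_j^2 \|\nabla h(Z^j)\|_F^2.
\]
Since $R_j$ is the minimizer in \eqref{eq:rdef}, the first term equals $\dist(Z^j, Z^*)^2$. Applying the $(\alpha,\beta,\delta)$-regularity condition \eqref{eq:regularitydef} to the inner product (which is legal by the induction hypothesis $Z^j \in B(Z^*,\delta)$) yields
\[
\|Z^{j+1} - Z^* R_j\|_F^2 \le (1 - 2\nu_j \alpha)\, \dist(Z^j,Z^*)^2 + \nu_j(\nu_j - 2\beta)\, \|\nabla h(Z^j)\|_F^2.
\]
The stepsize restriction $\nu_j \in (0, 2\beta]$ makes the second term nonpositive, and because $\dist(Z^{j+1},Z^*)^2 \le \|Z^{j+1} - Z^* R_j\|_F^2$ (as $R_j$ need not be optimal for $Z^{j+1}$), this gives the desired one-step contraction.

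To close the induction, I need $1 - 2\nu_j\alpha \in [0,1]$, so that $\dist(Z^{j+1},Z^*) \le \dist(Z^j,Z^*) \le \delta$ and the next iterate still lies in $B(Z^*,\delta)$. Nonnegativity follows from $\nu_j \le 2\beta$ together with the bound $\alpha\beta \le 1/4$ noted immediately after Definition~\ref{def:regularitycond}, which gives $2\nu_j\alpha \le 4\alpha\beta \le 1$. Telescoping the one-step bound from $j=0$ to $j=k$ then yields \eqref{eq:uc7}, and the containment $Z^{k+1} \in B(Z^*,\delta)$ is automatic from the same monotonicity.

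The only subtle point is the use of a possibly suboptimal rotation $R_j$ when bounding $\dist(Z^{j+1},Z^*)^2$; the main obstacle would be if one tried to propagate equalities rather than inequalities here, but the one-sided bound $\dist(Z^{j+1},Z^*)^2 \le \|Z^{j+1} - Z^* R_j\|_F^2$ is exactly what is needed, so no further work is required.
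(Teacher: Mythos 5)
Your proposal is correct and follows essentially the same argument as the paper: expand the squared distance using the suboptimal rotation $R(Z^j,Z^*)$, apply the regularity condition to the cross term, drop the gradient term via $\nu_j \le 2\beta$, and use $\alpha\beta \le 1/4$ to keep the contraction factor in $[0,1]$ so the iterates remain in $B(Z^*,\delta)$ and the bound telescopes. No gaps.
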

\begin{proof}
This proof follows a similar argument to that of \cite[Lemma~7.10]{EJCandes_XLi_MSoltanolkotabi_2015} 
Denote $R(Z,Z^*)$ be defined as in \eqref{eq:rdef}. By the definition of the distance \eqref{eq:distdef}, our regularity condition \eqref{eq:regularitydef}, and $\nu_j \leq 2\beta$, we have when $\dist(Z^j,Z^*) \le \delta$ that 
\begin{align*}
&\dist^2(Z^{j+1}, Z^*) \\
&= \|Z^{j+1} - Z^*R(Z^{j+1},Z^*)\|^2_F \\
&\leq \|Z^{j+1} - Z^*R(Z^j,Z^*)\|^2_F &&
\mbox{by \eqref{eq:distdef}} \\
&= \|Z^j - \nu_j \nabla h(Z^j) - Z^*R(Z^j,Z^*)\|^2_F \\
&= \|Z^j - Z^*R(Z^j,Z^*)\|^2_F + \nu_j^2 \|\nabla h(Z^j)\|^2_F \\
&\quad -2 \nu_j \langle \nabla h(Z^j), Z^j - Z^*R(Z^j,Z^*)\rangle \\
&\leq (1-2\nu_j \alpha) \dist^2 (Z^j, Z^*) - \nu_j (2 \beta - \nu_j) \|\nabla h(Z^j)\|^2_F 
&& \mbox{by \eqref{eq:regularitydef}} \\
&\leq (1-2\nu_j \alpha) \dist^2 (Z^j, Z^*)
&& \mbox{by $\nu_j \leq 2\beta$}.
\end{align*}
Since $\alpha \beta \leq 1/4$ and $\nu_j \leq 2 \beta$, we have that $0 \leq 1 - 2 \nu_j \alpha \leq 1$. Thus $\dist(Z^{j+1},Z^*) \le \delta$ too. By applying this argument inductively for $j=0,1,\dotsc,k$, we obtain the result.
\end{proof}

We are now ready to define the robust strict saddle property, for functions invariant under orthogonal transformations.
\begin{definition} \label{def:strictsaddle}
Suppose that the twice continuously differentiable 
function $h(Z) : \R^{p \times r} \rightarrow \R$ is invariant under orthogonal transformations. 
For the positive quantities $\alpha$, $\beta$, $\gamma$, $\epsilon$, $\delta$, function $h$ satisfies the $(\alpha, \beta, \gamma, \epsilon, \delta)$-robust strict saddle property if at any point $Z$, at least one of the following applies:
\begin{enumerate}
\item There exists a local minimum $Z^* \in \R^{p \times r}$ such that   $\dist(Z,Z^*) \leq \delta$, and the function $h$ restricted to the neighborhood $\dist(Z', Z^*) \leq 2\delta$ satisfies the $(\alpha,\beta,2\delta)$-regularity condition at $Z^*$ of Definition~\ref{def:regularitycond};
\item $\lambdamin(\nabla^2 h(Z)) \leq - \gamma$; or
\item $\|\nabla h(Z)\|_F \geq \epsilon$.
\end{enumerate}
\end{definition}
Under this property, each element $Z$ of the domain belongs to at least one of three sets, each of which has a property that guarantees fast convergence of descent methods. The parameters that define these regions for low-rank matrix problems are discussed in Section~\ref{subsec:assumssparams}.

\subsection{Regularization.} \label{subsec:regularization}

Let $X^* \in \R^{n\times m}$ be a critical point of $f$
defined in \eqref{eq:fdef}, that is, $\nabla f(X^*)=0$. Suppose that
$X^*$ has rank $r \le \min(m,n)$ (see Assumption~\ref{assum:xstar}
below), and let $X^* = \Phi \Sigma \Psi^\top$ be the SVD of $X^*$,
where $\Phi \in \R^{n \times r}$ and $\Psi \in \R^{m \times r}$ have
orthonormal columns and $\Sigma$ is positive diagonal. Define
\begin{equation} \label{eq:UsVs}
U^* = \Phi \Sigma^{1/2} R, \quad V^* = \Psi \Sigma^{1/2} R
\end{equation}
for some $R \in \mathcal{O}_r$. To remove ambiguity in the matrix $W$ that corresponds to $X^*$, we add to $F(W)$ the regularization term $\rho$ defined by
\[
\rho(W) := \frac14 \left\|U^\top  U - V^\top V\right\|_F^2.
\]
The regularized optimization problem that we solve in this paper is thus
\begin{equation} \label{eq:gdef}
\underset{U \in \R^{n \times r}, V \in \R^{m \times r}}{\min} \,
G(W) := F(W) + \muhalf \rho(W).
\end{equation}
The regularization parameter $1/2$ is chosen for convenience and is sufficient to ensure the robust strict saddle property holds. Note that for $(U,V)=(U^*,V^*)$ defined in
\eqref{eq:UsVs}, and for any $R \in \cO_r$, with $W^*$ and $\hat{W}^*$
defined as in  \eqref{eq:W} and \eqref{eq:hatW}, we have
\begin{equation} \label{eq:ut1}
  (\hat{W}^*)^\top W^* = (U^*)^T U^* - (V^*)^T V^*=R^T \Sigma R - R^T
  \Sigma R = 0.
\end{equation}
We can show from \eqref{eq:UsVs} together with the definitions of $X^*$ and $W^*$ that
\begin{equation} \label{eq:XWstar}
  \|W^*\|^2 = 2 \|X^*\|, \quad \| W^* (W^*)^T \|_F = 2 \|X^*\|_F.
  \end{equation}
(We include a proof of these claims in Appendix \ref{app:matrixeqs}, for completeness.)

For the gradient of $G(W)$, we have
\begin{equation} \label{eq:ggrad}
\nabla G(W) =
\left[
\begin{matrix}
\nabla f(X)V \\
(\nabla f(X))^\top U
\end{matrix}
\right]
+ \muhalf \hat{W} \hat{W}^\top W,
\end{equation}
where $X = UV^\top$. Given matrices $D$ and $\hat{D}$ defined by
\begin{equation} \label{eq:def.D}
  D = \left[ \begin{matrix} S \\ Y \end{matrix} \right], \quad
  \hat{D} = \left[ \begin{matrix} S \\ -Y \end{matrix} \right], \quad
  \mbox{where $S \in \R^{n \times r}$ and $Y \in \R^{m \times r}$},
\end{equation}
the bilinear form of the Hessian of $G$ is given by
\begin{align}
  \nonumber
[\nabla^2 G(W)](D,D)  & =[\nabla^2 f(X)](SV^\top+UY^\top,
SV^\top+UY^\top) + 2\langle\nabla f(X), S Y^\top\rangle \\
\label{eq:ghess}
& \quad\quad + \muhalf \langle \hat{W}^\top W, \hat{D}^\top D\rangle
+ \frac{1}{4} \|\hat{W}^\top D + D^\top \hat{W}\|_F^2,
\end{align}
where $X = UV^\top$.

\subsection{Assumptions and Strict Saddle Parameters.}
\label{subsec:assumssparams}

We make the following assumptions on $f(X)$, which are identical to those found in \cite{ZZhu_QLi_GTang_MBWakin_2017}.  The first is about existence of a rank-$r$ critical point for $f$.

\begin{assumption} \label{assum:xstar}
$f(X)$ has a critical point $X^* \in \R^{n\times m}$ with rank $r$.
\end{assumption}

The second assumption is a restricted strong convexity condition for $f$.

\begin{assumption} \label{assum:frestrictedsc}
The twice continuously differentiable function $f:\R^{n \times m} \to \R$ is $(2r,4r)$-restricted strongly convex and smooth, 
that is, for any matrices $X,T \in \R^{n \times m}$ with $\rank(X) \leq 2r$ and $\rank(T) \leq 4r$, the Hessian $\nabla^2 f(X)$ satisfies
\begin{equation}
a \|T\|^2_F \leq [\nabla^2 f(X)](T,T) \leq b \|T\|^2_F,
\end{equation}
for some positive scalars $a$ and $b$.
\end{assumption}

Assumption~\ref{assum:frestrictedsc} implies that the original function, prior to splitting the variable $X$ into $U$ and $V$, obeys a form of restricted strong convexity and smoothness. This assumption is satisfied when $4r$-RIP holds, which occurs with high probability (under certain assumptions) for such problems as low-rank matrix completion and matrix sensing \cite{BRecht_MFazel_PAParrilo_2010}.

Now, we are able to define the robust strict saddle conditions for $G$. We state the following slightly abbreviated version of \cite[Theorem~1]{ZZhu_QLi_GTang_MBWakin_2017}.
\begin{theorem} \label{thm:strictsaddle}
Let $G(W)$ be defined as in \eqref{eq:gdef}. 
For the critical point $X^* \in \R^{n \times m}$, with rank $r$, suppose that $X^* = U^* (V^*)^T$, where $U^* \in \R^{n \times r}$ and $V^* \in \R^{m \times r}$ are defined as in (\ref{eq:UsVs}), and define $W^* = \left[ \begin{matrix} U^* \\ V^* \end{matrix} \right]$. 
Let $\dist(\cdot,\cdot)$ be defined as in \eqref{eq:distdef}, and let $\sigma_r(Z)>0$ denote the $r$-th singular value of the matrix $Z$. 
Suppose that Assumptions~\ref{assum:xstar} and \ref{assum:frestrictedsc} are satisfied for positive $a$ and $b$ such that 
\[
    \frac{b-a}{a+b} \le \frac{1}{100} \frac{\sigma_r^{3/2}(X^*)}{\|X^*\|_F\|X^*\|^{1/2}}.
\]
Define the following regions of the space of matrices $\R^{(m+n) \times r}$:
\begin{align*}
\cR_1 &:= \left\{W : \dist(W, W^*) \leq \sigma_r^{1/2}(X^*)\right\}, \\
\cR_2 &:= \Big\{W : \sigma_r(W) \leq \sqrt{\frac12}\sigma_r^{1/2}(X^*), \;\; \|WW^\top\|_F \leq \frac{20}{19} \|W^* (W^*)^\top\|_F \Big\}, \\
\cR'_3 &:= \Big\{W : \dist(W,W^*) > \sigma_r^{1/2}(X^*), \;\; \|W\| \leq \frac{20}{19} \|W^*\|, \\
&\quad\quad\quad \sigma_r(W) > \sqrt{\frac12} \sigma_r^{1/2}(X^*), \;\; \|WW^\top\|_F \leq \frac{20}{19}\|W^*(W^*)^\top\|_F \Big\}, \\
\cR''_3 &:= \Big\{W : \|W\| > \frac{20}{19} \|W^*\| = \sqrt{2}\frac{20}{19} \|X^*\|^{1/2}, 
\;\; \|WW^\top\|_F \leq \frac{10}{9}\|W^*(W^*)^\top\|_F \Big\}, \\
\cR'''_3 &:= \Big\{W : \|WW^\top\|_F > \frac{10}{9}\|W^*(W^*)^\top\|_F = \frac{20}{9} \|X^*\|_F \Big\}.
\end{align*}
(Note that the definitions of $\cR_3''$ and $\cR_3'''$ make use of \eqref{eq:XWstar}.)
Then there exist positive constants $\calpha$, $\cbeta$, $\cgamma$, and $\cepsilon$ such that $G(W)$ has the following strict saddle property.
\begin{enumerate}
\item For any $W \in \cR_1$, $G(W)$ satisfies the local regularity condition:
\begin{align} \label{eq:gregular}
\langle \nabla G(W), & W - W^* R(W,W^*)\rangle \\
\nonumber
& \geq \calpha \sigma_r(X^*) \, \dist^2 (W,W^*)
+ \frac{\cbeta}{\|X^*\|} \|\nabla G(W)\|_F^2,
\end{align}
where $\dist(W,W^*)$ is defined in \eqref{eq:distdef} and $R(W,W^*)$ is defined in \eqref{eq:rdef}. That is, definition \eqref{eq:regularitydef} is satisfied with $h=G$, $x=W$, $\alpha= \calpha \sigma_r(X^*)$, $\beta = \cbeta \|X^*\|^{-1}$, and $\delta=\sigma_r^{1/2}(X^*)$.

\item For any $W \in \cR_2$, $G(W)$ has a direction of large negative curvature, that is, 
\begin{equation} \label{eq:gnegcurve}
\lambdamin(\nabla^2 G(W)) \leq -\cgamma \sigma_r(X^*).
\end{equation}
\item For any $W \in \cR_3 = \cR'_3 \cup \cR''_3 \cup \cR'''_3$,
  $G(W)$ has a large gradient, that is, 
  \begin{subequations} \label{eq:glargegrad}
    \begin{alignat}{2}
      \|\nabla G(W)\|_F &\geq \cepsilon \sigma_r^{3/2} (X^*), \quad &&\mbox{for all $W \in \cR'_3$;} \label{eq:glargegrad1} \\
\|\nabla G(W)\|_F &\geq \cepsilon \|W\|^3, \quad  && \mbox{for all $W \in  \cR''_3$}; \label{eq:glargegrad2} \\
\|\nabla G(W)\|_F &\geq
\cepsilon \|WW^\top\|_F^{3/2}, \quad && \mbox{for all $W \in \cR'''_3$.} \label{eq:glargegrad3}
\end{alignat}
\end{subequations}
\end{enumerate}
\end{theorem}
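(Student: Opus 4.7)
The plan is to establish the three regional properties separately, exploiting the explicit formulas \eqref{eq:ggrad} and \eqref{eq:ghess} for $\nabla G$ and $[\nabla^2 G](D,D)$, together with restricted strong convexity/smoothness from Assumption~\ref{assum:frestrictedsc} and the orthogonality identity \eqref{eq:ut1}. Throughout, I would write $E := W - W^* R$ with $R = R(W,W^*)$, and split every estimate into an $f$-part and a $\rho$-part; the near-isotropy hypothesis $(b-a)/(a+b) \le \frac{1}{100}\sigma_r^{3/2}(X^*)/(\|X^*\|_F\|X^*\|^{1/2})$ is exactly what lets the $f$-part be treated as a small perturbation of the quadratic $\frac{a+b}{2}\|UV^\top - X^*\|_F^2$, reducing much of the argument to its ``well-conditioned'' limit where gradients and Hessians of $f$ behave like those of a scaled Frobenius squared distance.

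For $W \in \cR_1$ I would prove the regularity condition \eqref{eq:gregular} by expanding $\langle \nabla G(W), E\rangle$ using \eqref{eq:ggrad}. The $f$-contribution $\langle \nabla f(X), SV^\top + UY^\top\rangle$ (with $[S;Y]=E$) is bounded below by applying restricted strong convexity along the segment from $X^*$ to $X=UV^\top$, which yields a term controlling $\|X-X^*\|_F^2$; a Procrustes-type inequality then converts this into a multiple of $\sigma_r(X^*) \mathrm{dist}^2(W,W^*)$. The $\rho$-contribution $\frac{1}{2}\langle \hat W\hat W^\top W, E\rangle$ is handled using \eqref{eq:ut1} to write $\hat W^\top W - R^\top(\hat W^*)^\top W^* R = \hat W^\top W$ and then bounded in terms of $\mathrm{dist}^2(W,W^*)$. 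Once the $\mathrm{dist}^2$ term is secured, the $\|\nabla G(W)\|_F^2$ term on the right of \eqref{eq:gregular} is produced by reserving a small fraction of the $\mathrm{dist}^2$ bound and applying the smoothness-type upper bound $\|\nabla G(W)\|_F \le C\|X^*\|^{1/2}\,\mathrm{dist}(W,W^*)$; trading the two via Young's inequality gives the $\cbeta/\|X^*\|$ factor. This is the hardest step, since it requires simultaneously tracking the restricted strong convexity margin, the regularization cross terms, and the tightness of the Procrustes bound, all while staying inside $\cR_1 \subset B(W^*,\sigma_r^{1/2}(X^*))$.

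For $W \in \cR_2$ the goal is to exhibit a matrix $D$ with $[\nabla^2 G(W)](D,D) \le -\cgamma\sigma_r(X^*)\|D\|_F^2$. The natural choice is $D = \hat W z z^\top$ for $z$ a unit right-singular vector of $W$ corresponding to $\sigma_r(W)$, so that $D$ is ``aligned with the shrinking direction'' of $W$. Plugging this into \eqref{eq:ghess}, the regularization contribution $\frac{1}{2}\langle \hat W^\top W, \hat D^\top D\rangle + \frac{1}{4}\|\hat W^\top D+D^\top \hat W\|_F^2$ becomes strictly negative because $\sigma_r(W)^2 \le \frac{1}{2}\sigma_r(X^*)$ makes the ``stabilizing'' square term small relative to the indefinite linear term. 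The $f$-contribution is controlled by the RIP-like bounds together with the constraint $\|WW^\top\|_F \le \frac{20}{19}\|W^*(W^*)^\top\|_F$, and the near-isotropy of $a,b$ ensures that this contribution cannot overwhelm the negative term from $\rho$.

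For $W \in \cR_3$ I would treat the three sub-regions separately, using \eqref{eq:ggrad} to lower bound $\|\nabla G(W)\|_F$. On $\cR_3'$ the iterate has $\sigma_r(W)$ bounded below and $\|WW^\top\|_F$ bounded above, so the dominant term comes from $f$: using restricted strong convexity plus the fact that $\mathrm{dist}(W,W^*) > \sigma_r^{1/2}(X^*)$, the gradient inherits a lower bound of order $\sigma_r^{3/2}(X^*)$. On $\cR_3''$ and $\cR_3'''$ the matrix $W$ is large, and the regularization term $\frac{1}{2}\hat W\hat W^\top W$ dominates $\nabla G$, producing gradient bounds scaling like $\|W\|^3$ and $\|WW^\top\|_F^{3/2}$ respectively, exactly as in \eqref{eq:glargegrad2} and \eqref{eq:glargegrad3}. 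The constants $\calpha,\cbeta,\cgamma,\cepsilon$ produced at each step depend only on absolute numerics and on the gap $(b-a)/(a+b)$; taking the smallest of the constants obtained in the three arguments completes the proof. Since this is precisely the content of \cite[Theorem~1]{ZZhu_QLi_GTang_MBWakin_2017}, one could also simply cite that result, but the sketch above is what I would reproduce in our setting.
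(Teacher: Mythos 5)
The paper itself offers no proof of this theorem: it is imported verbatim (in slightly abbreviated form) from \cite[Theorem~1]{ZZhu_QLi_GTang_MBWakin_2017}, so your closing sentence --- that one could simply cite that result --- is exactly what the paper does. Judged as a standalone argument, your outline for $\cR_1$ and $\cR_3$ is a plausible (if entirely unquantified) summary of the strategy in the reference: split $\nabla G$ into the $f$-part and the $\rho$-part, use restricted strong convexity plus Procrustes-type inequalities near $W^*$, and let the regularizer dominate when $\|W\|$ is large.

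The $\cR_2$ step, however, contains a genuine error. With $D=\hat W zz^\top$ and $z$ the right singular vector of $W$ for $\sigma_r(W)$, one has $S V^\top + U Y^\top = Uzz^\top V^\top - Uzz^\top V^\top = 0$, $\hat W^\top \hat W = W^\top W$, and hence $\hat W^\top D + D^\top\hat W = 2\sigma_r(W)^2 zz^\top$ and $\langle \hat W^\top W,\hat D^\top D\rangle = (z^\top W^\top \hat W z)^2$. The regularization contribution to \eqref{eq:ghess} is therefore $\tfrac12 (z^\top W^\top\hat W z)^2 + \sigma_r(W)^4 \ge 0$, i.e.\ \emph{nonnegative}, not ``strictly negative'' as you claim; the only possibly negative term left is $2\langle\nabla f(X), SY^\top\rangle$, whose sign is uncontrolled. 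Worse, $\|D\|_F = \|Wz\| = \sigma_r(W)$, which can vanish in $\cR_2$ (e.g.\ $W=0\in\cR_2$), so the proposed direction degenerates precisely at the rank-deficient saddle points the theorem is about. The negative curvature in $\cR_2$ must come from the $f$-term evaluated along a direction built from $W^*$ --- in the cited proof the test direction pairs the deficient singular direction $z$ of $W$ with (a column of) $W^*R$, roughly $D=(W^*R - W)zz^\top$, so that $[\nabla^2 f](SV^\top+UY^\top,\cdot)$ picks up the residual $X - X^*$ with a negative sign of magnitude proportional to $\sigma_r(X^*)$. Without such a construction the $\cR_2$ claim \eqref{eq:gnegcurve} does not follow.
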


It follows from this theorem that the function $G$ satisfies the robust strict saddle property of Definition~\ref{def:strictsaddle} with
\[
\alpha = \calpha \sigma_r(X^*),
\quad \gamma = \cgamma \sigma_r(X^*), \quad
\delta = \sigma_r^{1/2}(X^*), \quad
\beta = \cbeta \|X^*\|^{-1},
\]
and different values of $\epsilon$ that depend on the region:
\begin{align*}
  \epsilon_{R_3'} &= \cepsilon \sigma_r(X^*)^{3/2}, \\
  \epsilon_{R_3''} &=
\cepsilon \|W\|^3 \geq \cepsilon \left(\sqrt{2}\frac{20}{19}\right)^3 \|X^*\|^{3/2}, \\
\epsilon_{R_3'''} &= \cepsilon \|WW^\top\|_F^{3/2} \geq
\cepsilon \left(\frac{20}{19}\right)^{3/2} \|X^*\|_F^{3/2}.
\end{align*}

The regions defined in Theorem~\ref{thm:strictsaddle} span the space of matrices occupied by $W$ but are not a partition, that is,
\[
\cR_1 \cup \cR_2 \cup \cR_3' \cup \cR_3'' \cup \cR_3''' = \cR^{(n+m)\times r}.
\]
The constants $\calpha$, $\cbeta$, $\cgamma$, and $\cepsilon$ in this theorem may vary between problems in this class. Settings that work for all classes mentioned are
\[
\calpha = \frac{1}{16},
\quad \cbeta = \frac{1}{260},
\quad \cgamma = \frac16,
\quad \cepsilon = \frac{1}{50}.
\]
(For clarity, we use the same constant, $\cepsilon$, for each equation in (\ref{eq:glargegrad}) even though slightly tighter bounds are possible if each is treated individually.)
Note that these constants are used in the algorithm presented below. 
For convenience, we define the following combination of the parameters above, which is used repeatedly in the algorithms and analysis below:
\begin{equation}
    \label{eq:cfac}
    \cfac := \cepsilon \mincacb.
\end{equation}

In addition to the strict saddle assumption, we make the following standard assumptions on $G$, concerning compactness of the level set defined by the initial point $W^0$ and smoothness.
\begin{assumption} \label{assum:compactlevelset}
  Given an initial iterate $W^0$, the level set defined by $\mathcal{L}_G(W^0) = \{W | G(W) \le G(W^0)\}$ is compact.
\end{assumption}

\begin{assumption} \label{assum:GC22}
The function $G$ is twice Lipschitz continuously differentiable with respect to the Frobenius norm on an open neighborhood of $\mathcal{L}_G(W^0)$, and we denote by $L_g$ and $L_H$ the respective Lipschitz constants for $\nabla G$ and $\nabla^2 G$ on this set.
\end{assumption}


Under Assumptions~\ref{assum:compactlevelset} and \ref{assum:GC22},
there exist scalars $\Glow$, $U_g > 0$, $U_H > 0$, and
$R_{\mathcal{L}} > 0$ such that the following are satisfied for all
$W$ in an open neighborhood of $\mathcal{L}_G(W^0)$:
\begin{equation}\label{eq:boundscompact}
	G(W) \geq \Glow,\quad \|\nabla G(W)\|_F \leq U_g,
	\quad \|\nabla^2 G(W)\| \leq U_H, \quad \|W\| \leq R_{\mathcal{L}},
\end{equation}
where the third condition is taken on the ``unrolled" Hessian of $G$.
These assumptions also imply the following well known inequalities, for $W$ and $D$ such that all points in the convex hull of $W$ and $D$ lie in the neighborhood of the level set mentioned above:
\begin{subequations} \label{eq:Lgh}
\begin{align}
  \label{eq:Lg}
G(W+D) & \le G(W) + \langle \nabla G(W), D \rangle + \frac{L_g}{2} \|
D\|_F^2, \\
\label{eq:LH}
G(W+D) & \le G(W) + \langle \nabla G(W), D \rangle + \frac12 \langle D,
\nabla^2 G(W) D\rangle + \frac{L_H}{6} \| D\|_F^3.
\end{align}
\end{subequations}

Finally, we make an assumption about knowledge of the Lipschitz constant of the gradient of $f(X)$.
\begin{assumption} \label{assum:lipknown}
The gradient $\nabla f(X)$ is Lipschitz continuous on an open neighborhood of
\[
\left\{Z : Z = UV^\top, \left[\begin{matrix} U\\V\end{matrix}\right]
\in \mathcal{L}_G(W^0) \right\},
\]
and the associated constant, denoted by $L_{\nabla f}$, is known or can be efficiently estimated. 
That is, for  any $X_a$, $X_b$ in the set defined above, we have
\begin{equation} \label{eq:flipgrad}
\|\nabla f(X_a) - \nabla f(X_b)\|_F \leq L_{\nabla f} \|X_a - X_b\|_F.
\end{equation}
\end{assumption}

In many interesting applications, $L_{\nabla f}$ is easily discerned or can be efficiently computed. 
An example is the low-rank matrix completion problem where a set of observations $M_{ij}$, $(i,j) \in \Omega$ is made of a matrix, and the objective is $f(X) = \frac12 \sum_{(i,j) \in \Omega} \, (X_{ij}-M_{ij})^2$. Here, we have  $L_{\nabla f} = 1$. 
A similar example is matrix sensing problem, in which $f(X) = \frac12 \|\mathcal{A}(X) - y \|^2_2$, where $\mathcal{A} : \R^{n \times m} \rightarrow \R^p$ is a known linear measurement operator and $y \in \R^p$ is the set of observations. 
In this case, we can write $\mathcal{A}(X) = [\langle A_i, X\rangle]_{i=1,2,\dotsc,p}$ where
$A_i \in \R^{n \times m}$ for all $i = 1,2,\dotsc, p$, so that $\nabla f(X) = \sum_{i=1}^p (\langle A_i, X \rangle -y_i) A_i$ and thus $L_{\nabla f} = \sum_{i=1}^p  \|A_i\|^2_F$.

\section{The Algorithm.} \label{sec:alg}

We describe our algorithm in this section. 
Sections~\ref{sec:algdescription} and \ref{subsec:meo} give a detailed description of each element of the algorithm, along with a description of how the key parameters in the definition of strict saddle are estimated.
Section~\ref{sec:algcorrectness} shows that the algorithm properly identifies the strict saddle regions once the parameter $\gamma_k$ is a sufficiently good estimate of $\sigma_r(X^*)$.

\begin{algorithm}[ht!]
\caption{Line-Search Algorithm For Strict Saddle Functions}
\label{alg:stupid}
\begin{algorithmic}
\STATE \emph{Inputs:} Optimality tolerances  $\epsg \in (0, 1)$,
$\epsH \in (0, 1)$; starting point $W^0$;
starting guess $\gamma_0  \geq \sigma_r(X^*) > 0$;
step acceptance parameter $\eta \in (0,1)$;
backtracking parameter $\theta \in (0,1)$;
Lipschitz constant $L_{\nabla f} \geq 0$;
\STATE \emph{Optional Inputs:} Scalar $M > 0$ such that $\|\nabla^2 G(W)\| \leq M$
for all $W$ in an open neighborhood of $\mathcal{L}_G(W^0)$;
\STATE converged $\leftarrow$ False;
\FOR{$k=0,1,2,\dotsc$}
\IF[Large Gradient, Take Steepest Descent Step]{$\|\nabla G(W^k)\|_F \geq \cfac \gamma_k^{3/2}$} 
\STATE Compute $\nu_k=\theta^{j_k}$, 
where $j_k$ is the smallest nonnegative integer such that \vspace{-0.25cm}
\begin{equation} \label{eq:lsdecreasegrad}
G(W^k - \nu_k \nabla G(W^k)) < G(W^k) - \eta \nu_k \|\nabla G(W^k)\|_F^2;
\end{equation} \vspace{-0.5cm}
\STATE $W^{k+1} \leftarrow W^k - \nu_k \nabla G(W^k)$;
\STATE $\gamma_{k+1} \leftarrow \gamma_k$;
\ELSE[Seek Direction of Large Negative Curvature]
\STATE Call Procedure~\ref{alg:meo} with $H = \nabla^2 G(W^k)$,
$\epsilon = \cgamma \gamma_k$, and $M$ (if provided);
\IF[Initialize Local Phase]{Procedure~\ref{alg:meo} certifies $\lambdamin(\nabla^2 G(W^k)) \geq -\cgamma \gamma_k$}
\STATE $\alpha_k \leftarrow \calpha \gamma_k$;
\STATE $\delta_k \leftarrow \sqrt{2} \gamma_k^{1/2}$;
\STATE $\beta_k \leftarrow \frac{2 \cbeta}{(\delta_k + \|W^k\|_F)^2}$;
\IF[Try Local Phase]{$\alpha_k \beta_k \leq \frac14$ \textbf{and} $\|\nabla G(W^k)\|_F \leq \frac{\delta_k}{\beta_k}$ \textbf{and}
\\ \quad$2\|\nabla f(X^k)\|_F + \muhalf \|(\hat{W}^k)^\top W^k\|_F
\leq (2L_{\nabla f} + \muhalf)(2\|W^k\|_F + \delta_k) \delta_k$}  
\STATE Call Algorithm~\ref{alg:local} with $W^k_0 = W^k$, $\epsg$, $\epsH$,
$\alpha_k$, $\beta_k$, $\delta_k$, $\eta$, $\theta$, and $L_{\nabla f}$ to obtain outputs $W^{k+1}$, $T_k$, converged;
\IF[Local Phase Found Near-Optimal Point]{converged $=$ True}
\STATE Terminate and return $W^{k+1}$;
\ENDIF
\ELSE[Do not update $W$]
\STATE $W^{k+1} \leftarrow  W^k$;
\ENDIF
\STATE $\gamma_{k+1} \leftarrow \tfrac12 \gamma_k$;
\ELSE[Search in Large Negative Curvature Direction $s$ from Procedure~\ref{alg:meo}]
\STATE Set $D^k \leftarrow -\mathrm{sgn}\left(\langle S, \nabla G(W^k)\rangle\right) |\langle S, \nabla^2 G(W^k) S\rangle | S$,
where $S$ is the $\R^{(n+m)\times r}$ matrix formed by reshaping the output vector $s$ from Procedure~\ref{alg:meo};
\STATE Compute $\nu_k=\theta^{j_k}$, 
where $j_k$ is the smallest nonnegative integer such that \vspace{-0.25cm}
\begin{equation} \label{eq:lsdecreasenc}
G(W^k + \nu_k D^k) < G(W^k) +
\eta \frac{\nu_k^2}{2} \langle D^k, \nabla^2 G(W^k) D^k \rangle;
\end{equation} \vspace{-0.5cm}
\STATE $W^{k+1} \leftarrow W^k + \nu_k D^k$;
\STATE $\gamma_{k+1} \leftarrow \gamma_k$;
\ENDIF
\ENDIF
\ENDFOR
\end{algorithmic}
\end{algorithm}

\begin{algorithm}
\begin{algorithmic}
\caption{Local Phase for Strict Saddle Problems}
\label{alg:local}
\STATE \emph{Inputs:} Optimality tolerances $\epsg \in (0,1]$, $\epsH \in (0,1]$;
    starting point $W^k_0$; 
    strict saddle parameters $\alpha_k$, $\beta_k$, $\delta_k > 0$;
    step acceptance parameter $\eta \in (0, 1)$;
    backtracking parameter $\theta \in (0,1)$;
    Lipschitz constant $L_{\nabla f} \geq 0$;
\STATE \emph{Outputs:} $W^{k+1}$, $T_k$, converged;
\STATE converged $\leftarrow$ False;
\STATE $\kappa_0 \leftarrow 1$;
\STATE $\tau_0 \leftarrow (2L_{\nabla f} + \muhalf)(2\|W^k_0\|_F + \delta_k) \delta_k$;
\STATE $t \leftarrow 0$;
\WHILE{$\|\nabla G(W^k_t)\|_F \leq \frac{\sqrt{\kappa_t}}{\beta_k}\delta_k$
\textbf{and} $2\|\nabla f(X^k_t)\|_F + \muhalf \|(\hat{W}^k_t)^\top W^k_t\|_F \leq \tau_t$}
\STATE Compute $\nu_t=2\beta_k\theta^{j_t}$, 
where $j_t$ is the smallest nonnegative integer such that \vspace{-0.25cm}
\begin{equation} \label{eq:lsdecreasegradlocal}
G(W^k_t - \nu_t \nabla G(W^k_t)) < G(W^k_t) - \eta \nu_t \|\nabla G(W^k_t)\|_F^2;
\end{equation} \vspace{-0.5cm}
\STATE $W^{k}_{t+1} \leftarrow W^k_t - \nu_t \nabla G(W^k_t)$;
\STATE $\kappa_{t+1} \leftarrow (1-2\nu_t \alpha_k)\kappa_t$;
\STATE $\tau_{t+1} \leftarrow (2L_{\nabla f} + \muhalf)(2\|W^k_{t+1} \|_F + \sqrt{\kappa_{t+1}} \delta_k)\sqrt{\kappa_{t+1}}\delta_k$;
\STATE $t \leftarrow t + 1$;
\IF{$\|\nabla G(W^k_t)\| \leq \epsg$ \textbf{and}
$2\|\nabla f(X^k_t)\|_F + \muhalf \|(\hat{W}^k_t)^\top W^k_t\|_F \leq \epsH$}
\STATE converged $\leftarrow$ True;
\STATE \textbf{break}
\ENDIF
\ENDWHILE
\STATE $W^{k+1} \leftarrow W^k_t$;
\STATE $T_k \leftarrow t$;
\end{algorithmic}
\end{algorithm}

\subsection{Line-Search Algorithm for Strict Saddle Functions.}
\label{sec:algdescription}

Our main algorithm is defined in Algorithm~\ref{alg:stupid}. 
At each iteration, it attempts to identify the region curently occupied by $W^k$. 
A step appropriate to the region is computed.
The critical parameter in identifying the regions is $\gamma_k$, which is our upper estimate of the parameter $\gamma = \sigma_r(X^*)$ (ignoring the constant $\cgamma$), which plays a key role in the definitions of the regions in Theorem~\ref{thm:strictsaddle}. 
When the large gradient condition is satisfied (that is, when $W^k$ is estimated to lie in $\cR_2$), a gradient descent step is taken. 
Similarly, when the condition for large negative curvature is satisfied (that is, when $W^k$ is estimated to lie in $\cR_1$), a direction of significant negative curvature is found by using Procedure~\ref{alg:meo}, and a step is taken in a scaled version of this direction. 
(The approach for negative curvature steps is similar to that of \cite{CWRoyer_MONeill_SJWright_2019}.) 
In both cases, a backtracking line search is used to ensure sufficient decrease in $G$.

When neither of these two scenarios are satisfied, the algorithm enters a ``local phase'' defined by Algorithm~\ref{alg:local}. 
This process begins by estimating a number of the robust strict saddle parameters of Definition~\ref{def:strictsaddle}. 
These parameters are chosen so that the local phase will converge linearly to an approximate second-order point {\em provided that} $\gamma_k$ is within a factor of two of the value of $\sigma_r(X^*)$, that is,
\begin{equation} \label{eq:gammastardef}
  \gamma_k \in \Gamma(X^*) := \left[\tfrac12 \sigma_r(X^*), \sigma_r(X^*)\right).
\end{equation}
When $\gamma_k$ is in the interval $\Gamma(X^*)$, the value of $\delta_k$ defined in Algorithm~\ref{alg:stupid} is an upper bound on $\delta$, while $\alpha_k$ and $\beta_k$ are lower bounds on $\alpha$ and $\beta$ from Definition~\ref{def:strictsaddle}. 
Conditions are checked during the execution of the local phase to monitor for fast convergence.
These conditions will be satisfied whenever $\gamma_k \in \Gamma(X^*)$.
If the conditions are not satisfied, then \eqref{eq:gammastardef} does not hold, so we halve the value of $\gamma_k$ and proceed without taking a step in $W$.

The local phase, Algorithm~\ref{alg:local}, begins by initializing an inner iteration counter $t$ as well as the scalar quantities $\kappa_t$ and $\tau_t$, which are used to check for linear convergence of $W^k_t$, for $t=0,1,2,\dotsc$ to a point satisfying \ref{eq:optconds}. 
Each iteration of the local phase consists of a gradient descent step with a line search parameter $\nu_t$ obtained by backtracking from an initial value of $2 \beta_k$.
Once a stepsize $\nu_t$ is identified and the gradient descent step is taken, $\kappa_t$ is updated to reflect the linear convergence rate that occurs when $\gamma_k \in \Gamma(X^*)$ and $W^k \in \cR_1$. 
At each iteration, this linear convergence rate is checked, by looking at the gradient of $G(W)$ as well as the the gradient of the original function $f(X)$.
Under the assumptions discussed in Section \ref{subsec:assumssparams}, these quantities provide estimates for \eqref{eq:optconds}, since the minimum eigenvalue of the Hessian of $G(W)$ can be lower bounded using $\nabla f(X)$ (see Section~\ref{subsec:lb} for details). 
These checks ensure that the local phase either converges at a linear rate to a point satisfying \eqref{eq:optconds}, or else exits quickly with a flag indicating that the current estimate $\gamma_k$ of $\sigma_r(X^*)$ is too large.

\subsection{Minimum Eigenvalue Oracle.} \label{subsec:meo}

\floatname{algorithm}{Procedure}

\begin{algorithm}[ht!]
\caption{Minimum Eigenvalue Oracle}
\label{alg:meo}
\begin{algorithmic}
  \STATE \emph{Inputs:} Symmetric matrix $H \in \R^{N \times N}$,
  tolerance $\epsilon>0$;
  \STATE \emph{Optional input:} Scalar $M>0$ such that $\| H \| \le M$;
  \STATE \emph{Outputs:} An estimate $\lambda$ of $\lambdamin(H)$ such that $\lambda \le - \epsilon/2$ and vector $s$ with $\|s\|=1$ such that $s^\top H s =\lambda$ OR a certificate that $\lambdamin(H) \ge -\epsilon$. In the latter case,  the certificate is false with probability $\rho$ for some $\rho \in [0,1)$.
\end{algorithmic}
\end{algorithm}

The Minimum Eigenvalue Oracle (Procedure~\ref{alg:meo}) is called when the large gradient condition $\|\nabla G(W^k)\|_F \geq \cfac \gamma_k^{3/2}$ does not hold. 
The input matrix $H$ is the ``unrolled'' Hessian of $G(W)$, a symmetric matrix of dimension $N=(n+m)r$.  
The oracle either returns a direction along which the Hessian has curvature at most $-\epsilon/2$, or certifies that the minimum curvature is greater than $-\epsilon$. 
In the latter case, the certificate may be wrong with some probability $\rho \in (0,1)$, where $\rho$ is a user-specified parameter. 
When the certificate is returned, Algorithm~\ref{alg:stupid} enters the local phase.

Procedure~\ref{alg:meo} can be implemented via any method that finds the smallest eigenvalue of $H$ to an absolute precision of $\epsilon/2$ with probability at least $1-\rho$. 
(A deterministic implementation based on a full eigenvalue decomposition would have $\rho=0$.)
Several possibilities for implementing Procedure~\ref{alg:meo} have been proposed in the literature, with various guarantees.
In our setting, in which Hessian-vector products and vector operations are the fundamental operations, Procedure~\ref{alg:meo} can be implemented using the Lanczos method with a random starting vector (see \cite{YCarmon_JCDuchi_OHinder_ASidford_2018}). This approach does not require explicit knowledge of $H$, only the ability to find matrix-vector products of $H$ with a given vector. The following result from \cite[Lemma~2]{CWRoyer_MONeill_SJWright_2019} verifies the effectiveness of this approach.
\begin{lemma} \label{lem:randlanczos}
Suppose that the Lanczos method is used to estimate the smallest eigenvalue of $H$ starting with a random vector uniformly generated on the unit sphere, where $\|H\| \le M$. For any $\rho \in [0,1)$, this approach finds the smallest eigenvalue of $H$ to an absolute precision of $\eps/2$, together with a corresponding direction $s$, in at most
\begin{equation} \label{eq:rlanc}
	\min \left\{N, 1+\ceil*{\frac12 \ln (2.75 N/\rho^2)
	\sqrt{\frac{M}{\epsilon}}} \right\} \quad \mbox{iterations},
\end{equation}	
with probability at least $1-\rho$.
\end{lemma}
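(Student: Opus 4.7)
The plan is to invoke the classical convergence theory for the randomized Lanczos method in the form due to Kuczy\'nski and Wo\'zniakowski, adapted from the largest-eigenvalue setting to estimating the smallest eigenvalue of $H$. First I would reduce to the standard setting by considering the shifted matrix $\tilde{H} := M I - H$, which is positive semidefinite with $\|\tilde{H}\| \le 2M$ and shares its eigenvectors with $H$. The smallest eigenvalue of $H$ corresponds to the largest eigenvalue of $\tilde{H}$, and since Krylov subspaces depend only on the starting vector and shifts of $H$ commute with polynomial actions, the Lanczos iterates for $H$ and $\tilde{H}$ coincide. Hence it suffices to bound the number of iterations needed to estimate $\lambda_{\max}(\tilde H)$ to absolute precision $\epsilon/2$ with probability at least $1-\rho$.

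Second, I would invoke the standard polynomial characterization of the Ritz value $\lambda_k$ at iteration $k$: writing the random starting vector as $v = \sum_i \xi_i u_i$ in the eigenbasis of $\tilde H$ (with eigenvalues $\tilde\lambda_1 \ge \tilde\lambda_2 \ge \cdots$ and $\tilde\lambda_1 = \lambda_{\max}(\tilde H)$), one has for any polynomial $p$ of degree $\le k-1$ that
\[
\lambda_{\max}(\tilde H) - \lambda_k \;\le\; \frac{\sum_{i \ge 2} (\tilde\lambda_1 - \tilde\lambda_i)\, p(\tilde\lambda_i)^2 \xi_i^2}{\xi_1^2\, p(\tilde\lambda_1)^2 + \sum_{i \ge 2} p(\tilde\lambda_i)^2 \xi_i^2}.
\]
Selecting $p$ as the Chebyshev polynomial $T_{k-1}$ of the first kind, shifted and scaled so that $|p| \le 1$ on $[0, \tilde\lambda_1 - \epsilon/2]$ and $p(\tilde\lambda_1)$ is as large as possible, I would obtain the clean bound
\[
\lambda_{\max}(\tilde H) - \lambda_k \;\le\; \frac{2M}{\xi_1^2\, T_{k-1}(1 + \epsilon/(2M))^2}.
\]

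Third, for $v$ drawn uniformly from the unit sphere of $\R^N$, the squared coefficient $\xi_1^2 = \langle v, u_1\rangle^2$ follows a Beta$(1/2,(N-1)/2)$ distribution, so a standard lower-tail estimate gives $\Pr(\xi_1^2 \ge c\rho^2/N) \ge 1-\rho$ for a small absolute constant $c$. Combining this with the sharp Chebyshev estimate $T_{k-1}(1+2x) \ge \tfrac12 \exp\bigl(2(k-1)\sqrt{x}\bigr)$ (valid for $x > 0$) and imposing $\lambda_{\max}(\tilde H) - \lambda_k \le \epsilon/2$ yields, after solving the resulting inequality for $k$, the iteration count
\[
k \;\le\; 1 + \ceil*{\tfrac12 \ln(2.75\,N/\rho^2)\sqrt{M/\epsilon}},
\]
with the $2.75$ arising from carefully tracking the constants in the Beta tail bound together with the factor $2M$. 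Once $\lambda_k \ge \lambda_{\max}(\tilde H) - \epsilon/2$, translating back to $H$ gives an estimate of $\lambdamin(H)$ within $\epsilon/2$, and the associated Ritz vector $s$ provides the required direction.

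Finally, the $\min\{N,\,\cdot\}$ in \eqref{eq:rlanc} reflects the fact that in exact arithmetic the Krylov subspace grows by one dimension per iteration; after at most $N$ steps it spans all of $\R^N$, so the Ritz values coincide with the eigenvalues of $H$ exactly and the algorithm returns $\lambdamin(H)$ and a true eigenvector. The main obstacle I anticipate is pinning down the explicit constant $2.75$: this requires simultaneously tracking the Beta-distribution tail bound on $\xi_1^2$, the exponential lower bound on $T_{k-1}$, and the factor $2M$ in the numerator above, and absorbing lower-order terms in the ceiling function. The polynomial-approximation and probability components are individually standard; it is the sharp combination of constants that carries the work.
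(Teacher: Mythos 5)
The paper does not actually prove this lemma: it is imported verbatim from Royer--O'Neill--Wright (Lemma~2 of \cite{CWRoyer_MONeill_SJWright_2019}), whose proof in turn rests on the randomized-Lanczos tail bound of Kuczy\'nski and Wo\'zniakowski. Your outline reconstructs exactly the argument underlying that chain of citations --- the shift $\tilde H = MI - H$ with $\|\tilde H\| \le 2M$, the min-max/Ritz polynomial bound, the Chebyshev choice, and the Beta$(1/2,(N-1)/2)$ law of $\xi_1^2$ --- so in approach you are on target, and arguably more self-contained than the paper.

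One quantitative caveat is worth flagging, because it affects whether you can recover the bound \eqref{eq:rlanc} \emph{as stated}. Your plan conditions on the event $\xi_1^2 \ge c\rho^2/N$ (probability $\ge 1-\rho$) and then substitutes this lower bound into the deterministic estimate $\lambda_{\max}(\tilde H)-\lambda_k \le 2M/\bigl(\xi_1^2\,T_{k-1}(1+\epsilon/(2M))^2\bigr)$. Forcing the right-hand side below $\epsilon/2$ then requires $e^{2(k-1)\sqrt{\epsilon/M}} \gtrsim MN/(c\rho^2\epsilon)$, i.e.\ the logarithm acquires an extra additive $\ln(M/\epsilon)$ term that is absent from $\tfrac12\ln(2.75N/\rho^2)\sqrt{M/\epsilon}$. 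The stated constant comes instead from the \emph{integrated} form of the argument (Theorem~4.2 of Kuczy\'nski--Wo\'zniakowski): one bounds the failure probability directly by $1.648\sqrt{N}\,e^{-(2k-1)\sqrt{\bar\epsilon}}$ with relative tolerance $\bar\epsilon = \epsilon/(4M)$, integrating the Chebyshev bound against the density of $\xi_1^2$ rather than union-bounding; setting this below $\rho$ and squaring gives $2.75 \ge 1.648^2$ with no $\epsilon$ or $M$ inside the logarithm. So the ``obstacle'' you identify is not merely constant-chasing: the two-step conditioning argument provably loses a $\ln(M/\epsilon)$ factor, and you need the sharper one-shot tail estimate to land on \eqref{eq:rlanc} exactly. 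Since the extra term only enters logarithmically, this does not affect how the lemma is used elsewhere in the paper, but it does mean your proposal as written establishes a slightly weaker statement than the one claimed.
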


Procedure~\ref{alg:meo} can be implemented by outputting the approximate eigenvalue $\lambda$ for $H$, determined by the randomized Lanczos process, along with the corresponding direction $s$, provided that $\lambda \le -\eps/2$. 
When $\lambda>-\eps/2$, Procedure~\ref{alg:meo} returns the certificate that $\lambdamin(H) \ge -\eps$, which is correct with probability at least $1-\rho$.

We note here that while the second-order optimality conditions could be checked using the minimum eigenvalue oracle inside of the local phase, this procedure can  be quite inefficient compared to the rest of the algorithm. 
From the result of Lemma \ref{lem:randlanczos}, it is clear that attempting to verify that $\lambdamin(\nabla^2 G(W)) \geq -\epsH$ holds could require as many as $\min\left\{(n+m)r,\epsH^{-1/2}\right\}$ gradient evaluations/Hessian-vector products. 
This dependence on $\epsH$ --- worse than the logarithmic dependence on tolerances that is the stated goal of this work. 
We avoid this issue by using $\nabla f(X)$ to estimate a lower bound of the spectrum of $\nabla^2 G(W)$, as discussed in the following section. 
This allows us to maintain the logarithmic dependence on our optimization tolerances $\epsg$ and $\epsH$ while still ensuring convergence to an approximate second-order point.

\subsection{Lower-Bounding the Spectrum of $\nabla^2 G(W^k)$.} 

\label{subsec:lb}

We now prove two technical results about quantities that lower-bound the minimum eigenvalue of Hessian of $G(W)$.
These bounds motivate some unusual expressions in Algorithms~\ref{alg:stupid} and \ref{alg:local} that allow us to  check the second-order approximate optimality condition in \eqref{eq:optconds} {\em indirectly}, and ultimately at lower cost than a direct check of $\lambdamin(\nabla^2 G(W))$.


\begin{lemma} \label{lem:nclowerbound}
  Suppose that Assumption~\ref{assum:frestrictedsc} holds, and let $W$ and $\hat{W}$ be defined in \eqref{eq:W} and \eqref{eq:hatW}, respectively, with $G(W)$ defined in \eqref{eq:gdef}. Then we have
\[
\lambdamin(\nabla^2 G(W)) \geq -2\|\nabla f(X)\|_F
- \muhalf \|\hat{W}^\top W\|_F,
\]
where $X = UV^\top$.
\end{lemma}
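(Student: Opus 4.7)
My plan is to work directly from the bilinear form of the Hessian in \eqref{eq:ghess}, bounding each of its four terms individually and then combining. Specifically, using the definition of $\lambdamin$ from \eqref{eq:lambdamaxmin}, it suffices to show for an arbitrary matrix $D = \left[ \begin{matrix} S \\ Y \end{matrix} \right]$ that
\[
[\nabla^2 G(W)](D,D) \;\geq\; -\bigl(2\|\nabla f(X)\|_F + \muhalf\|\hat{W}^\top W\|_F\bigr)\|D\|_F^2.
\]

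First, I will dispose of the two clearly nonnegative terms. The term $\tfrac14\|\hat{W}^\top D + D^\top \hat{W}\|_F^2$ is obviously nonnegative. For the term $[\nabla^2 f(X)](SV^\top + UY^\top, SV^\top + UY^\top)$, I will observe that $X = UV^\top$ has rank at most $r$ and that $SV^\top + UY^\top$ has rank at most $2r$ (as the sum of two rank-$r$ matrices); hence by Assumption~\ref{assum:frestrictedsc}, this term is at least $a\|SV^\top + UY^\top\|_F^2 \ge 0$.

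Next, I will handle the two remaining (sign-indefinite) terms by Cauchy--Schwarz combined with submultiplicativity of the Frobenius norm, $\|AB\|_F \le \|A\|_F\|B\|_F$. For the cross term involving $\nabla f(X)$, I get
\[
\bigl|2\langle \nabla f(X), SY^\top\rangle\bigr| \leq 2\|\nabla f(X)\|_F\|SY^\top\|_F \leq 2\|\nabla f(X)\|_F\|S\|_F\|Y\|_F \leq 2\|\nabla f(X)\|_F\|D\|_F^2,
\]
where the last step uses $\|S\|_F,\|Y\|_F \le \|D\|_F$. For the regularization term, I will similarly bound
\[
\bigl|\muhalf\langle \hat{W}^\top W,\, \hat{D}^\top D\rangle\bigr| \leq \muhalf\|\hat{W}^\top W\|_F\|\hat{D}^\top D\|_F \leq \muhalf\|\hat{W}^\top W\|_F\|\hat{D}\|_F\|D\|_F = \muhalf\|\hat{W}^\top W\|_F\|D\|_F^2,
\]
using that $\|\hat{D}\|_F = \|D\|_F$ since negating rows leaves the Frobenius norm invariant. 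Combining the four pieces and dividing by $\|D\|_F^2$ gives the claimed bound on $\lambdamin(\nabla^2 G(W))$.

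There is no real obstacle here; the argument is a routine application of Cauchy--Schwarz and the rank-2r structure needed to invoke the restricted strong convexity assumption. The only subtlety worth flagging is confirming that the ranks line up so that Assumption~\ref{assum:frestrictedsc} can be applied to the $[\nabla^2 f(X)]$ term, and recognizing that the last squared term in \eqref{eq:ghess} can simply be dropped rather than used to further tighten the bound.
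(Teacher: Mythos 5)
Your proposal is correct and follows essentially the same route as the paper's proof: drop the two nonnegative terms (using the rank conditions to invoke Assumption~\ref{assum:frestrictedsc}), then bound the two indefinite terms by Cauchy--Schwarz and $\|S\|_F,\|Y\|_F\le\|D\|_F$, $\|\hat{D}\|_F=\|D\|_F$. The only cosmetic difference is that the paper phrases the final step via a minimizing direction $D'$ rather than an arbitrary $D$, which changes nothing.
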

\begin{proof}
Let $D$ be defined as in \eqref{eq:def.D}, with component matrices $S$ and $Y$. 
Since $\rank(X) = \rank(UV^\top) \leq r$ and $\rank(SV^\top+UY^\top) \leq 2r$, we have by Assumption~\ref{assum:frestrictedsc} that
\[
\langle SV^\top+UY^\top, \nabla^2 f(X) (SV^\top +
UY^\top)\rangle \geq a \|D\|^2_F \geq 0.
\]
It follows from \eqref{eq:ghess} and the Cauchy-Schwarz inequality that
\begin{align*}
\langle D, \nabla^2 G(W) D \rangle &= \langle SV^\top+UY^\top,
\nabla^2 f(X) (SV^\top + UY^\top)\rangle + 2\langle\nabla f(X), S Y^\top\rangle \\
& \quad\quad + \muhalf \langle \hat{W}^\top W, \hat{D} D\rangle
+ \frac{1}{4} \|\hat{W}^\top D + D^\top \hat{W}\|_F^2 \\
&\geq -2 \|\nabla f(X)\|_F \|S\|_F \|Y\|_F
- \muhalf \|\hat{W}^\top W\|_F \|\hat{D}\|_F \|D\|_F.
\end{align*}
Defining $D'$ and $\hat{D}'$ by
\[
D' = \left[ \begin{matrix} S' \\ Y'  \end{matrix} \right] \in
\arg\min_D \frac{\langle D, \nabla^2 G(W) D\rangle}{\|D\|^2_F}, \quad
\hat{D}' = \left[ \begin{matrix} S' \\ -Y'  \end{matrix} \right]
\]
we have
\begin{align*}
\lambdamin(\nabla^2 G(W))
&\geq -2 \frac{\|\nabla f(X)\|_F \|S'\|_F \|Y'\|_F}{\|D'\|^2_F}
- \muhalf \frac{\|\hat{W}^\top W\|_F \|\hat{D}'\|_F \|D'\|_F}{\|D'\|^2_F} \\
&\geq -2 \|\nabla f(X)\|_F -\muhalf \|\hat{W}^\top W\|_F,
\end{align*}
where the final inequality follows by $\|S'\|_F \leq \|D'\|_F$, $\|Y'\|_F \leq \|D'\|_F$, and $\|\hat{D}\|_F = \|\hat{D}'\|_F$.
\end{proof}

Next, we show how to relate the lower bound of Lemma~\ref{lem:nclowerbound} to the distance between $W$ and $W^*$. The following result has an expression that is similar to one that appears in Algorithm~\ref{alg:stupid}, in the condition that determines whether to call Algorithm~\ref{alg:local}.

\begin{lemma} \label{lem:ncbound}
Suppose that Assumptions~\ref{assum:xstar}, \ref{assum:frestrictedsc}, and \ref{assum:lipknown} hold. Let $W$ and $\hat{W}$ be as defined as in \eqref{eq:W} and \eqref{eq:hatW}, respectively, and  let $X^*$ be as in Assumption~\ref{assum:xstar}, with $U^*$ and $V^*$ (and hence $W^*$) defined as in \eqref{eq:UsVs}, for some $R \in \mathcal{O}_r$. Then we have
\[
2\|\nabla f(X)\|_F + \muhalf \|\hat{W}^\top W\|_F
\leq \left( 2L_{\nabla f} + \muhalf \right) \left( 2 \|W\|_F + \dist(W,W^*) \right) \dist(W,W^*).
\]
\end{lemma}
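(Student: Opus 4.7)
The plan is to reduce both $\|\nabla f(X)\|_F$ and $\|\hat W^\top W\|_F$ to expressions involving $\dist(W,W^*)$ and $\|W\|_F$ through well-chosen telescoping decompositions, and then to apply the triangle inequality together with orthogonal invariance of the Frobenius norm. To set up notation, let $R = R(W,W^*) \in \mathcal{O}_r$ achieve the minimum in \eqref{eq:distdef}, and put $\tilde U^* := U^* R$, $\tilde V^* := V^* R$, so that $\tilde U^* \tilde V^{*\top} = U^*(V^*)^\top = X^*$. Define the residuals $S := U - \tilde U^*$ and $Y := V - \tilde V^*$; then $\|S\|_F^2 + \|Y\|_F^2 = \|W - W^*R\|_F^2 = \dist(W,W^*)^2$, and in particular $\|S\|_F, \|Y\|_F \le \dist(W,W^*)$. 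Also, orthogonal invariance and the triangle inequality give $\|\tilde U^*\|_F \le \|W^*\|_F = \|W^* R\|_F \le \|W\|_F + \dist(W,W^*)$, and analogously for $\tilde V^*$.

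The first step handles the gradient term. Since $X^*$ is a critical point (Assumption~\ref{assum:xstar}), $\nabla f(X^*) = 0$, so by Assumption~\ref{assum:lipknown} we have $\|\nabla f(X)\|_F \le L_{\nabla f}\|UV^\top - X^*\|_F$. I then use the telescoping identity
\[
UV^\top - X^* = UV^\top - \tilde U^* \tilde V^{*\top} = S V^\top + \tilde U^* Y^\top,
\]
apply the triangle inequality and the bound $\|AB^\top\|_F \le \|A\|_F\|B\|_F$, and use $\|V\|_F \le \|W\|_F$ together with $\|\tilde U^*\|_F \le \|W\|_F + \dist(W,W^*)$, which gives $\|UV^\top - X^*\|_F \le \|S\|_F\|W\|_F + (\|W\|_F + \dist(W,W^*))\|Y\|_F \le \dist(W,W^*)\bigl(2\|W\|_F + \dist(W,W^*)\bigr)$. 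Hence $2\|\nabla f(X)\|_F \le 2L_{\nabla f}\,\dist(W,W^*)\bigl(2\|W\|_F + \dist(W,W^*)\bigr)$.

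The second step handles $\|\hat W^\top W\|_F = \|U^\top U - V^\top V\|_F$. The key observation is that, by orthogonal invariance applied to \eqref{eq:ut1}, $\tilde U^{*\top}\tilde U^* - \tilde V^{*\top}\tilde V^* = R^\top\bigl((U^*)^\top U^* - (V^*)^\top V^*\bigr)R = 0$. Consequently
\[
U^\top U - V^\top V = \bigl(U^\top U - \tilde U^{*\top}\tilde U^*\bigr) - \bigl(V^\top V - \tilde V^{*\top}\tilde V^*\bigr),
\]
and expanding via $U = \tilde U^* + S$ yields $U^\top U - \tilde U^{*\top}\tilde U^* = U^\top S + S^\top \tilde U^*$ (and likewise for $V$). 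Triangle inequality and the same operator/Frobenius bound give $\|U^\top U - \tilde U^{*\top}\tilde U^*\|_F \le \|S\|_F(\|U\|_F + \|\tilde U^*\|_F)$ and an analogous inequality for the $V$-term, so that after using $\|U\|_F,\|V\|_F \le \|W\|_F$, $\|\tilde U^*\|_F, \|\tilde V^*\|_F \le \|W\|_F + \dist(W,W^*)$, and $\|S\|_F + \|Y\|_F \le 2\dist(W,W^*)$ (or a Cauchy--Schwarz variant), I arrive at $\|\hat W^\top W\|_F \le \dist(W,W^*)\bigl(2\|W\|_F + \dist(W,W^*)\bigr)$.

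Adding the two estimates with their coefficients $2$ and $\muhalf$ produces $(2L_{\nabla f} + \muhalf)\,\dist(W,W^*)\bigl(2\|W\|_F + \dist(W,W^*)\bigr)$, which is exactly the claim. The main obstacle is purely algebraic: spotting the telescoping decomposition $UV^\top - X^* = SV^\top + \tilde U^* Y^\top$ (and its $U^\top U - V^\top V$ analogue), and then assigning the ``small'' factor $\|S\|_F$ or $\|Y\|_F$ to one side and $\|W\|_F$ versus $\|\tilde U^*\|_F$ to the other so that, after bounding $\|\tilde U^*\|_F$ via the triangle inequality, the coefficient of $\|W\|_F$ collects to exactly $2$ and the remainder is $\dist(W,W^*)$.
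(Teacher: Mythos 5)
Your overall strategy is the same as the paper's: use $\nabla f(X^*)=0$ plus Assumption~\ref{assum:lipknown} for the gradient term, and $(\hat{W}^*)^\top W^*=0$ plus a telescoping decomposition for the regularizer term. The gradient half of your argument is correct and delivers exactly the bound $2\|\nabla f(X)\|_F \le 2L_{\nabla f}\,\dist(W,W^*)\left(2\|W\|_F+\dist(W,W^*)\right)$.

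The regularizer half, however, does not yield the claimed constant as written. From your blockwise bounds you get $\|\hat W^\top W\|_F \le \left(\|S\|_F+\|Y\|_F\right)\left(2\|W\|_F+\dist(W,W^*)\right)$, and since only $\|S\|_F^2+\|Y\|_F^2=\dist(W,W^*)^2$ is available, the best you can say pointwise is $\|S\|_F+\|Y\|_F\le\sqrt{2}\,\dist(W,W^*)$ (your ``$\le 2\dist$'' is even weaker). Either way you prove the lemma only with $\muhalf$ replaced by $1$ or by $\tfrac{\sqrt2}{2}$, and this constant is load-bearing: $(2L_{\nabla f}+\muhalf)$ appears verbatim in the acceptance test of Algorithm~\ref{alg:stupid} and in $\tau_t$ in Algorithm~\ref{alg:local}, and Lemmas~\ref{lem:linearconvergence} and \ref{lem:localtermination} need exactly this constant. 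The fix is to keep the $U$- and $V$-blocks paired rather than bounding them separately: with $D:=W-W^*R$ and $\hat D:=\hat W-\hat W^*R$ your four pieces regroup as
\[
U^\top U - V^\top V \;=\; \hat W^\top D \;+\; \hat D^\top (W^*R),
\]
and since $\|\hat W\|_F=\|W\|_F$ and $\|\hat D\|_F=\|D\|_F=\dist(W,W^*)$, submultiplicativity of the Frobenius norm gives $\|\hat W^\top W\|_F\le\left(\|W\|_F+\|W^*\|_F\right)\dist(W,W^*)\le\left(2\|W\|_F+\dist(W,W^*)\right)\dist(W,W^*)$ with no loss. (This is what the paper does, writing $\hat W^\top W = (\hat W R')^\top(WR'-W^*)+ (\hat WR'-\hat W^*)^\top W^*$ for the appropriate orthogonal $R'$.) Alternatively, a double Cauchy--Schwarz works: bound $\|S\|_F a + \|Y\|_F b \le \sqrt{\|S\|_F^2+\|Y\|_F^2}\sqrt{a^2+b^2}$ with $a=\|U\|_F+\|\tilde U^*\|_F$, $b=\|V\|_F+\|\tilde V^*\|_F$, and then $\sqrt{a^2+b^2}\le\|W\|_F+\|W^*\|_F$. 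Either repair makes your proof complete.
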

\begin{proof}
We begin by bounding $\|\nabla f(X)\|_F$.  Given $W$, and hence $U$ and $V$, let $R=R(W,W^*)$ in \eqref{eq:UsVs} be the matrix in $\mathcal{O}_r$ that minimizes $\|WR-W^*\|_F$. (Note that the same minimizes $\|\hat{W}R-\hat{W}^*\|_F$, that is, $R(W,W^*) = R(\hat{W},\hat{W}^*)$). By Assumption~\ref{assum:lipknown} and the definition of $X^*$, we have
\begin{align*}
\|\nabla f(X)\|_F &= \|\nabla f(X) - \nabla f(X^*)\|_F \\
&= \|\nabla f(UR (VR)^\top) - \nabla f(U^* (V^*)^\top)\|_F \\
&\leq L_{\nabla f} \|UR(VR)^\top - U^*(V^*)^\top\|_F.
\end{align*}
Further, we have
\begin{align*}
\|UR(VR)^\top - U^*(V^*)^\top\|_F
&=  \|UR(VR)^\top - UR(V^*)^\top
+ UR(V^*)^\top - U^*(V^*)^\top\|_F \\
&\leq \|UR(R^\top V^\top - (V^*)^\top)\|_F
+ \|(UR - U^*)(V^*)^\top\|_F \\
&\leq \|UR\|_F \|R^\top V^\top - (V^*)^\top\|_F
+ \|(V^*)^\top\|_F \|UR-U^*\|_F \\
&\leq (\|W\|_F + \|W^*\|_F) \dist(W,W^*),
\end{align*}
so that
\begin{equation} \label{eq:ak1}
2 \|\nabla f(X)\|_F \leq 2 L_{\nabla f}
(\|W\|_F + \|W^*\|_F) \dist(W,W^*).
\end{equation}

To bound  $\|\hat{W}^\top W\|_F$, we have by $(\hat{W}^*)^\top W^* = 0$ that 
\begin{align}
  \nonumber
\|\hat{W}^\top W\|_F &= \|(\hat{W}R)^\top WR \|_F \\
  \nonumber
&= \|(\hat{W}R)^\top WR - (\hat{W}R)^\top W^*
+ (\hat{W}R)^\top W^* - (\hat{W}^*)^\top W^*\|_F \\
  \nonumber
&\le \|(\hat{W}R)^\top (WR - W^*)\|_F +
\|((\hat{W}R)^\top- (\hat{W}^*)^\top) W^*\|_F \\
  \nonumber
  &\leq \|(\hat{W}R)^\top\|_F \|WR - W^*\|_F + \|W^*\|_F \|(\hat{W}R)^\top - (\hat{W}^*)^\top\|_F \\
  \label{eq:ak2}
&\leq (\|W\|_F + \|W^*\|_F) \dist(W,W^*).
\end{align}

By combining \eqref{eq:ak1} and \eqref{eq:ak2}, we have
\[
2\|\nabla f(X)\|_F + \muhalf \|\hat{W}^\top W\|_F
\leq (2L_{\nabla f} + \muhalf ) (\|W\|_F + \|W^*\|_F) \dist(W,W^*).
\]
To obtain the result, note that for $R=R(W,W^*)$, we have
\[
\|W^*\|_F = \|W^* - WR + WR\|_F \leq \|W^* - WR\|_F + \|WR\|_F = \dist(W,W^*) + \|W\|_F.
\]
\end{proof}

\subsection{Behavior of Algorithm~\ref{alg:stupid} under Accurate Parameter Estimates.}
\label{sec:algcorrectness}

In this section, we will show that Algorithm~\ref{alg:stupid} correctly identifies the regions $\cR_2$ and $\cR_3$ when $\gamma_k$ lies in the interval $\Gamma(X^*)$ defined by \eqref{eq:gammastardef}, that is, it is within a factor of two of $\sigma_r(X^*)$.
Additionally, once the local phase is reached with $\gamma_k \in \Gamma(X^*)$ and $W^k \in \cR_1$, the sequence $W^k_t$, $t=0,1,2,\dotsc$ generated in the local phase converges at a linear rate to a point satisfying (\ref{eq:optconds}).

These results are crucial building blocks for the main convergence results, as they show that once the parameter $\gamma_k$ is a good estimate of  $\sigma_r(X^*)$, the algorithm behaves well enough (with high probability) to not reduce $\gamma_k$ any further, and converges rapidly thereafter at a rate that depends mostly on a polynomial in the inverse of $\sigma_r(X^*)$ rather than of the tolerances in \eqref{eq:optconds}.

We begin by showing that $\cR_2$ is properly identified with high probability, in the sense that the algorithm is likely to take a successful negative curvature step when $W^k \in \cR_2$.


\begin{lemma} \label{lem:r2step}
Let Assumptions~\ref{assum:xstar} and \ref{assum:frestrictedsc} hold. 
Suppose that $W^k \in \cR_2$ and that $\gamma_k \in \Gamma(X^*)$.  
Then a negative curvature step will be taken with probability at least $1 - \rho$ by Algorithm~\ref{alg:stupid} whenever $\|\nabla G(W^k)\|_F < \cfac \gamma_k^{3/2}$. 
\end{lemma}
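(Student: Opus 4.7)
The plan is to unpack the guarantees of Procedure~\ref{alg:meo} under the hypothesis $W^k\in\cR_2$ and $\gamma_k\in\Gamma(X^*)$, and show that the oracle cannot (with high probability) return the certificate branch, so the algorithm is forced into the negative-curvature step branch.

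First, because the large-gradient test $\|\nabla G(W^k)\|_F \geq \cfac \gamma_k^{3/2}$ fails by hypothesis, Algorithm~\ref{alg:stupid} invokes Procedure~\ref{alg:meo} on $H=\nabla^2 G(W^k)$ with tolerance $\epsilon = \cgamma\gamma_k$. From the definition of $\Gamma(X^*)$ in \eqref{eq:gammastardef} we have $\gamma_k<\sigma_r(X^*)$, and since $W^k\in\cR_2$ the robust strict saddle conclusion \eqref{eq:gnegcurve} of Theorem~\ref{thm:strictsaddle} gives
\[
\lambdamin(\nabla^2 G(W^k)) \;\leq\; -\cgamma\,\sigma_r(X^*) \;<\; -\cgamma\gamma_k \;=\; -\epsilon.
\]
In particular, the certificate ``$\lambdamin(H)\geq -\epsilon$'' that Procedure~\ref{alg:meo} could return is false.

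Next, by the specification of Procedure~\ref{alg:meo}, a false certificate is issued with probability at most $\rho$. Therefore with probability at least $1-\rho$ the oracle instead outputs an estimate $\lambda\leq -\epsilon/2$ and a unit direction $s$ with $s^\top H s = \lambda$. Inspecting Algorithm~\ref{alg:stupid}, this is precisely the path in which the ``if Procedure~\ref{alg:meo} certifies $\lambdamin(\nabla^2 G(W^k))\geq -\cgamma\gamma_k$'' conditional is false, so the algorithm enters the else branch, constructs the scaled negative-curvature direction $D^k$, and executes the backtracking line search \eqref{eq:lsdecreasenc}, i.e., it takes a negative curvature step.

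Combining the two observations gives the lemma with the stated probability $1-\rho$. The argument is essentially a direct consequence of the oracle's probabilistic guarantee together with the strict saddle lower bound on $|\lambdamin(\nabla^2 G(W^k))|$ in $\cR_2$; there is no real obstacle, as no claim is being made about the success of the line search or about the quality of the step, only about which branch of Algorithm~\ref{alg:stupid} is reached.
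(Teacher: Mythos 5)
Your proof is correct and follows essentially the same route as the paper's: invoke the strict saddle bound \eqref{eq:gnegcurve} for $W^k \in \cR_2$ together with $\gamma_k \le \sigma_r(X^*)$ to show $\lambdamin(\nabla^2 G(W^k)) \le -\cgamma \gamma_k$, so that Procedure~\ref{alg:meo} (called because the large-gradient test fails) returns a negative curvature direction with probability at least $1-\rho$. Your write-up is slightly more explicit about the oracle's certificate branch being the only failure mode, but the substance is identical.
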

\begin{proof}
Since $\gamma_k \in \Gamma(X^*)$, we have $\gamma_k \leq \sigma_r(X^*)$. By Part 2 of Theorem~\ref{thm:strictsaddle}, we have
\[
\lambdamin (\nabla^2 G(W^k)) \leq -\cgamma \sigma_r(X^*) \leq -\cgamma \gamma_k,
\]
so a negative curvature step is taken whenever Procedure~\ref{alg:meo} is invoked (which occurs when $\|\nabla G(W^k)\|_F < \cfac \gamma_k^{3/2}$), provided that Procedure~\ref{alg:meo} finds a direction of negative curvature, which happens with probability at least $1-\rho$.
\end{proof}

A similar result holds for $\cR_3$.

\begin{lemma} \label{lem:r3step}
Let Assumptions \ref{assum:xstar} and \ref{assum:frestrictedsc} hold. Suppose that $W^k \in \cR_3$ and that $\gamma_k \in \Gamma(X^*)$.  
Then $\|\nabla G(W^k)\|_F \geq \cfac \gamma_k^{3/2}$, so a large-gradient step is taken by Algorithm~\ref{alg:stupid}.
\end{lemma}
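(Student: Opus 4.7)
The plan is to do a straightforward case analysis, splitting on which of the three subregions $\cR_3'$, $\cR_3''$, $\cR_3'''$ contains $W^k$, and in each case combining the gradient lower bound from Part~3 of Theorem~\ref{thm:strictsaddle} with the defining inequality of that subregion and the estimate $\gamma_k \le \sigma_r(X^*)$ (which follows from $\gamma_k \in \Gamma(X^*)$). The constant $\cfac = \cepsilon\,\mincacb \le \cepsilon$ appearing in the target bound will be dominated by the $\cepsilon$ prefactor in each of \eqref{eq:glargegrad1}--\eqref{eq:glargegrad3}, so once I can show that the ``size'' quantity appearing in each of those three bounds is at least $\gamma_k^{3/2}$, I am done.

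In the case $W^k \in \cR_3'$, \eqref{eq:glargegrad1} gives $\|\nabla G(W^k)\|_F \ge \cepsilon \sigma_r^{3/2}(X^*)$, and since $\gamma_k \le \sigma_r(X^*)$ this is at least $\cepsilon \gamma_k^{3/2} \ge \cfac \gamma_k^{3/2}$. In the case $W^k \in \cR_3''$, the definition of that region gives $\|W^k\| > \sqrt{2}\tfrac{20}{19}\|X^*\|^{1/2}$, so $\|W^k\|^3 \ge \|X^*\|^{3/2} \ge \sigma_r^{3/2}(X^*) \ge \gamma_k^{3/2}$ (using $(\sqrt{2}\,\tfrac{20}{19})^3 > 1$ and $\|X^*\| \ge \sigma_r(X^*)$), and combining with \eqref{eq:glargegrad2} yields the required inequality. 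Finally, in the case $W^k \in \cR_3'''$, the definition gives $\|W^k(W^k)^\top\|_F > \tfrac{20}{9}\|X^*\|_F$, and since $\|X^*\|_F \ge \|X^*\| \ge \sigma_r(X^*) \ge \gamma_k$, I get $\|W^k(W^k)^\top\|_F^{3/2} \ge \gamma_k^{3/2}$, which combined with \eqref{eq:glargegrad3} again gives the bound.

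Once $\|\nabla G(W^k)\|_F \ge \cfac \gamma_k^{3/2}$ is established, the conditional in Algorithm~\ref{alg:stupid} triggers the ``Large Gradient'' branch, so a steepest-descent (large-gradient) step is taken, completing the proof.

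I do not expect any real obstacle: the main point is bookkeeping of the constants, i.e.\ verifying that the dimensional quantities $\sigma_r^{3/2}(X^*)$, $\|W^k\|^3$, and $\|W^k(W^k)^\top\|_F^{3/2}$ that appear on the right of \eqref{eq:glargegrad1}--\eqref{eq:glargegrad3} each dominate $\gamma_k^{3/2}$ on the respective subregion. The one place to be slightly careful is $\cR_3''$ and $\cR_3'''$, where I need the $(20/19)$-type constants in the region definitions (together with $\|X^*\| \ge \sigma_r(X^*)$ and $\|X^*\|_F \ge \|X^*\|$) to absorb any loss, but these all go through comfortably.
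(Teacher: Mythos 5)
Your proof is correct, and in the $\cR_3''$ and $\cR_3'''$ cases it takes a genuinely different (and more elementary) route than the paper. Both arguments share the same skeleton: case analysis over the three subregions, invoking \eqref{eq:glargegrad1}--\eqref{eq:glargegrad3} and $\gamma_k \le \sigma_r(X^*)$. The difference is in how the ``size'' quantities are bounded below by $\gamma_k$. You use the elementary singular-value chain $\|X^*\|_F \ge \|X^*\| = \sigma_1(X^*) \ge \sigma_r(X^*) \ge \gamma_k$, which immediately gives $\|W^k\|^3 > \gamma_k^{3/2}$ on $\cR_3''$ and $\|W^k(W^k)^\top\|_F^{3/2} > \gamma_k^{3/2}$ on $\cR_3'''$, so you only ever need $\cfac \le \cepsilon$. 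The paper instead lower-bounds $\|X^*\|$ via the constraint $\alpha\beta \le 1/4$ on the regularity parameters, obtaining $\|X^*\| \ge 4\calpha\cbeta\gamma_k$ and hence picking up factors of $\calpha^{3/2}\cbeta^{3/2}$; this is precisely what the $\mincacb$ term in the definition \eqref{eq:cfac} of $\cfac$ is designed to absorb. Your route yields the strictly stronger conclusion $\|\nabla G(W^k)\|_F \ge \cepsilon\gamma_k^{3/2}$ and makes the $\mincacb$ safeguard in $\cfac$ unnecessary for this lemma, at the cost of not explaining why that safeguard appears in the algorithm's threshold; the paper's route is looser but self-contained in the sense that it only uses properties already packaged into the robust strict saddle framework ($\alpha\beta \le 1/4$) rather than the explicit SVD relation between $X^*$ and $\sigma_r(X^*)$.
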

\begin{proof}
First, assume that $W^k \in \cR'_3$. Then, by \eqref{eq:glargegrad1} in Theorem~\ref{thm:strictsaddle}, we have
\[
\|\nabla G(W^k)\|_F \geq \cepsilon \sigma_r(X^*)^{3/2} \geq \cfac \gamma_k^{3/2},
\]
where the second inequality follows from $\gamma_k \in \Gamma(X^*)$. 
Thus, a gradient step will be taken if $W^k \in \cR'_3$.

Next, assume that $W^k \in \cR''_3$. Since $\gamma_k \in \Gamma(X^*)$, we have that $\alpha = \calpha \sigma_r(X^*) \geq \calpha \gamma_k$. 
By Part 1 of Theorem~\ref{thm:strictsaddle}, the $(\alpha, \beta, \delta)$-regularity condition of Definition~\ref{def:regularitycond} holds with $\alpha = \calpha \sigma_r(X^*)$ and $\beta = \cbeta \|X^* \|^{-1}$. 
Since $\alpha \beta \le 1/4$ (as noted following Definition~\ref{def:regularitycond}), we have
\[
\frac14 \geq \alpha \beta= \frac{\calpha \cbeta \sigma_r(X^*)}{ \| X^*\|} \ge
\frac{\calpha \cbeta \gamma_k}{\|X^* \|},
\]
so that
\[
\|X^*\| \geq 4 \calpha \cbeta \gamma_k.
\]
By the definition of $\cR''_3$, we have
\[
\|W^k\| > \frac{20}{19} \sqrt{2} \|X^*\|^{1/2} \geq \frac{40}{19}\sqrt{2} \calpha^{1/2} \cbeta^{1/2} \gamma_k^{1/2}.
\]
Now the strict saddle property, and in particular \eqref{eq:glargegrad2}, together with \eqref{eq:cfac}, implies that
\[
\|\nabla G(W^k)\|_F \geq \cepsilon \|W^k\|^3 \geq \cepsilon \left(\frac{40}{19}\sqrt{2}\right)^3 \calpha^{3/2} \cbeta^{3/2} \gamma_k^{3/2} \geq \cfac \gamma_k^{3/2},
\]
so a gradient step is taken in this case as well.

Finally, assume that $W^k \in \cR'''_3$. By the same logic as the above, we have $\|X^*\| \geq 4 \calpha \cbeta \gamma_k$. From the definition of $\cR'''_3$, we have
\[
\|W^k(W^k)^\top\|_F > \frac{20}{9} \|X^*\|_F  \geq \frac{20}{9} \|X^*\| \geq \frac{80}{9} \calpha \cbeta \gamma_k.
\]
Together with  \eqref{eq:glargegrad3} and \eqref{eq:cfac}, we have
\begin{equation*}
\begin{split}
\|\nabla G(W^k)\|_F \geq \cepsilon\|W^k(W^k)^\top\|_F^{3/2}
&\geq \cepsilon \left(\frac{80}{9}\right)^{3/2} \calpha^{3/2} \cbeta^{3/2} \gamma_k^{3/2} \\
&\geq \cfac \gamma_k^{3/2},
\end{split}
\end{equation*}
so a gradient step is taken in this case too.
\end{proof}

Together, Lemmas~\ref{lem:r2step} and \ref{lem:r3step} imply that once $\gamma_k \in \Gamma(X^*)$, then (with high probabilty) the local phase  (Algorithm~\ref{alg:local}) will be invoked only when $W^k \in \cR_1$. 
With this observation in mind, we focus on the behavior of the local phase when $\gamma_k \in \Gamma(X^*)$.  
We show that when $W^k \in \cR_1$ and $\gamma_k \in \Gamma(X^*)$, then $W^{k+1}$ satisfies the approximate optimality conditions \eqref{eq:optconds}.

\begin{lemma} \label{lem:linearconvergence}
Let Assumptions~\ref{assum:xstar}, \ref{assum:frestrictedsc}, \ref{assum:compactlevelset}, \ref{assum:GC22}, and \ref{assum:lipknown} hold. 
Suppose that $\gamma_k \in \Gamma(X^*)$ hold and that $W^k_0 \in \cR_1$. 
Then, if Algorithm~\ref{alg:local} is invoked by Algorithm~\ref{alg:stupid}, we have for all $t \ge 0$ that $W^k_t$ in Algorithm~\ref{alg:local} satisfies
\begin{subequations}
  \label{eq:localineqshold}
  \begin{align}
    \label{eq:localineqshold.1}
    \|\nabla G(W^k_t)\|_F & \leq \frac{\sqrt{\kappa_t}}{\beta_k} \delta_k, \\
    \label{eq:localineqshold.2}
2 \|\nabla f(X^k_t)\|_F + \muhalf \|(\hat{W}^k_t)^\top W^k_t\|_F & \leq \tau_t.
\end{align}
\end{subequations}
In addition, Algorithm~\ref{alg:local} terminates with the flag ``converged'' set to ``True'' and $W^{k+1}$ satisfying
\[
\|\nabla G(W^{k+1})\|_F \leq \epsg, \quad \lambdamin(\nabla^2 G(W^{k+1})) \geq -\epsH.
\]
\end{lemma}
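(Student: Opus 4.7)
The plan is to first establish that when $\gamma_k \in \Gamma(X^*)$ and $W^k_0 \in \cR_1$, the parameters $\alpha_k$, $\beta_k$, $\delta_k$ chosen by Algorithm~\ref{alg:stupid} are weaker than, but compatible with, the true strict saddle parameters from Theorem~\ref{thm:strictsaddle}. From $\gamma_k \in \Gamma(X^*)$ I immediately obtain $\alpha_k = \calpha \gamma_k \leq \calpha \sigma_r(X^*)$ and $\sigma_r^{1/2}(X^*) \leq \delta_k = \sqrt{2}\gamma_k^{1/2} \leq \sqrt{2}\sigma_r^{1/2}(X^*)$. For $\beta_k$, I would combine \eqref{eq:XWstar} (which gives $\|W^*\|^2 = 2\|X^*\|$) with the triangle inequality in the $\dist$-metric: taking $R = R(W^k_0,W^*)$, one has $\|W^*\| \leq \|W^*\|_F = \|W^*R\|_F \leq \|W^k_0\|_F + \dist(W^k_0,W^*) \leq \|W^k_0\|_F + \delta_k$, so
\[
\beta_k = \frac{2\cbeta}{(\delta_k + \|W^k_0\|_F)^2} \leq \frac{2\cbeta}{\|W^*\|^2} = \frac{\cbeta}{\|X^*\|}.
\]
Together with $\delta_k \leq 2\sigma_r^{1/2}(X^*)$, the $(\calpha \sigma_r(X^*), \cbeta\|X^*\|^{-1}, 2\sigma_r^{1/2}(X^*))$-regularity of $G$ at $W^*$ guaranteed by Theorem~\ref{thm:strictsaddle} implies, by monotonicity of the regularity inequality in $\alpha$ and $\beta$, an $(\alpha_k,\beta_k,\delta_k)$-regularity at $W^*$.

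Next I would argue inductively using Lemma~\ref{lem:localconvergence}. The Armijo line search \eqref{eq:lsdecreasegradlocal} terminates in finitely many backtracking steps by Assumption~\ref{assum:GC22}, producing $\nu_t \in [\nu_{\min}, 2\beta_k]$ for some $\nu_{\min} > 0$ depending only on $\beta_k$, $L_g$, $\eta$, and $\theta$. Since $\dist(W^k_0,W^*) \leq \sigma_r^{1/2}(X^*) \leq \delta_k$, Lemma~\ref{lem:localconvergence} applied to $G$ yields $\dist^2(W^k_t,W^*) \leq \kappa_t\, \dist^2(W^k_0,W^*) \leq \kappa_t \delta_k^2$ for all $t$, keeping every iterate inside the regularity ball. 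The bound \eqref{eq:localineqshold.1} is then a direct consequence of \eqref{eq:gradientUB} applied to $G$:
\[
\beta_k \|\nabla G(W^k_t)\|_F \leq \dist(W^k_t,W^*) \leq \sqrt{\kappa_t}\,\delta_k,
\]
while \eqref{eq:localineqshold.2} follows by evaluating Lemma~\ref{lem:ncbound} at $W = W^k_t$ and using $\dist(W^k_t,W^*) \leq \sqrt{\kappa_t}\,\delta_k$; the resulting right-hand side is precisely the quantity $\tau_t$ defined in Algorithm~\ref{alg:local}.

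Finally, termination with \emph{converged} set to True follows from the fact that the while-condition of Algorithm~\ref{alg:local} is exactly \eqref{eq:localineqshold}, which I have just shown always holds, so the loop can only exit through the inner convergence check. Because $\kappa_t \leq (1 - 2\nu_{\min}\alpha_k)^t \to 0$ geometrically, both $\sqrt{\kappa_t}\delta_k/\beta_k$ and $\tau_t$ fall below $\epsg$ and $\epsH$ respectively in finitely many steps; at the first such index the inner IF triggers and returns $W^{k+1}$. The first clause of the IF yields $\|\nabla G(W^{k+1})\|_F \leq \epsg$ directly, while Lemma~\ref{lem:nclowerbound} combined with the second clause gives
\[
\lambdamin(\nabla^2 G(W^{k+1})) \geq -2\|\nabla f(X^{k+1})\|_F - \muhalf\|(\hat{W}^{k+1})^\top W^{k+1}\|_F \geq -\epsH,
\]
which is the required second-order bound.

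The main technical hurdle is the parameter bound $\beta_k \leq \cbeta/\|X^*\|$ in the first paragraph: Algorithm~\ref{alg:stupid} computes $\beta_k$ from the observable quantities $\|W^k\|_F$ and $\delta_k$ without any a-priori knowledge of $\|X^*\|$, and reconciling this data-driven estimate with the $\|X^*\|^{-1}$-scaled constant from Theorem~\ref{thm:strictsaddle} is precisely where \eqref{eq:XWstar} and the hypothesis $W^k_0 \in \cR_1$ earn their keep; without either, the regularity condition of the theorem could not be invoked with the algorithm's chosen parameters and the rest of the argument would not go through.
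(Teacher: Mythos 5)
Your proposal is correct and follows essentially the same route as the paper's proof: the same use of \eqref{eq:XWstar} and the triangle inequality to show $\beta_k \le \cbeta/\|X^*\|$, Lemma~\ref{lem:localconvergence} to get $\dist^2(W^k_t,W^*) \le \kappa_t\delta_k^2$, \eqref{eq:gradientUB} and Lemma~\ref{lem:ncbound} for the two while-loop invariants, and Lemma~\ref{lem:nclowerbound} for the final curvature bound. Your termination argument is in fact slightly sharper than the paper's (you invoke the explicit lower bound $\nu_t \ge \nu_{\min}$ to get geometric decay of $\kappa_t$, whereas the paper argues only that $\kappa_t$ decreases to zero), but this is a refinement, not a different approach.
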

\begin{proof}
Since $W^k_0 \in \cR_1$ and $\gamma_k \in \Gamma(X^*)$, it follows that
\[
\delta_k := \sqrt{2} \gamma_k^{1/2} \geq \sigma_r^{1/2}(X^*)
\]
and therefore $\dist(W^k_0, W^*) \leq \delta = \sigma_r^{1/2}(X^*) \leq \sqrt{2} \gamma_k^{1/2} = \delta_k$, where $\delta$ is from the $(\alpha, \beta, \delta)$ regularity condition \eqref{eq:gregular} and is defined to be $\sigma_r^{1/2}(X^*)$ in Theorem~\ref{thm:strictsaddle}.  
Let $R = R(W^k_0,W^*)$ be the orthogonal matrix that minimizes $\|W^*R - W^k_0\|_F.$ 
Then, using \eqref{eq:XWstar}, we have
\begin{align*}
\sqrt{2} \|X^*\|^{1/2} = \|W^*\| \leq \|W^*\|_F &= \|W^* R\|_F \\ 
&\leq  \|W^* R - W^k_0\|_F + \|W^k_0\|_F \\ &= \dist(W^k_0,W^*) + \|W^k_0\|_F \\ &\leq \delta_k + \|W^k_0\|_F.
\end{align*}
It follows that
\begin{equation} \label{eq:za4}
\beta_k = \frac{2 \cbeta}{(\delta_k + \|W^k_0\|_F)^2} \leq \frac{\cbeta}{\|X^*\|} = \beta.
\end{equation}
where $\beta$ is from the $(\alpha, \beta, \delta)$ regularity condition \eqref{eq:gregular} and defined to be $\cbeta \|X^*\|^{-1}$ in Theorem~\ref{thm:strictsaddle}. 
Therefore, by taking a stepsize $\nu_t \leq 2\beta_k \leq 2\beta$, it follows from Lemma~\ref{lem:localconvergence} that $W^k_t \in B(W^*, \delta)$ for all $t \geq 0$ and
\begin{equation} \label{eq:lemconvergenceeq1}
\dist^2(W^k_t, W^*) \leq \dist^2(W^k_0, W^*) \prod_{j=0}^{t-1} (1 - 2 \nu_j \alpha)  
\leq  \delta_k^2 \prod_{j=0}^{t-1} (1 - 2 \nu_j \alpha_k) = \kappa_t \delta_k^2,
\end{equation}
where we used $\alpha = \calpha \sigma_r(X^*) \geq \calpha \gamma_k = \alpha_k$, $\dist(W^k_0, W^*) \leq \delta_k$, and the definition of $\kappa_t$. 
Recalling \eqref{eq:gradientUB} and the definition of $\beta$ in our strict saddle conditions, it follows that
\[
\frac{\cbeta}{\|X^*\|} \|\nabla G(W^k_t)\| \leq \dist(W^k_t, W^*), \quad
\mbox{for all $t \ge 0$.}
\]
Together with \eqref{eq:lemconvergenceeq1} this implies
\[
\|\nabla G(W^k_t)\| \leq \sqrt{\kappa_t} \frac{\|X^*\|}{\cbeta} \delta_k \leq
\frac{\sqrt{\kappa_t}}{\beta_k} \delta_k,
\]
where we used $\|X^*\|/\cbeta=1/\beta \le 1/\beta_k$ for the latter inequality, thus proving \eqref{eq:localineqshold.1}. 
To prove \eqref{eq:localineqshold.2} (which holds for $W^k_0$ by our local phase initialization step in Algorithm \ref{alg:stupid}, by the definition of $\tau_0$), we have from Lemma~\ref{lem:ncbound} and \eqref{eq:lemconvergenceeq1} that
\begin{align*}
&2\|\nabla f(X^k_t)\|_F + \muhalf \|(\hat{W^k_t})^\top W^k_t\|_F \\
&\leq (2 L_{\nabla f} + \muhalf) (2\|W^k_t\|_F + \dist(W^k_t,W^*))\dist(W^k_t,W^*) \\
&\leq (2 L_{\nabla f} + \muhalf) (2\|W^k_t\|_F + \sqrt{\kappa_t}\delta_k)\sqrt{\kappa_t}\delta_k
= \tau_t.
\end{align*}

It follows from \eqref{eq:localineqshold} that the ``while'' loop in Algorithm~\ref{alg:local} terminates only when $\|\nabla G(W^k_t)\| \leq \epsg$ and $2\|\nabla f(X^k_t)\|_F + \muhalf \|(\hat{W^k_t})^\top W^k_t\|_F \leq \epsH$.  
Since $0 \le 1 - 2 \nu_{t-1} \alpha_k < 1$, the bounds $0 \leq \kappa_t \leq \kappa_{t-1}$ hold for all $t=1,2,\dotsc$, with equality holding only when $\kappa_{t-1} = 0$. 
Therefore, by \eqref{eq:localineqshold}, it follows by the definition of $\tau_t$ and $\|W^k_t\| \leq R_{\mathcal{L}}$ for all $t \ge 0$ (see \eqref{eq:boundscompact}) that
\[
\lim_{t \rightarrow \infty} \|\nabla G(W^k_t)\| = 0, \quad \lim_{t
  \rightarrow \infty} 2\|\nabla f(X^k_t)\|_F + \muhalf
\|(\hat{W^k_t})^\top W^k_t\|_F = 0,
\]
and thus the while loop must terminate with
\[
\|\nabla G(W^k_{\bar{t}})\| \leq \epsg, \quad
2\|\nabla f(X^k_{\bar{t}})\|_F + \muhalf \|(\hat{W}^k_{\bar{t}})^\top W^k_{\bar{t}}\|_F \leq \epsH,
\]
for some $\bar{t} \geq 0$. Then, by Lemma~\ref{lem:nclowerbound} and
$W^{k+1} = W^k_{\bar{t}}$, the final claim is proved.
\end{proof}


\section{Complexity Analysis.} \label{sec:complexity}

This section presents our complexity results for Algorithm~\ref{alg:stupid}. We provide a brief "roadmap" to the sequence of results here.

We start by showing (Lemma~\ref{lem:parambounds}) how the parameters $\alpha_k$, $\delta_k$, and $\beta_k$ in the algorithm relate to the properties of the objective function and solution, in particular the key quantity $\sigma_r(X^*)$. We follow up with a result (Lemma~\ref{lem:btdecrease}) that shows that the reduction in $G$ from a backtracking line search along the negative gradient direction is a multiple of $\|\nabla G (W) \|^2$, then apply this result to the line searches \eqref{eq:lsdecreasegrad} and \eqref{eq:lsdecreasegradlocal} (see Lemmas~\ref{lem:graddecrease} and \ref{lem:localgraddecrease}, respectively). For backtracking steps along negative curvature directions \eqref{eq:lsdecreasenc}, we show that the reduction in $G$ is a multiple of $\gamma_k^3$ (Lemma~\ref{lem:ncdecrease}).

The next result, Lemma~\ref{lem:maxinneriters},  is a bound on the number of iterations taken in Algorithm~\ref{alg:local} when it is invoked with $\gamma_k \ge \tfrac12 \sigma_r(X^*)$. We then return to the main algorithm, Algorithm~\ref{alg:stupid}, and derive a bound on the number of non-local iterations (negative gradient or negative curvature steps) under the assumptions that $\gamma_k \ge \tfrac12 \sigma_r(X^*)$ and $G$ is bounded below (Lemma~\ref{lem:maxgradnciters}). Lemma~\ref{lem:localtermination} then derives conditions under which a call to Algorithm~\ref{alg:local} will be made that results in successful termination.

Lemma~\ref{lem:mingamma} is a particularly important result, showing that with high probability, we have that $\gamma_k \ge \tfrac12 \sigma_r(X^*)$ at all iterations, and placing a bound on the number of times that Algorithm~\ref{alg:local} is invoked. This result leads into the main convergence results, Theorem~\ref{thm:wcc} and Corollary~\ref{cor:wcc}, which show iteration and complexity bounds for the algorithm.



\subsection{Strict Saddle Parameters.} 

In this subsection, we present lemmas which provide bounds on the parameters $\alpha_k$, $\beta_k$, $\delta_k$, and $\gamma_k$ which are generated throughout Algorithm~\ref{alg:stupid} and are used to estimate the true strict saddle parameters.

\begin{lemma} \label{lem:parambounds}
Let Assumptions \ref{assum:xstar}, \ref{assum:frestrictedsc}, and \ref{assum:compactlevelset} hold.  
Let $\alpha_k$, $\gamma_k$, $\delta_k$, and $\beta_k$ be the values of defined in Algorithm~\ref{alg:stupid} (for those iterations $k$ on which they are defined). 
Then, for any $k$ such that $\gamma_k \geq \frac12 \sigma_r(X^*)$ holds, we have
\begin{subequations} \label{eq:kva}
  \begin{align}
    \label{eq:kva.1}
\frac12 \sigma_r(X^*) &\leq\gamma_k \leq \gamma_0, \\
   \label{eq:kva.2}
\frac{\calpha}{2} \sigma_r(X^*) &\leq \alpha_k \leq \calpha \gamma_0, \\
   \label{eq:kva.3}
\sigma_r^{1/2}(X^*) &\leq \delta_k \leq \sqrt{2} \gamma_0^{1/2}, \\
   \label{eq:kva.4}
\frac{2\cbeta}{\left(\sqrt{2} \gamma_0^{1/2} + R_{\mathcal{L}}\right)^2} &\leq \beta_k \leq \frac{2\cbeta}{\sigma_r(X^*)},
\end{align}
\end{subequations}
where $ R_{\mathcal{L}}$ is defined in \eqref{eq:boundscompact}.
\end{lemma}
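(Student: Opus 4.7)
The plan is to verify each of the four double-inequalities in \eqref{eq:kva} by direct substitution into the update rules for $\alpha_k$, $\delta_k$, and $\beta_k$ in Algorithm~\ref{alg:stupid}, after first establishing the two underlying monotonicity facts: (a) $\gamma_k$ is non-increasing in $k$, and (b) the iterates $W^k$ remain in the level set $\mathcal{L}_G(W^0)$. Fact (a) follows by inspection of Algorithm~\ref{alg:stupid}, where $\gamma_{k+1}$ is either set equal to $\gamma_k$ (after a gradient or negative-curvature step) or halved to $\tfrac12\gamma_k$ (after the minimum-eigenvalue oracle certifies a lower bound). Fact (b) follows because the backtracking line searches \eqref{eq:lsdecreasegrad}, \eqref{eq:lsdecreasenc}, and \eqref{eq:lsdecreasegradlocal} enforce strict decrease in $G$, while the only branches that leave $W^{k+1}=W^k$ do not alter $G$; hence $G(W^{k+1})\le G(W^k)$ holds throughout, and Assumption~\ref{assum:compactlevelset} together with \eqref{eq:boundscompact} gives $\|W^k\|_F\le R_{\mathcal{L}}$ (taking $R_{\mathcal{L}}$ as the compact-level-set Frobenius bound).

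Given these two facts, \eqref{eq:kva.1} is immediate: the lower bound is the hypothesis of the lemma, and the upper bound follows from $\gamma_0\ge \gamma_k$ by (a). The bound \eqref{eq:kva.2} then follows from $\alpha_k=\calpha\gamma_k$ by multiplying \eqref{eq:kva.1} by the positive constant $\calpha$, and \eqref{eq:kva.3} follows from $\delta_k=\sqrt{2}\gamma_k^{1/2}$ by taking square roots of \eqref{eq:kva.1} and multiplying by $\sqrt{2}$ (noting that $\sqrt{2}\cdot(\tfrac12\sigma_r(X^*))^{1/2}=\sigma_r^{1/2}(X^*)$).

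For \eqref{eq:kva.4}, I would bound the denominator $(\delta_k+\|W^k\|_F)^2$ of $\beta_k$ from both sides. The upper bound on $\beta_k$ uses only the lower bound on $\delta_k$ from \eqref{eq:kva.3}, together with $\|W^k\|_F\ge 0$, giving
\[
(\delta_k+\|W^k\|_F)^2 \;\ge\; \delta_k^2 \;=\; 2\gamma_k \;\ge\; \sigma_r(X^*),
\]
so $\beta_k \le 2\cbeta/\sigma_r(X^*)$. The lower bound on $\beta_k$ uses the upper bound on $\delta_k$ from \eqref{eq:kva.3} and $\|W^k\|_F\le R_{\mathcal{L}}$ from fact (b), yielding $(\delta_k+\|W^k\|_F)^2\le(\sqrt{2}\gamma_0^{1/2}+R_{\mathcal{L}})^2$, and hence the stated lower bound on $\beta_k$.

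The proof is essentially bookkeeping; the only mildly delicate point is (b), and in particular making sure every branch of Algorithm~\ref{alg:stupid} (including the negative-curvature branch whose $W^{k+1}=W^k$ update accompanies the halving of $\gamma_k$, and the local-phase branch whose intermediate iterates $W^k_t$ inherit the line-search decrease \eqref{eq:lsdecreasegradlocal}) preserves membership in $\mathcal{L}_G(W^0)$, so that the compact-level-set bound on $\|W^k\|_F$ can legitimately be applied in deriving the lower bound in \eqref{eq:kva.4}.
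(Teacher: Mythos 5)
Your proposal is correct and follows essentially the same route as the paper's proof: monotonicity of $\gamma_k$ gives \eqref{eq:kva.1}, the definitions of $\alpha_k$ and $\delta_k$ give \eqref{eq:kva.2}--\eqref{eq:kva.3}, and the two-sided bound on $(\delta_k+\|W^k\|_F)^2$ together with the level-set bound $\|W^k\|_F\le R_{\mathcal{L}}$ gives \eqref{eq:kva.4}. The paper likewise justifies $\|W^k\|_F\le R_{\mathcal{L}}$ by the monotone decrease of $G$ enforced by the backtracking line searches, so no further comment is needed.
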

\begin{proof}
By the definition of our algorithm, it follows that $\gamma_0 \geq \gamma_k$ for all $k$. In addition, by our assumption that $\gamma_k \geq \frac12 \sigma_r(X^*)$ holds, we have proved \eqref{eq:kva.1}.

Noting that $\alpha_k = \calpha \gamma_k$, \eqref{eq:kva.2} follows directly from \eqref{eq:kva.1}.

Now, for $\delta_k$, we have from $\gamma_k \geq \frac12 \sigma_r(X^*)$ that
\[
\delta_k = \sqrt{2} \gamma_k^{1/2} \geq \sqrt{2} \left(\frac12
\sigma_r(X^*)\right)^{1/2} = \sigma_r^{1/2}(X^*),
\]
while
\[
\delta_k = \sqrt{2} \gamma_k^{1/2}  \leq \sqrt{2} \gamma_0^{1/2},
\]
proving \eqref{eq:kva.3}.

Recalling the definition of $\beta_k$, we have
\[
\beta_k = \frac{2\cbeta}{(\delta_k + \|W^k\|_F)^2} = \frac{2\cbeta}{(\sqrt{2} \gamma_k^{1/2} + \|W^k\|_F)^2}
\leq \frac{2\cbeta}{\sigma_r(X^*)}.
\]
For a lower bound on $\beta_k$, note that the backtracking linesearches at each step ensure monotonicity of the iterates, so that $W^k \in \mathcal{L}_{W^0}$ for all $k \geq 0$. Thus, we have
\[
\beta_k = \frac{2\cbeta}{\left(\delta_k + \|W^k\|_F\right)^2} = \frac{2\cbeta}{\left(\sqrt{2} \gamma_k^{1/2} + \|W^k\|_F\right)^2}
\geq \frac{2\cbeta}{\left(\sqrt{2} \gamma_0^{1/2} + R_{\mathcal{L}}\right)^2},
\]
completing our proof of  \eqref{eq:kva.4}.
\end{proof}

\subsection{Line Search Guarantees.}
We now provide guarantees of termination and descent for the two line searches in Algorithm~\ref{alg:stupid} and the line search in Algorithm~\ref{alg:local}.  
We begin by providing a generic lemma for Armijo backtracking on gradient descent steps.

\begin{lemma} \label{lem:btdecrease}
Suppose that Assumptions~\ref{assum:compactlevelset} and \ref{assum:GC22} hold. 
Suppose that a step is computed from $W$ using a backtracking linesearch along the negative gradient direction 
with a step length $\nu=\zeta \theta^l$, where $l$ is the smallest nonnegative integer such that
\begin{equation} \label{eq:lsdecreasebt}
G(W - \nu \nabla G(W)) < G(W) - \eta \nu \|\nabla G(W)\|_F^2,
\end{equation}
where $\eta \in (0,1)$ is the sufficient decrease parameter. Then, the backtracking line search requires at most $j \leq \jbt+ 1$ iterations, where
\begin{equation} \label{eq:defjbt}
\jbt= \left[\log_\theta\left(\frac{2(1 - \eta)}{L_g \zeta}\right)\right]_+,
\end{equation}
and the resulting step satisfies
\begin{equation} \label{eq:btdecrease}
G(W) - G(W - \nu \nabla G(W)) \geq \cbt \|\nabla G(W)\|^2_F
\end{equation}
where $\nu$ is the step size found by the backtracking procedure and
\begin{equation} \label{eq:defcbt}
\cbt = \eta \min\left\{\zeta, \frac{2 \theta(1 - \eta)}{L_g}\right\}. 
\end{equation}
\end{lemma}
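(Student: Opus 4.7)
The plan is to combine the Lipschitz smoothness bound \eqref{eq:Lg} with a case analysis on whether backtracking activates. First I would set $D = -\nu \nabla G(W)$ in \eqref{eq:Lg} to obtain
\[
G(W - \nu \nabla G(W)) \le G(W) - \nu \|\nabla G(W)\|_F^2 + \tfrac{L_g}{2} \nu^2 \|\nabla G(W)\|_F^2,
\]
and observe that whenever $\nu \le 2(1-\eta)/L_g$ this right-hand side is dominated by $G(W) - \eta \nu \|\nabla G(W)\|_F^2$, so the Armijo condition \eqref{eq:lsdecreasebt} is automatically satisfied. Thus the Lipschitz inequality gives a sufficient (but not necessary) condition for acceptance of the trial step.

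Next I would bound the number of backtracks. Since the trial step at iteration $l$ is $\zeta \theta^l$ and $\theta \in (0,1)$, the sufficient condition $\zeta \theta^l \le 2(1-\eta)/L_g$ holds as soon as $l \ge \log_\theta\!\bigl(2(1-\eta)/(L_g \zeta)\bigr)$. Taking the smallest such nonnegative integer, we get $l \le \lceil \log_\theta(2(1-\eta)/(L_g\zeta))\rceil_+ \le \jbt + 1$, where $\jbt$ is as in \eqref{eq:defjbt}; this is the iteration-count claim. (If $\zeta$ already satisfies the sufficient condition, then $\jbt = 0$ and $l = 0$ works.)

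For the decrease bound, I would do a simple dichotomy on the accepted step $\nu = \zeta \theta^{j_k}$. If $j_k = 0$ then $\nu = \zeta$. Otherwise the previous trial $\nu/\theta$ was rejected by \eqref{eq:lsdecreasebt}, and since the Lipschitz-based sufficient condition would have forced acceptance when $\nu/\theta \le 2(1-\eta)/L_g$, rejection implies $\nu/\theta > 2(1-\eta)/L_g$, i.e., $\nu > 2\theta(1-\eta)/L_g$. Combining the two cases yields $\nu \ge \min\{\zeta, 2\theta(1-\eta)/L_g\}$. Plugging this lower bound on $\nu$ into the Armijo condition \eqref{eq:lsdecreasebt} (which the accepted step satisfies by construction) gives
\[
G(W) - G(W - \nu \nabla G(W)) \ge \eta \nu \|\nabla G(W)\|_F^2 \ge \cbt \|\nabla G(W)\|_F^2,
\]
with $\cbt$ as in \eqref{eq:defcbt}.

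I do not expect any real obstacle: the whole proof is the standard Armijo/backtracking analysis. The only subtlety worth stating cleanly is the contrapositive argument in the $j_k \ge 1$ case (the previous trial being rejected by the \emph{actual} Armijo condition implies it also fails the \emph{Lipschitz-sufficient} condition, hence the lower bound on $\nu$). Everything else is arithmetic manipulation of $\log_\theta$ and the $[\cdot]_+$ operation, plus the convention that an accepted step at $l=0$ corresponds to the case $\zeta \le 2(1-\eta)/L_g$ where $\jbt = 0$.
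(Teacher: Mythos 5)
Your proposal is correct and follows essentially the same argument as the paper's proof: both use the Lipschitz bound \eqref{eq:Lg} to show that a rejected trial step must exceed $2(1-\eta)/L_g$, deduce the iteration bound from the geometric decay of $\zeta\theta^l$, and obtain the decrease bound by the dichotomy on whether the initial step $\zeta$ is accepted, plugging the resulting step-size lower bound into the Armijo condition. The only cosmetic difference is that you phrase the key step as a sufficient condition for acceptance while the paper derives the equivalent necessary condition for rejection.
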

\begin{proof}
Suppose that the maximum steplength is accepted (that is, $\nu = \zeta$). Then,
\[
G(W - \zeta \nabla G(W)) < G(W) - \zeta \eta \|\nabla G(W)\|_F^2
\]
so the claim holds in this case.
For the remainder of the proof, we assume that $\nu < 1$. For any $l \geq 0$ such that \eqref{eq:lsdecreasebt} does not hold, we have from \eqref{eq:Lg} that
\begin{align*}
-\eta \zeta \theta^l \|\nabla G(W)\|_F^2 &\leq
G(W - \zeta \theta^l \nabla G(W)) - G(W) \\
&\leq -\zeta \theta^l \langle \nabla G(W), \nabla G(W) \rangle
+ \frac{L_g \zeta^2 \theta^{2l}}{2} \|\nabla G(W)\|_F^2 \\
&= -\zeta \theta^l \left(1 - \frac{L_g \zeta \theta^l}{2}\right) \|\nabla G(W)\|_F^2.
\end{align*}
By rearranging this expression, we obtain
\[
\frac{L_g}{2} \zeta \theta^l \geq 1 - \eta.
\]
Therefore, for any $l \geq 0$ where \eqref{eq:lsdecreasebt} is not satisfied, we have
\begin{equation} \label{eq:thetalbbtlem}
\theta^l \geq \frac{2(1 - \eta)}{L_g \zeta}
\end{equation}
holds. For any $l > \jbt$ we have
\[
\theta^l < \theta^{\jbt} \leq \frac{2(1 - \eta)}{L_g \zeta}
\]
so \eqref{eq:thetalbbtlem} cannot be satisfied for any $l > \jbt$ and the line search must terminate with $\nu = \zeta \theta^j$ for some $1 \leq j \leq \jbt + 1$. 
The value of this index prior to backtracking termination satisfies \eqref{eq:thetalbbtlem}, so we have
\[
\theta^j \geq \frac{2 \theta (1 - \eta)}{L_g \zeta}.
\]
Thus,
\[
G(W) - G(W - \nu \nabla G(W)) \geq  \eta \zeta \theta^j \|\nabla G(W)\|^2
\geq \frac{2 \eta \theta(1 - \eta)}{L_g} \|\nabla G(W)\|^2.
\]
\end{proof}

Lemma~\ref{lem:btdecrease} is used directly in  the next two results, which provide termination and decrease guarantees for the linesearches used in the large gradient case and in the local phase.

\begin{lemma} \label{lem:graddecrease}
Suppose that Assumptions~\ref{assum:compactlevelset} and \ref{assum:GC22} hold. 
Suppose that the backtracking step \eqref{eq:lsdecreasegrad} is taken at outer iteration $k$. 
Then, the backtracking line search requires at most $j_k \leq \jgrad + 1$ iterations, where
\begin{equation} \label{eq:defjgrad}
\jgrad := \left[\log_\theta\left(\frac{2(1 - \eta)}{L_g}\right)\right]_+,
\end{equation}
and the resulting step satisfies $W^{k+1} = W^k - \nu_k \nabla G(W^k)$
\begin{equation} \label{eq:graddecrease}
G(W^k) - G(W^{k+1}) \geq \cgrad \|\nabla G(W^k)\|^2_F
\end{equation}
where
\begin{equation} \label{eq:defcgrad}
\cgrad = \eta \min\left\{1, \frac{2 \theta(1 - \eta)}{L_g}\right\}. 
\end{equation}
\end{lemma}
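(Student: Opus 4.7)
The plan is to observe that Lemma~\ref{lem:graddecrease} is precisely the specialization of the generic backtracking result in Lemma~\ref{lem:btdecrease} to the case $\zeta = 1$. In Algorithm~\ref{alg:stupid}, the stepsize tested at the $\ell$-th backtracking trial in \eqref{eq:lsdecreasegrad} is $\theta^\ell$, i.e.\ the initial trial length is $\zeta = 1$, and the sufficient decrease inequality \eqref{eq:lsdecreasegrad} matches the generic inequality \eqref{eq:lsdecreasebt} under this identification.

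Therefore I would simply invoke Lemma~\ref{lem:btdecrease} with $W = W^k$ and $\zeta = 1$. Substituting $\zeta = 1$ into \eqref{eq:defjbt} yields
\[
\jbt = \left[\log_\theta\!\left(\frac{2(1-\eta)}{L_g}\right)\right]_+ = \jgrad,
\]
so the bound $j_k \le \jbt + 1$ gives $j_k \le \jgrad + 1$, as required. Similarly, substituting $\zeta = 1$ into \eqref{eq:defcbt} yields
\[
\cbt = \eta \min\!\left\{1, \frac{2\theta(1-\eta)}{L_g}\right\} = \cgrad,
\]
and the decrease guarantee \eqref{eq:btdecrease} becomes exactly \eqref{eq:graddecrease} once we identify $W^{k+1} = W^k - \nu_k \nabla G(W^k)$.

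The only point one should verify is that the hypotheses of Lemma~\ref{lem:btdecrease} apply, namely that all iterates and the line of points tested during backtracking lie in the neighborhood of $\mathcal{L}_G(W^0)$ on which Assumption~\ref{assum:GC22} supplies the constant $L_g$. This follows from the fact that the sufficient decrease condition \eqref{eq:lsdecreasegrad} ensures monotonicity of $\{G(W^k)\}$, keeping each accepted iterate in $\mathcal{L}_G(W^0)$, and that the backtracking procedure only evaluates points along the segment from $W^k$ to $W^k - \nabla G(W^k)$, which lies in the assumed open neighborhood of the level set. No obstacle is expected; this lemma is essentially a corollary stated separately for convenient reference in the subsequent complexity analysis.
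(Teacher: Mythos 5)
Your proposal is correct and matches the paper's proof exactly: the paper also obtains this lemma as an immediate specialization of Lemma~\ref{lem:btdecrease} with $\zeta = 1$. The additional verification that the backtracking segment stays in the neighborhood where $L_g$ applies is a reasonable (if implicit in the paper) sanity check and does not change the argument.
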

\begin{proof}
This proof follows directly from Lemma~\ref{lem:btdecrease} with
$\zeta = 1$.
\end{proof}

\begin{lemma} \label{lem:localgraddecrease}
Suppose that Assumptions~\ref{assum:xstar}, \ref{assum:frestrictedsc}, \ref{assum:compactlevelset}, and \ref{assum:GC22} hold. 
Suppose that the backtracking step \eqref{eq:lsdecreasegradlocal} is taken at inner iteration $t$ of outer iteration $k$, and that $\gamma_k \ge \tfrac12 \sigma_r (X^*)$.
Then, the backtracking line search requires at most $j_t \leq \hat{j}_k + 1$ iterations, where
\begin{equation} \label{eq:defjlocal}
\hat{j}_k := \left[\log_\theta\left(\frac{1 - \eta}{L_g \beta_k}\right)\right]_+,
\end{equation}
the resulting step satisfies $W^k_{t+1} = W^k_t - \nu_t \nabla G(W^k_t)$, and 
\begin{equation} \label{eq:localgraddecrease}
G(W^k_t) - G(W^k_{t+1}) \geq \clocal \|\nabla G(W^k_t)\|^2_F
\end{equation}
where
\[
\clocal = \eta \min\left\{\frac{4\cbeta}{(\sqrt{2}\gamma_0^{1/2} +
  R_{\mathcal{L}})^2}, \frac{2\theta(1 - \eta)}{L_g}\right\}.
\]
\end{lemma}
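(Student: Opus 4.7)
The plan is to apply the generic backtracking result Lemma~\ref{lem:btdecrease} to the line search \eqref{eq:lsdecreasegradlocal} with initial stepsize $\zeta = 2\beta_k$ (instead of $\zeta = 1$ as in Lemma~\ref{lem:graddecrease}). With this choice, substituting into the expression \eqref{eq:defjbt} for $\jbt$ gives
\[
\jbt = \left[\log_\theta\left(\frac{2(1-\eta)}{L_g \cdot 2\beta_k}\right)\right]_+ = \left[\log_\theta\left(\frac{1-\eta}{L_g \beta_k}\right)\right]_+ = \hat{j}_k,
\]
which yields the stated bound $j_t \leq \hat{j}_k + 1$ immediately. The corresponding decrease guarantee from \eqref{eq:btdecrease} takes the form $G(W^k_t) - G(W^k_{t+1}) \geq \cbt \|\nabla G(W^k_t)\|_F^2$ with
\[
\cbt = \eta \min\left\{2\beta_k, \frac{2\theta(1-\eta)}{L_g}\right\}.
\]

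To convert $\cbt$ to the problem-independent constant $\clocal$ stated in the lemma, I would invoke the lower bound on $\beta_k$ from Lemma~\ref{lem:parambounds}. Since we are assuming $\gamma_k \geq \tfrac12 \sigma_r(X^*)$, part \eqref{eq:kva.4} of that lemma applies and gives
\[
\beta_k \geq \frac{2\cbeta}{(\sqrt{2}\gamma_0^{1/2} + R_{\mathcal{L}})^2},
\]
so that $2\beta_k \geq 4\cbeta/(\sqrt{2}\gamma_0^{1/2} + R_{\mathcal{L}})^2$. Plugging this bound into the expression for $\cbt$ above yields $\cbt \geq \clocal$, and hence \eqref{eq:localgraddecrease} holds.

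There is no real obstacle; the argument is a one-line specialization of Lemma~\ref{lem:btdecrease} plus a single invocation of Lemma~\ref{lem:parambounds}. The only point requiring care is recognizing that the local phase uses initial stepsize $2\beta_k$ rather than $1$, which is precisely why the assumption $\gamma_k \geq \tfrac12 \sigma_r(X^*)$ is needed, so that a uniform (iteration-independent) lower bound on $\beta_k$ is available to define $\clocal$.
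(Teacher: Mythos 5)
Your proposal is correct and follows exactly the paper's own argument: apply Lemma~\ref{lem:btdecrease} with $\zeta = 2\beta_k$ and then use the lower bound on $\beta_k$ from Lemma~\ref{lem:parambounds} (which is where the hypothesis $\gamma_k \ge \tfrac12 \sigma_r(X^*)$ enters) to pass from the step-dependent constant to $\clocal$. Nothing is missing.
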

\begin{proof}
This result follows from Lemma~\ref{lem:btdecrease} with $\zeta = 2\beta_k$ and
\[
\beta_k \geq \frac{2\cbeta}{(\sqrt{2}\gamma_0^{1/2} + R_{\mathcal{L}})^2},
\]
which follows from Lemma~\ref{lem:parambounds}.
\end{proof}

Next, we provide similar guarantees for the negative curvature linesearch in Algorithm~\ref{alg:stupid}.

\begin{lemma} \label{lem:ncdecrease}
Suppose that Assumptions~\ref{assum:compactlevelset} and \ref{assum:GC22}
hold. Suppose that the backtracking step \eqref{eq:lsdecreasenc} is
taken at outer iteration $k$. Then, the backtracking
line search requires at most $j_k \leq \jnc + 1$ iterations, where
\begin{equation} \label{eq:defjnc}
\jnc:= \left[\log_\theta\left(\frac{3(1 - \eta)}{L_H}\right)\right]_+,
\end{equation}
and the resulting step satisfies $W^{k+1} = W^k + \nu_k D^k$
\begin{equation} \label{eq:ncdecrease}
G(W^k) - G(W^{k+1}) \geq \cnc \gamma_k^3
\end{equation}
where
\begin{equation} \label{eq:defcnc}
\cnc = \frac{\eta \cgamma^3}{16} \min\left\{1, \left(\frac{3 \theta (1 - \eta)}{L_H} \right)^2\right\}. 
\end{equation}
\end{lemma}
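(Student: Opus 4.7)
The plan is to follow the same template as Lemma~\ref{lem:btdecrease}, but replace \eqref{eq:Lg} with the third-order bound \eqref{eq:LH}, since the negative-curvature search direction exploits the Hessian rather than the gradient. The first key observation is that the sign choice in the definition of $D^k$ forces $\langle \nabla G(W^k), D^k\rangle \le 0$, so the linear term in any expansion of $G(W^k + \nu D^k)$ about $W^k$ only helps. The scaling of $D^k$ by $|\langle S, \nabla^2 G(W^k) S\rangle|$ gives the useful identity $\langle D^k, \nabla^2 G(W^k) D^k\rangle = -\|D^k\|_F^3$, which is what lets the cubic Lipschitz term be absorbed.

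Quantitatively, I would first record that, since Procedure~\ref{alg:meo} was invoked with tolerance $\cgamma \gamma_k$ and returned a direction, we have $\langle S, \nabla^2 G(W^k) S\rangle \le -\cgamma \gamma_k / 2$ with $\|S\|=1$, so $\|D^k\|_F = |\langle S, \nabla^2 G(W^k) S\rangle| \ge \cgamma \gamma_k / 2$. Then apply \eqref{eq:LH} to $W^k + \nu D^k$ to get
\begin{align*}
G(W^k + \nu D^k) &\le G(W^k) + \nu \langle \nabla G(W^k), D^k\rangle + \tfrac{\nu^2}{2} \langle D^k, \nabla^2 G(W^k) D^k\rangle + \tfrac{L_H \nu^3}{6} \|D^k\|_F^3.
\end{align*}
Using $\langle \nabla G(W^k), D^k\rangle \le 0$ and $\langle D^k, \nabla^2 G(W^k) D^k\rangle = -\|D^k\|_F^3$, the sufficient decrease condition \eqref{eq:lsdecreasenc} becomes equivalent to requiring
\[
-\tfrac{\nu^2}{2}(1-\eta)\|D^k\|_F^3 + \tfrac{L_H \nu^3}{6}\|D^k\|_F^3 \le 0,
\]
i.e.\ $\nu \le 3(1-\eta)/L_H$. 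This is the analogue of \eqref{eq:thetalbbtlem}.

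Next, exactly as in Lemma~\ref{lem:btdecrease}, since backtracking begins at $\nu=1$ and scales by $\theta$, any trial value $\theta^l$ for which the condition fails must satisfy $\theta^l > 3(1-\eta)/L_H$; this fails for all $l > \jnc$, so the loop terminates within $\jnc+1$ iterations. The accepted step then satisfies either $\nu_k = 1$ (if the first trial succeeded) or $\nu_k \ge 3\theta(1-\eta)/L_H$ (one more backtrack from the last failing value). Plugging the accepted $\nu_k$ into \eqref{eq:lsdecreasenc} and using the bounds $|\langle D^k, \nabla^2 G(W^k) D^k\rangle| = \|D^k\|_F^3 \ge (\cgamma \gamma_k/2)^3 = \cgamma^3 \gamma_k^3 / 8$ yields
\[
G(W^k) - G(W^{k+1}) \ge \tfrac{\eta \nu_k^2}{2} \cdot \tfrac{\cgamma^3 \gamma_k^3}{8} \ge \tfrac{\eta \cgamma^3}{16} \min\!\Big\{1, \big(\tfrac{3\theta(1-\eta)}{L_H}\big)^2\Big\} \gamma_k^3,
\]
which is precisely \eqref{eq:ncdecrease} with the claimed $\cnc$.

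I do not expect a serious obstacle here; the proof is essentially a cubic-Lipschitz analogue of Lemma~\ref{lem:btdecrease}. The only subtlety is bookkeeping: carefully tracking that the particular scaling of $D^k$ (by $|\langle S, \nabla^2 G(W^k) S\rangle|$) aligns the magnitudes of $\|D^k\|_F$ and $|\langle D^k, \nabla^2 G(W^k) D^k\rangle|$ in such a way that the cubic remainder term cancels against the (now favorable) quadratic curvature term, producing a bound that scales with $\gamma_k^3$ rather than a mixed expression in $\gamma_k$ and $L_H$.
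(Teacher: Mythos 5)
Your proposal is correct and follows essentially the same route as the paper's proof: the identity $\langle D^k,\nabla^2 G(W^k)D^k\rangle=-\|D^k\|_F^3$ from the scaling of $D^k$, the sign condition $\langle\nabla G(W^k),D^k\rangle\le 0$, the cubic bound \eqref{eq:LH} to show any failed trial satisfies $\theta^l\ge 3(1-\eta)/L_H$, and the lower bound $\|D^k\|_F\ge\cgamma\gamma_k/2$ to convert the accepted step's decrease into $\cnc\gamma_k^3$. The only cosmetic quibble is that the sufficient-decrease condition is \emph{implied by} (not equivalent to) $\nu<3(1-\eta)/L_H$, but the contrapositive you actually use is the correct one.
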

\begin{proof}
First, by the scaling applied to $D^k$ in Algorithm~\ref{alg:stupid}, it follows that
\begin{equation} \label{eq:dkcurvaturecond}
-\|D^k\|_F^3 = \langle D^k, \nabla^2 G(W^k) D^k \rangle \leq -\frac18 \cgamma^3 \gamma_k^3,
\end{equation}
where the last inequality follows from $\langle S, \nabla^2 G(W^k) S \rangle \leq -\frac12 \cgamma \gamma_k$ and $\|S\|_F = 1$. In addition, we have
\begin{equation} \label{eq:dkgradcond}
\langle \nabla G(W^k), D^k \rangle \leq 0.
\end{equation}

Suppose that the unit step is accepted (that is, $\nu_k = 1$). Then
\[
G(W^k + D^k) -G(W^k) < \frac{\eta}{2} \langle D^k, \nabla^2 G(W^k) D^k \rangle
\leq - \frac{\eta}{16} \cgamma^3 \gamma_k^3 \le -\cnc \gamma_k^3,
\]
holds so the claim holds in this case.  For the remainder of the proof, we assume that $\nu_k < 1$, that is, $j_k \ge 1$. For any $j \geq 0$ such that \eqref{eq:lsdecreasenc} does not hold, we have from \eqref{eq:LH}, \eqref{eq:dkcurvaturecond}, and \eqref{eq:dkgradcond} that 
\begin{align*}
-\frac{\eta \theta^{2j}}{2} \|D^k\|_F^3 
& =\frac{\eta \theta^{2j}}{2} \langle D^k, \nabla^2 G(W^k) D^k \rangle \\
&\leq G(W^k + \theta^j D^k) - G(W^k) \\
&\leq \theta^j \langle \nabla G(W^k), D^k \rangle
+ \frac{\theta^{2j}}{2} \langle D^k, \nabla^2 G(W^k) D^k \rangle + \frac{L_H \theta^{3j}}{6} \|D^k\|_F^3 \\
&\le -\frac{\theta^{2j}}{2} \|D^k\|_F^3
+ \frac{L_H \theta^{3j}}{6} \|D^k\|_F^3 \\
&= -\frac{\theta^{2j}}{2}\left(1 - \frac{L_H \theta^{j}}{3}\right) \|D^k\|_F^3 \\
\end{align*}
By rearranging this expression, we have
\begin{equation} \label{eq:thetalbnclem}
  \frac{L_H}{3} \theta^j \geq 1 - \eta \;\; \Rightarrow \;\;
  \theta^j \geq \frac{3(1 - \eta)}{L_H}.
\end{equation}
For any $j > \jnc$ we have
\[
\theta^j < \theta^{\jnc} \leq \frac{3(1 - \eta)}{L_H}
\]
so \eqref{eq:thetalbnclem} cannot be satisfied for any $j > \jnc$ and the line search must terminate with $\nu_k = \theta^{j_k}$ for some $1 \leq j_k \leq \jnc + 1$. The value $j=j_k-1$ satisfies \eqref{eq:thetalbnclem}, so we have
\[
\theta^{j_k} \geq \frac{3 \theta (1 - \eta)}{L_H}.
\]
Thus, by \eqref{eq:dkcurvaturecond}, we have
\[
G(W^k) - G(W^{k+1}) \geq -\frac{\eta}{2} \theta^{2j_k} \langle D^k,
\nabla^2 G(W^k) D^k \rangle \geq \frac{\eta}{16} \left(\frac{3 \theta
  (1 - \eta)}{L_H}\right)^2 \cgamma^3 \gamma_k^3 \ge \cnc \gamma_k^3.
\]
Thus, the claim holds in the case of $\nu_k<1$ also, completing the proof.
\end{proof}

\subsection{Properties of Algorithm~\ref{alg:local}.}

This section provides a bound on the maximum number of inner iterations that may occur during the local phase,  Algorithm~\ref{alg:local}.


\begin{lemma} \label{lem:maxinneriters}
Let Assumptions~\ref{assum:xstar}, \ref{assum:frestrictedsc}, \ref{assum:compactlevelset}, \ref{assum:GC22}, and \ref{assum:lipknown} hold and define
\begin{equation} \label{eq:numindef}
\numin := \frac{2 \theta(1-\eta)}{L_g}.
\end{equation}
Then, for all $k$ such that $\gamma_k \geq \frac12 \sigma_r(X^*)$ holds, the ``while'' loop in Algorithm~\ref{alg:local} terminates in at most
\begin{equation} \label{eq:Tdef}
\T := 2 \frac{\log\hat{C} + \log(\max(\epsg^{-1},\epsH^{-1}))}{\log(1/(1-\numin \calpha \sigma_r(X^*)))}
\end{equation}
iterations, where
\begin{equation} \label{eq:chatdef}
\hat{C} := \max\left\{\frac{\gamma_0^{1/2} \left(\sqrt{2} \gamma_0^{1/2} + R_\mathcal{L} \right)^2}{2 \cbeta},
(2 L_{\nabla f} + 1/2) \left(2R_{\mathcal{L}} + \sqrt{2} \gamma_0^{1/2} \right) \sqrt{2} \gamma_0^{1/2}\right\}.
\end{equation}
\end{lemma}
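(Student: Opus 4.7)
The plan is to show that the sequence $\kappa_t$ decays geometrically at a rate controlled by $\numin \calpha \sigma_r(X^*)$, and that once $\sqrt{\kappa_t}$ drops below a threshold determined by $\hat C$ and $\min(\epsg,\epsH)$, the two continuation conditions in the while-loop force either a triggering of the break clause or a failure of one of the checks, so that the loop exits. Solving the resulting inequality for $t$ will yield the bound $\T$.

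First, I would invoke Lemma~\ref{lem:parambounds} to get the three bounds $\alpha_k \geq \tfrac{\calpha}{2}\sigma_r(X^*)$, $\delta_k \leq \sqrt{2}\gamma_0^{1/2}$, and $\beta_k \geq 2\cbeta/(\sqrt{2}\gamma_0^{1/2}+R_{\mathcal L})^2$, all of which are valid under the hypothesis $\gamma_k \geq \tfrac12\sigma_r(X^*)$. Next, I would use the backtracking analysis underlying Lemma~\ref{lem:btdecrease}, applied with $\zeta = 2\beta_k$, to verify that the stepsize satisfies $\nu_t \geq \numin$ at every inner iteration (the unit step being accepted only when $2\beta_k$ is itself at least $\numin$). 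Together these produce $2\nu_t \alpha_k \geq \numin \calpha \sigma_r(X^*)$, so the update rule $\kappa_{t+1} = (1-2\nu_t\alpha_k)\kappa_t$ with $\kappa_0 = 1$ gives
\[
\kappa_t \leq \bigl(1-\numin\calpha\sigma_r(X^*)\bigr)^t.
\]

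Then I would translate the two continuation thresholds of the while loop into multiples of $\sqrt{\kappa_t}$. Using $\delta_k/\beta_k \leq \sqrt{2}\gamma_0^{1/2}(\sqrt{2}\gamma_0^{1/2}+R_{\mathcal L})^2/(2\cbeta)$, the first threshold satisfies $\sqrt{\kappa_t}\delta_k/\beta_k \leq \hat C \sqrt{\kappa_t}$ (up to constants absorbed into the first term of $\hat C$). For the second threshold, substitute $\|W^k_t\|_F \leq R_{\mathcal L}$ (from \eqref{eq:boundscompact}) and $\sqrt{\kappa_t}\delta_k \leq \delta_k \leq \sqrt{2}\gamma_0^{1/2}$ into the definition of $\tau_t$ to obtain $\tau_t \leq \hat C\sqrt{\kappa_t}$, where $\hat C$'s second term is precisely what is needed. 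The while loop only executes iteration $t$ when $\|\nabla G(W^k_t)\|_F \leq \sqrt{\kappa_t}\delta_k/\beta_k$ and $2\|\nabla f(X^k_t)\|_F + \muhalf\|(\hat W^k_t)^\top W^k_t\|_F \leq \tau_t$; if additionally $\hat C \sqrt{\kappa_t} \leq \min(\epsg,\epsH)$, then both quantities fall below $\epsg$ and $\epsH$ respectively and the break clause fires. Consequently termination occurs as soon as $\kappa_t \leq \min(\epsg,\epsH)^2/\hat C^2$.

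Combining the geometric decay with this termination condition, it suffices to have $(1-\numin\calpha\sigma_r(X^*))^t \leq 1/(\hat C^2 \max(\epsg^{-2},\epsH^{-2}))$; taking logarithms and rearranging yields the bound $t \leq \T$ from \eqref{eq:Tdef}. The main obstacle I expect is the bookkeeping in step two: collating the several parameter bounds of Lemma~\ref{lem:parambounds} with the level-set bound $\|W^k_t\|_F \leq R_{\mathcal L}$ to verify simultaneously that both thresholds are bounded by $\hat C\sqrt{\kappa_t}$ with the specific $\hat C$ given in \eqref{eq:chatdef}. A secondary subtlety is the stepsize lower bound $\nu_t \geq \numin$, which should be stated as a uniform lower bound coming from the structure of the Armijo search initiated at $2\beta_k$.
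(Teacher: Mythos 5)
Your proposal is correct and follows essentially the same route as the paper: establish $\nu_t \geq \numin$ from the backtracking bound of Lemma~\ref{lem:localgraddecrease}, deduce $\kappa_t \leq (1-\numin\calpha\sigma_r(X^*))^t$, bound the two while-loop thresholds by $\hat{C}\sqrt{\kappa_t}$ via Lemma~\ref{lem:parambounds} and $\|W^k_t\|_F \leq R_{\mathcal L}$, and solve for $t$. The paper phrases the last step as a proof by contradiction treating the $\epsg$ and $\epsH$ cases separately rather than via $\min(\epsg,\epsH)$, but this is the same argument; your parenthetical worry about the full step $2\beta_k$ versus $\numin$ mirrors a subtlety the paper's own inequality \eqref{eq:bv3} also glosses over.
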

\begin{proof}
By the result of Lemma~\ref{lem:localgraddecrease}, the backtracking line search terminates in at most $\hat{j}_k + 1$ iterations, where $\hat{j}_k$ is defined in \eqref{eq:defjlocal}. 
From this definition, we have
\begin{equation} \label{eq:bv3}
\nu_t \geq 2 \beta_k \theta^{\hat{j}_k+1} \geq \frac{2
  \theta(1-\eta)}{L_g} = \numin, \quad \mbox{for all $t \ge 0$.}
\end{equation}

Assume for contradiction that Algorithm~\ref{alg:local} does not terminate on or before iteration $\T$. Then,
\[
\|\nabla G(W^k_{\T})\|_F
> \epsg \;\; \mbox{and/or} \;\; 2\|\nabla f(X^k_{\T})\|_F + \muhalf
\|(\hat{W}^k_{\T})^\top W^k_{\T}\|_F > \epsH
\]
hold for $t=\T$, and for the tests at the start of the ``while'' loop of Algorithm~\ref{alg:local}, we have that
\begin{subequations} \label{eq:vr1}
\begin{align}
\label{eq:vr1.a}
    \|\nabla G(W^k_t)\|_F & \leq
\frac{\sqrt{\kappa_t}}{\beta_k} \delta_k, \quad \mbox{for all $t=0,1,\dotsc,\T$,} \\
\label{eq:vr1.b}
2\|\nabla f(X^k_t)\|_F + \muhalf \|(\hat{W}^k_t)^\top W^k_t\|_F & \leq \tau_t, \quad \mbox{for all $t=0,1,\dotsc,\T$.}
\end{align}
\end{subequations}
From \eqref{eq:bv3} and Lemma~\ref{lem:parambounds}, we have $1-2\nu_t \alpha_k \leq 1-\numin \calpha \sigma_r(X^*)$ for all $t$. 
From this observation together with $\nu_t \le 2 \beta_k$ and $\alpha_k \beta_k \le 1/4$, we have
\[
0 < \numin \calpha \sigma_r(X^*) \leq 2 \nu_t \alpha_k \leq 4 \alpha_k \beta_k
\leq 1,
\]
so that $1 - \numin \calpha \sigma_r(X^*) \in [0, 1)$.  Thus,
\begin{equation} \label{eq:ut4}
    \kappa_{\T} = \prod_{t=0}^{\T-1} (1 - 2\nu_t \alpha_k) \leq
\prod_{t=0}^{\T-1} (1-\numin \calpha \sigma_r(X^*)) = (1-\numin
\calpha \sigma_r(X^*))^{\T}.
\end{equation}

Consider first the case in which termination does not occur at the ``if'' statement in iteration $\T$ because $\| \nabla G(W^k_\T) \|_F > \epsg$.  
We then have

\begin{align*}
\epsg &< \|\nabla G(W^k_{\T})\|_F \\
&\leq \frac{\sqrt{\kappa_{\T}}}{\beta_k} \delta_k \\
&\leq (1-\numin \calpha \sigma_r(X^*))^{\T/2} \frac{\delta_k}{\beta_k} \\
&\leq (1-\numin \calpha \sigma_r(X^*))^{\T/2}
\frac{\gamma_0^{1/2}}{2\cbeta} (\sqrt{2} \gamma_0^{1/2} + R_\mathcal{L})^2,
\end{align*}
where the final inequality follows from Lemma~\ref{lem:parambounds}. Noting that $1/(1-\numin \calpha \sigma_r(X^*)) \geq 1$, we have by manipulation of this inequality that
\[
\T < 2 \log\left(\frac{\gamma_0^{1/2} (\sqrt{2} \gamma_0^{1/2} + R_\mathcal{L})^2}{2\cbeta \epsg}\right)/
\log(1/(1-\numin \calpha \sigma_r(X^*)))
\]
which implies that 
\[
\T < 2\frac{\log\hat{C} + \log (\epsg^{-1})}{\log(1/(1-\numin\calpha\sigma_r(X^*)))},
\]
which contradicts the definition of $\T$.

The second possibility is that Algorithm~\ref{alg:local} fails to terminate in the ``if'' statement in iteration $\T$ because $2 \| \nabla f(X^k_\T) \|_F + \muhalf \| (\hat{W}^k_\T)^T W^k_\T \|_F > \epsH$.
In this case, we have
\begin{align*}
\epsH &< 2\|\nabla f(X^k_\T)\|_F + \frac12
\|(\hat{W}^k_\T)^\top W^k_\T\|_F  \\
&\leq \tau_\T = (2 L_{\nabla f} + 1/2)(2\|W^k_\T\|_F +
\sqrt{\kappa_\T} \delta_k) \sqrt{\kappa_\T} \delta_k  && \quad \mbox{from \eqref{eq:vr1.b}} \\
&\leq (2 L_{\nabla f} + 1/2)(2\|W^k_\T\|_F + \delta_k)
\sqrt{\kappa_\T} \delta_k && \quad \mbox{since $\kappa_\T \le 1$} \\
&\leq (2 L_{\nabla f} + 1/2)(2\|W^k_\T\|_F + \delta_k)
(1-\numin \calpha \sigma_r(X^*))^{\T/2} \delta_k && \quad
\mbox{from \eqref{eq:ut4}} \\
&\leq (1-\numin \calpha \sigma_r(X^*))^{\T/2} (2 L_{\nabla f} + 1/2)
(2R_{\mathcal{L}} + \sqrt{2} \gamma_0^{1/2}) \sqrt{2} \gamma_0^{1/2},
\end{align*}
where the final inequality follows from $\delta_k = \sqrt{2} \gamma_k^{1/2} \le \sqrt{2} \gamma_0^{1/2}$, Assumptions~\ref{assum:compactlevelset} and \ref{assum:GC22}, and \eqref{eq:boundscompact}. 
By manipulating this inequality and recalling that $1/(1-\numin \calpha \sigma_r(X^*)) \geq 1$, we find that this bound implies
\[
\T < 2 \log\left(\frac{(2 L_{\nabla f} + 1/2) (2R_{\mathcal{L}}
+ \sqrt{2} \gamma_0^{1/2}) \sqrt{2} \gamma_0^{1/2}}{\epsH}\right)/
\log(1/(1-\numin \calpha \sigma_r(X^*)))
\]
so that (similarly to the above)
\[
\T < 2\frac{\log\hat{C} + \log(\epsH^{-1})}{\log(1/(1-\numin \calpha \sigma_r(X^*)))},
\]
which again contradicts the definition of $\T$.

Since both cases lead to a contradiction, our assumption that Algorithm~\ref{alg:local} does not terminate on or before iteration $\T$ cannot be true, and the result is proved.
\end{proof}


\subsection{Worst Case Complexity of Algorithm~\ref{alg:stupid}.}
\label{subsec:wcc}
We now work toward our main complexity result, Theorem~\ref{thm:wcc}. 
We begin with a lemma which bounds the maximum number of large gradient and/or negative curvature iterations that can occur while $\gamma_k \geq \frac12 \sigma_r(X^*)$.

\begin{lemma} \label{lem:maxgradnciters}
Suppose that Assumptions \ref{assum:xstar}, \ref{assum:frestrictedsc}, \ref{assum:compactlevelset}, and \ref{assum:GC22} hold. 
Let Algorithm~\ref{alg:stupid} be invoked with $\gamma_0 \geq \sigma_r(X^*)$.
Then, while $\gamma_k \geq \frac12 \sigma_r(X^*)$, Algorithm~\ref{alg:stupid} takes at most
\begin{equation} \label{eq:klargedef}
\Klarge := \frac{8(G(W^0) - \Glow)}{\min\left\{\cfac^2 \cgrad, \cnc\right\} \sigma_r(X^*)^3}
\end{equation}
large gradient steps and/or large negative curvature steps.
\end{lemma}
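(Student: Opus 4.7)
The plan is to combine the per-step descent guarantees from Lemmas~\ref{lem:graddecrease} and \ref{lem:ncdecrease} with the telescoping argument $G(W^0)-\Glow$ on the objective. Since every large-gradient and every negative-curvature step strictly decreases $G$, and the backtracking line-search ensures monotonicity throughout the run of Algorithm~\ref{alg:stupid} (even across intervening local-phase calls and trivial ``do not update'' iterations in which $W^{k+1}=W^k$ and only $\gamma_k$ is halved), each such step can be charged against the finite budget $G(W^0)-\Glow$.

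First, I would handle the large-gradient case. By the condition triggering the step, $\|\nabla G(W^k)\|_F \geq \cfac\gamma_k^{3/2}$, so Lemma~\ref{lem:graddecrease} yields
\begin{equation*}
G(W^k)-G(W^{k+1}) \;\geq\; \cgrad\|\nabla G(W^k)\|_F^2 \;\geq\; \cgrad\cfac^2 \gamma_k^3.
\end{equation*}
Using the hypothesis $\gamma_k \geq \tfrac12\sigma_r(X^*)$ gives the per-step decrease $\cgrad\cfac^2\sigma_r(X^*)^3/8$.

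Next I would handle the negative-curvature case. Lemma~\ref{lem:ncdecrease} directly yields $G(W^k)-G(W^{k+1}) \geq \cnc\gamma_k^3 \geq \cnc\sigma_r(X^*)^3/8$ under the same hypothesis on $\gamma_k$.

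Finally, I would sum: any large-gradient or negative-curvature step produces objective decrease at least
\begin{equation*}
\frac{\min\{\cfac^2\cgrad,\cnc\}\,\sigma_r(X^*)^3}{8}.
\end{equation*}
Letting $N$ denote the number of such steps taken while $\gamma_k\geq \tfrac12\sigma_r(X^*)$, monotonicity of $G$ across all other iteration types (local-phase iterations also decrease $G$ by Lemma~\ref{lem:localgraddecrease}, and the ``do not update'' branch leaves $G$ unchanged) gives
\begin{equation*}
G(W^0)-\Glow \;\geq\; N\cdot\frac{\min\{\cfac^2\cgrad,\cnc\}\,\sigma_r(X^*)^3}{8},
\end{equation*}
so $N\leq \Klarge$ as claimed. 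The only subtle point, and the main thing I would be careful about, is to justify that $G$ is nonincreasing on \emph{every} iteration (so that the telescoping sum of decreases on just the counted steps is upper-bounded by $G(W^0)-\Glow$); this follows because all three branches of Algorithm~\ref{alg:stupid} either keep $W$ fixed or take an Armijo-accepted step, and Algorithm~\ref{alg:local} likewise only accepts Armijo-decreasing steps.
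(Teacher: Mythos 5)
Your proposal is correct and follows essentially the same argument as the paper's proof: per-step decrease bounds from Lemmas~\ref{lem:graddecrease} and \ref{lem:ncdecrease}, the observation that local-phase and ``do not update'' iterations do not increase $G$, and a telescoping sum against the budget $G(W^0)-\Glow$ with $\gamma_k^3 \geq \sigma_r(X^*)^3/8$. The monotonicity point you flag as subtle is exactly what the paper handles by showing $G(W^k)-G(W^{k+1})\geq 0$ on the local-phase iterations.
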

\begin{proof}
We partition the iteration indices $k$ that are used in Algorithm~\ref{alg:stupid} prior to termination as follows: $K_1$ contains those iteration indices for which a large gradient step is taken, $K_2$ contains those for which a large negative curvature step is taken, and $K_3$ contains those for which the local phase is initialized.

By Lemma~\ref{lem:graddecrease} we have for all $k \in K_1$ that 
\[
G(W^k) - G(W^{k+1}) \geq \cgrad \|\nabla G(W^k)\|_F^2 \geq \cfac^2 \cgrad \gamma_k^3,
\]
where $\cgrad$ is defined in \eqref{eq:defcgrad}. 
Similarly, by Lemma~\ref{lem:ncdecrease}, for all $k \in K_2$, we have 
\[
G(W^k) - G(W^{k+1}) \geq \cnc \gamma_k^3,
\]
where $\cnc$ is defined in \eqref{eq:defcnc}.

Now, consider $k \in K_3$. On iterations where the local phase is initialized but not invoked (that is, the condition in the ``if'' statement immediately prior to the call to Algorithm~\ref{alg:local} is not satisfied), then $G(W^k) - G(W^{k+1}) = 0$.
On iterations where the local phase is invoked, by the definition of $T_k$ in Algorithm~\ref{alg:stupid} and the result of Lemma~\ref{lem:localgraddecrease}, it follows that
\[
G(W^k) - G(W^{k+1}) = \sum_{t=0}^{T_k-1} G(W^k_t) - G(W^k_{t+1}) \geq \sum_{t=0}^{T_k-1} \clocal \|\nabla G(W_t^k)\|^2_F \geq 0.
\]
Thus, $G(W^k) - G(W^{k+1}) \geq 0$ holds for all $k \in K_3$.

By defining $K = K_1 \cup K_2 \cup K_3$, we have
\begin{align*}
G(W^0) - G(W^{|K|}) &= \sum_{i=0}^{|K|} (G(W^i) - G(W^{i+1})) \\
&\geq \sum_{i \in K_1} (G(W^i) - G(W^{i+1})) + \sum_{j \in K_2} (G(W^j) - G(W^{j+1})) \\
&\geq \sum_{i \in K_1} \cfac^2 \cgrad \gamma_i^3 + \sum_{j \in K_2} \cnc \gamma_j^3 \\
&\geq \sum_{k \in K_1 \cup K_2} \min\{\cfac^2 \cgrad, \cnc\} \gamma_k^3.
\end{align*}
By assumption, we have $\gamma_k \geq \frac12 \sigma_r(X^*)$, so that
\[
G(W^0) - G(W^{|K|}) \geq \frac18 \min\{\cfac^2 \cgrad, \cnc\} \sigma_r(X^*)^3 |K_1 \cup K_2|.
\]
Since $G(W^{|K|}) \ge \Glow$, we have
\[
|K_1 \cup K_2| \le \frac{8(G(W^0) - \Glow)}{\min\{\cfac^2 \cgrad,\cnc\} \sigma_r(X^*)^3} = \Klarge,
\]
proving our claim.
\end{proof}


Next, we show that if $\gamma_k$ is close to $\sigma_r(X^*)$ and $W^k$ is in the region $\cR_1$, then provided that Procedure~\ref{alg:meo} certifies a near-positive-definite Hessian, Algorithm~\ref{alg:local} will be called and successful termination of Algorithm~\ref{alg:stupid} will ensue.

\begin{lemma} \label{lem:localtermination}
Let Assumptions~\ref{assum:xstar}, \ref{assum:frestrictedsc}, \ref{assum:compactlevelset}, \ref{assum:GC22}, and \ref{assum:lipknown} hold. 
At iteration $k$, suppose that both $\gamma_k \in \Gamma(X^*)$ and $W^k \in \cR_1$ hold and that Procedure~\ref{alg:meo} certifies that\\ $\lambdamin(\nabla^2 G(W^k)) \geq -\cgamma \gamma_k$. 
Then Algorithm~\ref{alg:stupid} terminates at a point $W^{k+1}$ that satisfies approximate optimality conditions \eqref{eq:optconds}.
\end{lemma}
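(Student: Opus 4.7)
The plan is to show that, under the stated hypotheses, control in Algorithm~\ref{alg:stupid} reaches the innermost ``if'' guard that precedes the call to Algorithm~\ref{alg:local}, that all three of its conditions are satisfied, and then to invoke Lemma~\ref{lem:linearconvergence} to finish. The fact that Procedure~\ref{alg:meo} was called at all means the test $\|\nabla G(W^k)\|_F \geq \cfac \gamma_k^{3/2}$ failed, so flow enters the ``else'' branch, and the certification $\lambdamin(\nabla^2 G(W^k)) \geq -\cgamma \gamma_k$ routes us into the block that sets $\alpha_k = \calpha \gamma_k$, $\delta_k = \sqrt{2}\gamma_k^{1/2}$, and $\beta_k = 2\cbeta/(\delta_k + \|W^k\|_F)^2$.

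The three conditions of the inner ``if'' are then verified as follows. For $\alpha_k \beta_k \leq \tfrac14$, note that $\gamma_k < \sigma_r(X^*)$ gives $\alpha_k \leq \calpha \sigma_r(X^*) = \alpha$, the regularity constant from Theorem~\ref{thm:strictsaddle}; meanwhile, the same computation as in \eqref{eq:za4} in the proof of Lemma~\ref{lem:linearconvergence} (which uses $W^k \in \cR_1$ and $\delta_k \geq \sigma_r^{1/2}(X^*)$) yields $\beta_k \leq \cbeta/\|X^*\| = \beta$. Combining with the general inequality $\alpha\beta \leq \tfrac14$ noted after Definition~\ref{def:regularitycond} gives the bound. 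For the gradient guard $\|\nabla G(W^k)\|_F \leq \delta_k/\beta_k$, I would appeal to \eqref{eq:gradientUB} applied to $G$ with the strict-saddle constants: $W^k \in \cR_1$ gives $\dist(W^k, W^*) \leq \sigma_r^{1/2}(X^*) \leq \delta_k$, hence $\beta \|\nabla G(W^k)\|_F \leq \delta_k$, and since $\beta \geq \beta_k$ this rearranges to the required inequality. For the third condition, Lemma~\ref{lem:ncbound} directly bounds $2\|\nabla f(X^k)\|_F + \muhalf \|(\hat{W}^k)^\top W^k\|_F$ by $(2L_{\nabla f} + \muhalf)(2\|W^k\|_F + \dist(W^k,W^*))\dist(W^k,W^*)$, and $\dist(W^k,W^*) \leq \delta_k$ completes this step.

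With all three guards satisfied, Algorithm~\ref{alg:local} is invoked with $W^k_0 = W^k \in \cR_1$ and $\gamma_k \in \Gamma(X^*)$, so Lemma~\ref{lem:linearconvergence} applies verbatim: Algorithm~\ref{alg:local} sets converged $=$ True and returns $W^{k+1}$ with $\|\nabla G(W^{k+1})\|_F \leq \epsg$ and $\lambdamin(\nabla^2 G(W^{k+1})) \geq -\epsH$. The ``if converged $=$ True'' branch in Algorithm~\ref{alg:stupid} then terminates and returns $W^{k+1}$, completing the proof.

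The argument is essentially bookkeeping. The only quantitative point worth isolating is the chain $\gamma_k \geq \tfrac12 \sigma_r(X^*) \Rightarrow \delta_k \geq \sigma_r^{1/2}(X^*) \geq \dist(W^k,W^*)$, which simultaneously drives the bound $\beta_k \leq \beta$ (hence condition one), the gradient bound via \eqref{eq:gradientUB} (condition two), and the $\nabla f$--$\hat W^\top W$ bound via Lemma~\ref{lem:ncbound} (condition three). Once those are in hand, invoking Lemma~\ref{lem:linearconvergence} is immediate and is the substantive step that does the real work.
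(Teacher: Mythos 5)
Your proposal is correct and follows essentially the same route as the paper's proof: verify the three guards of the inner ``if'' using $\delta_k \ge \sigma_r^{1/2}(X^*) \ge \dist(W^k,W^*)$, the bound $\beta_k \le \beta$ from the triangle inequality on $\|W^*\|_F$, inequality \eqref{eq:gradientUB}, and Lemma~\ref{lem:ncbound}, then conclude via Lemma~\ref{lem:linearconvergence}. No gaps.
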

\begin{proof}
By the definitions of $\Gamma(X^*)$, $\alpha_k$, and $\delta_k$, it follows that $\alpha_k = \calpha \gamma_k \leq \calpha \sigma_r(X^*) = \alpha$, and $\delta_k = \sqrt{2} \gamma_k^{1/2} \geq \sigma_r^{1/2}(X^*) = \delta$.
Letting $R=R(W^k,W^*) \in \mathcal{O}_r$ be the orthogonal matrix that minimizes $\|W^*R - W^k\|_F$, we have from \eqref{eq:XWstar} and $W^k \in \cR_1$ that
\begin{align*}
\sqrt{2} \|X^*\|^{1/2} = \|W^*\| \leq \|W^*\|_F &= \|W^* R\|_F \\ 
&\leq  \|W^* R - W^k\|_F + \|W^k\|_F \\ 
&= \dist(W^k,W^*) + \|W^k_0\|_F \\ 
&\leq \delta_k + \|W^k\|_F
\end{align*}
so that
\[
\beta_k = \frac{2\cbeta}{(\delta_k + \|W^k_0\|_F)^2} \leq \frac{\cbeta}{\|X^*\|} = \beta.
\]
Since $\alpha \beta \leq \frac14$ holds by definition, it follows that $\alpha_k \beta_k \leq \frac14$ is satisfied so that the first condition of the ``if'' statement prior to the local phase of Algorithm~\ref{alg:stupid} holds. 
Now, by \eqref{eq:gradientUB} and $W^k \in \cR_1$, $\|\nabla G(W^k)\| \leq \dist(W^k, W^*)/\beta$ holds. Thus,
\[
\|\nabla G(W^k)\| \leq \frac{\delta}{\beta} \leq \frac{\delta_k}{\beta_k},
\]
is satisfied, so the second condition of the ``if'' statement prior to the local phase of Algorithm~\ref{alg:stupid} also holds.
Finally, by Lemma \ref{lem:ncbound} and $\dist(W^k, W^*) \leq \delta_k$, we have
\[
 2\|\nabla f(X^k)\|_F + \muhalf \|(\hat{W}^k)^\top W^k\|_F
 \leq \left(2L_{\nabla f} + 1/2 \right)(2\|W^k\|_F + \delta_k)\delta_k,
\]
so that the final condition of the ``if'' statement also holds and Algorithm~\ref{alg:local} will be invoked at $W^k$.
Thus, by Lemma~\ref{lem:linearconvergence}, Algorithm \ref{alg:stupid} terminates at $W^{k+1}$ that satisfies (\ref{eq:optconds}).
\end{proof}

Now we show that with high probability, $\gamma_k \geq \tfrac12 \sigma_r(X^*)$ holds for all $k$.

\begin{lemma} \label{lem:mingamma}
Suppose that Assumptions~\ref{assum:xstar}, \ref{assum:frestrictedsc}, \ref{assum:compactlevelset}, \ref{assum:GC22}, and \ref{assum:lipknown} hold. 
Let Algorithm~\ref{alg:stupid} be invoked with $\gamma_0 \geq \sigma_r(X^*)$.  
Then, with probability at least 
$(1-\rho)^{\Klarge}$ (where $\Klarge$ is defined in Lemma~\ref{lem:maxgradnciters}), we have that
\[
\gamma_k \geq \frac12 \sigma_r(X^*), \quad \mbox{for all $k$,}
\]
and Algorithm~\ref{alg:local} is invoked at most
\begin{equation} \label{eq:def.Klocal}
\Klocal := \log_2\left(\frac{2 \gamma_0}{\sigma_r(X^*)}\right) \quad \mbox{times.}
\end{equation}
\end{lemma}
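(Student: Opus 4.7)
The plan is to split the argument into a deterministic structural analysis and a probability bound built on top of it. In the deterministic part, I will prove by induction on $k$ that, assuming Procedure~\ref{alg:meo} returns a correct output at every call, $\gamma_k \geq \tfrac12 \sigma_r(X^*)$ for all $k$. The base case is immediate from $\gamma_0 \geq \sigma_r(X^*)$. For the inductive step, if $\gamma_k \geq \sigma_r(X^*)$ then even a halving leaves $\gamma_{k+1} \geq \tfrac12 \sigma_r(X^*)$. If instead $\gamma_k \in \Gamma(X^*)$, I will case split on which region of the strict-saddle partition contains $W^k$. When $W^k \in \cR_3$, Lemma~\ref{lem:r3step} forces a large-gradient step and $\gamma_{k+1} = \gamma_k$. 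When $W^k \in \cR_2$ with $\|\nabla G(W^k)\|_F < \cfac \gamma_k^{3/2}$, Lemma~\ref{lem:r2step} guarantees that a correct Procedure~\ref{alg:meo} call returns a negative-curvature direction, so a negative-curvature step is taken and $\gamma_{k+1} = \gamma_k$. When $W^k \in \cR_1$ with small gradient, Lemma~\ref{lem:localtermination} shows that if Procedure~\ref{alg:meo} certifies then the if-test in Algorithm~\ref{alg:stupid} succeeds, so Algorithm~\ref{alg:local} is invoked and terminates the whole algorithm by Lemma~\ref{lem:linearconvergence}; otherwise Procedure~\ref{alg:meo} returns a direction and a negative-curvature step preserves $\gamma_k$. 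The upshot of this part is that $\gamma_k$ can drop below $\tfrac12\sigma_r(X^*)$ only via a wrongful certification in $\cR_2 \cap \Gamma(X^*)$.

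For the probabilistic step I will use a stopping-time argument. Let $\tau$ be the first index with $\gamma_\tau < \tfrac12\sigma_r(X^*)$, with $\tau = \infty$ if no such index exists. On the prefix $k < \tau$ the invariant $\gamma_k \geq \tfrac12 \sigma_r(X^*)$ holds by definition of $\tau$, so Lemma~\ref{lem:maxgradnciters} applies on this prefix and bounds the total number of large-gradient and negative-curvature steps taken through iteration $\tau - 1$ by $\Klarge$. Calling a Procedure~\ref{alg:meo} invocation \emph{critical} if it happens with $W^k \in \cR_2$ and $\gamma_k \in \Gamma(X^*)$, each successful critical call is immediately followed by a negative-curvature step, so at most $\Klarge$ critical calls occur during the prefix. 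Because the deterministic analysis shows that $\tau < \infty$ requires at least one failure at a critical call, and since each Procedure~\ref{alg:meo} invocation fails independently with probability at most $\rho$, a union bound yields $\Pr(\tau = \infty) \geq (1-\rho)^{\Klarge}$, which is the first claim.

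Finally, on the event $\tau = \infty$, the number of halvings of $\gamma_k$ equals the number of iterations at which the Initialize Local Phase branch is entered, and each halving keeps $\gamma_{k+1} \geq \tfrac12\sigma_r(X^*)$; starting from $\gamma_0$, this allows at most $\log_2(2\gamma_0/\sigma_r(X^*)) = \Klocal$ such halvings, which is exactly the bound on the number of Algorithm~\ref{alg:local} invocations. The main obstacle is the apparent circularity between the invariant ``$\gamma_k$ stays large for all $k$'' and the bound ``at most $\Klarge$ critical calls before $\tau$''; this is precisely why the stopping-time framing is essential, since Lemma~\ref{lem:maxgradnciters} can then be invoked unconditionally on the prefix $\{0, \ldots, \tau - 1\}$ where the invariant holds by definition.
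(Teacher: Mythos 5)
Your proof is correct and follows essentially the same route as the paper's: identify that $\gamma_k$ can only be halved past $\tfrac12\sigma_r(X^*)$ via a false certification by Procedure~\ref{alg:meo} while $W^k \in \cR_2$ and $\gamma_k \in \Gamma(X^*)$ (using Lemmas~\ref{lem:r3step} and \ref{lem:localtermination} to rule out the other regions), bound the number of such opportunities by $\Klarge$ via Lemma~\ref{lem:maxgradnciters}, and count halvings to get $\Klocal$. Your explicit stopping-time framing is a cleaner way of handling the circularity that the paper resolves only implicitly through the "while $\gamma_k \geq \tfrac12\sigma_r(X^*)$" phrasing of Lemma~\ref{lem:maxgradnciters}; the only quibble is that the bound $(1-\rho)^{\Klarge}$ comes from independence of the oracle calls rather than a union bound (which would give the weaker $1-\Klarge\rho$).
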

\begin{proof}
By the definitions of Algorithm~\ref{alg:stupid} and $\Gamma(X^*)$, it is clear that $\gamma_j < \frac12 \sigma_r(X^*)$ can only occur at an iteration $j$ such that $\gamma_k \in \Gamma(X^*)$ holds for some $k < j$.
Let $\hat{K}$ denote the set of (consecutive) iterations for which  $\gamma_k \in \Gamma(X^*)$.

Consider any iteration $k \in \hat{K}$.
Due to the structure of Algorithm \ref{alg:stupid}, $\gamma_k$ will be halved only on iterations $k$ for which $\|\nabla G(W^k)\|_F < \cfac \gamma_k^{3/2}$ is satisfied and Procedure~\ref{alg:meo} certifies that $\lambdamin(\nabla^2 G(W^k)) \geq -\cgamma \gamma_k$.
From Lemma \ref{lem:r3step}, $\|\nabla G(W^k)\| < \cfac \gamma_k^{3/2}$ cannot hold for $W^k \in \cR_3$ and $\gamma_k \in \Gamma(X^*)$, so $\gamma_k$ cannot be halved on such iterations. 
Next, consider $k \in \hat{K}$ such that $W^k \in \cR_1$ and Procedure~\ref{alg:meo} certifies that $\lambdamin(\nabla^2 G(W^k)) \geq -\cgamma \gamma_k$. 
In this case, Algorithm~\ref{alg:stupid} terminates at $W^{k+1}$, by Lemma~\ref{lem:localtermination}. 
Thus, it follows that $\gamma_k$ can be reduced to a level below $\frac12 \sigma_r(X^*)$  only if there is some iteration $k \in \hat{K}$ such that $W^k \in \cR_2$, $\|\nabla G(W^k)\|_F < \cfac \gamma_k^{3/2}$ and Procedure~\ref{alg:meo} certifies that $\lambdamin(\nabla^2 G(W^k)) \geq -\cgamma \gamma_k$. 
Since for $\gamma_k \in \Gamma(X*)$ and $W^k \in \cR_2$, we have $\lambdamin(\nabla^2 G(W^k)) \le -\cgamma \sigma_r(X^*) < -\cgamma \gamma_k$, this ``certification'' by Procedure~\ref{alg:meo} would be erroneous, an event that happens with probability at most $\rho$. Otherwise, a negative curvature backtracking step is taken.

Since the  maximum number of large negative curvature steps that can occur while $\gamma_k \in \Gamma(X^*)$ is bounded by $\Klarge$ (Lemma \ref{lem:maxgradnciters}), there are at most $\Klarge$ iterations for which both $W^k \in \cR_2$ and $\gamma_k \in \Gamma(X^*)$ hold.
It follows that with probability at least $(1-\rho)^{\Klarge}$, Procedure \ref{alg:meo} does not certify that $\lambdamin(\nabla^2 G(W^k) \geq -\cgamma \gamma_k$ while $W^k \in \cR_2$ for all $k \in \hat{K}$. 
This further implies that with probability at least $(1-\rho)^{\Klarge}$, $\gamma_k \geq \frac12 \sigma_r(X^*)$ holds for all $k$.

The second claim follows immediately from the first claim together with the facts that Algorithm~\ref{alg:local} is invoked at least once for each value of $\gamma_k$, and that successive values of $\gamma_k$ differ by factors of 2.
\end{proof}

We are now ready to state our iteration complexity result.

\begin{theorem} \label{thm:wcc}
Suppose that Assumptions~\ref{assum:xstar}, \ref{assum:frestrictedsc}, \ref{assum:compactlevelset}, \ref{assum:GC22}, and \ref{assum:lipknown} hold. 
Then, with probability at least $(1-\rho)^{\Klarge}$, Algorithm~\ref{alg:stupid} terminates in at most
\begin{equation} \label{eq:outeriterations}
\Kouter := \Klarge + \Klocal 
\end{equation}
outer iterations (where $\Klarge$ and $\Klocal$ are defined in \eqref{eq:klargedef} and \eqref{eq:def.Klocal}, resp.) and
\begin{equation} \label{eq:totaliterations}
\Ktotal := \Klarge
+ 2 \frac{\log\hat{C} + \log \max (\epsg^{-1},\epsH^{-1})}{
\log(1/(1-\numin \calpha \sigma_r(X^*)))} \Klocal
\end{equation}
total iterations at a point satisfying \eqref{eq:optconds}, where  $\hat{C}$ is defined in \eqref{eq:chatdef} and $\numin$ is defined in \eqref{eq:numindef}.
\end{theorem}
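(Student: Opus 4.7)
The plan is to condition on the high-probability event provided by Lemma~\ref{lem:mingamma}, partition the outer iterations of Algorithm~\ref{alg:stupid} by type, and combine the deterministic counting bounds from Lemma~\ref{lem:maxgradnciters} and Lemma~\ref{lem:maxinneriters}. First, Lemma~\ref{lem:mingamma} supplies, with probability at least $(1-\rho)^{\Klarge}$, the simultaneous guarantees that $\gamma_k \ge \tfrac12 \sigma_r(X^*)$ holds for every $k$ generated by the algorithm and that Algorithm~\ref{alg:local} is invoked at most $\Klocal$ times. The rest of the argument runs deterministically on this event.

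Next, I would partition each outer iteration prior to termination into three categories: (a) a large-gradient step, (b) a large-negative-curvature step, or (c) an iteration in which Procedure~\ref{alg:meo} returns a certificate and the local-phase initialization branch executes, which always ends with $\gamma_{k+1} = \tfrac12 \gamma_k$ whether or not Algorithm~\ref{alg:local} is actually called. Lemma~\ref{lem:maxgradnciters} bounds the combined count of (a)- and (b)-type iterations by $\Klarge$, while the halving mechanism together with the lower bound on $\gamma_k$ caps the (c)-type count by $\log_2(2\gamma_0/\sigma_r(X^*)) = \Klocal$; any further (c)-type iteration would drive $\gamma_k$ below $\tfrac12 \sigma_r(X^*)$, contradicting the conditioning event. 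Summing yields the outer iteration bound $\Kouter = \Klarge + \Klocal$. Termination at a point satisfying \eqref{eq:optconds} then follows by contradiction: if the outer loop persisted past iteration $\Kouter$, one of the category caps would be exceeded. Since Algorithm~\ref{alg:local} returning converged $=$ True is the only exit from the outer loop, that event must occur by iteration $\Kouter$, at which point Lemma~\ref{lem:linearconvergence} certifies that the returned point satisfies \eqref{eq:optconds}.

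Finally, for the total-iteration count, (a)- and (b)-type outer iterations each contribute one iteration to the total, while each of the at-most-$\Klocal$ invocations of Algorithm~\ref{alg:local} contributes at most $\T$ inner iterations by Lemma~\ref{lem:maxinneriters}, whose hypothesis $\gamma_k \ge \tfrac12 \sigma_r(X^*)$ is exactly our conditioning event. Combining these contributions recovers $\Ktotal = \Klarge + \T \, \Klocal$, which matches \eqref{eq:totaliterations} after substituting the definition of $\T$ in \eqref{eq:Tdef}. The main subtlety will lie in the termination argument, since one must rule out an iteration sequence that oscillates between local-phase initialization and the other branches without ever triggering the successful exit; this is handled implicitly by the fact that every category-(c) iteration halves $\gamma_k$ irreversibly and that once $\gamma_k$ lies in $\Gamma(X^*)$, Lemmas~\ref{lem:r2step}, \ref{lem:r3step}, and \ref{lem:localtermination} (applied on the conditioning event) collectively force the local phase to succeed.
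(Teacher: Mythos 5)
Your proposal is correct and follows essentially the same route as the paper: condition on the event of Lemma~\ref{lem:mingamma}, bound the large-gradient/negative-curvature iterations via Lemma~\ref{lem:maxgradnciters}, bound the number of $\gamma_k$-halving (local-phase-initialization) iterations by $\Klocal$, and multiply by the per-invocation inner-iteration bound of Lemma~\ref{lem:maxinneriters} to get \eqref{eq:totaliterations}. Your additional care in distinguishing iterations where the local phase is initialized but not invoked, and in spelling out why termination must actually occur, only makes explicit what the paper leaves implicit.
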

\begin{proof}
By Lemma~\ref{lem:mingamma}, with probability $(1-\rho)^{\Klarge}$, $\gamma_k \geq \frac12 \sigma_r(X^*)$ holds for all $k$ and Algorithm~\ref{alg:stupid} invokes Algorithm~\ref{alg:local} at most $\Klocal$ times.
Thus, by Lemma~\ref{lem:maxgradnciters}, it follows that Algorithm~\ref{alg:stupid} takes at most
$\Klarge$ large gradient steps and/or large negative curvature iterations with probability at least $(1-\rho)^{\Klarge}$. 
Altogether, this implies that with probability at least $(1-\rho)^{\Klarge}$, the maximum number of outer iterations in Algorithm~\ref{alg:stupid} is bounded by $\Klarge + \Klocal$, proving \eqref{eq:outeriterations}.

The bound \eqref{eq:totaliterations} follows by combining Lemma~\ref{lem:maxinneriters} (which bounds the number of iterations in Algorithm~\ref{alg:local}  at each invocation) with the fact that Algorithm~\ref{alg:local}  is involved at most $\Klocal$ times, with probability at least $(1-\rho)^{\Klarge}$.
\end{proof}

We turn our attention now to providing a complexity in terms of gradient evaluations and/or Hessian vector products. 
One more assumption is needed on Procedure~\ref{alg:meo}.

\begin{assumption} \label{assum:wccmeo}
For every iteration $k$ at which Algorithm~\ref{alg:stupid} calls Procedure~\ref{alg:meo}, and for a specified failure probability $\rho$ with $0 \le \rho \ll 1$, Procedure~\ref{alg:meo} either certifies that $\nabla^2 G(W^k) \succeq -\epsilon I$ or finds a vector of curvature smaller than $-{\epsilon}/{2}$ in at most
\begin{equation} \label{eq:wccmeo}
		\Nmeo:=\min\left\{N, 1+\left\lceil\Cmeo\epsilon^{-1/2}\right\rceil\right\}
\end{equation}
Hessian-vector products (where $N=(m+n)r$ is the number of elements in $W^k$), with probability $1-\rho$, where $\Cmeo$ depends at most logarithmically on $\rho$ and $\epsilon$.
\end{assumption}

Assumption~\ref{assum:wccmeo} encompasses the strategies we mentioned in Section~\ref{subsec:meo}. 
Assuming the bound $U_H$ on $\|\nabla^2 G(W)\|$ to be available, for the Lanczos method with a random starting vector,
\eqref{eq:wccmeo} holds with $\Cmeo=\ln(2.75(nr+mr)/\rho^2)\sqrt{U_H}/2$. 
When a bound on $\|\nabla^2 G(W)\|$ is not available in advance, it can be estimated efficiently with minimal effect on the overall complexity of the method, see Appendix B.3 of~\cite{CWRoyer_MONeill_SJWright_2019}.

Under this assumption, we have the following corollary regarding the maximum number of gradient evaluations/Hessian vector products required by Algorithm~\ref{alg:stupid} to find a point satisfying our approximate second-order conditions \eqref{eq:optconds}.

\begin{corollary} \label{cor:wcc}
Suppose that the assumptions of Theorem~\ref{thm:wcc} are satisfied, and that Assumption~\ref{assum:wccmeo} is also satsified with $\Nmeo$ defined in \eqref{eq:wccmeo}.  
Then, with probability $(1-\rho)^{\Klarge}$, the number of gradient evaluations and/or Hessian-vector products required by Algorithm~\ref{alg:stupid} to output an iterate satisfying \eqref{eq:optconds} is at most
\begin{equation} \label{eq:sj7}
    \Nmeo \left(\Klarge +\Klocal \right)
+ 2 \frac{\log\hat{C} + \log\max(\epsg^{-1},\epsH^{-1})}{\log(1/(1-\numin \calpha \sigma_r(X^*)))} \Klocal,
\end{equation}
and $\Nmeo$ satisfies the upper bound
\[
\Nmeo \leq \min\left\{(n+m)r, 1+\left\lceil \sqrt{2}\Cmeo\cgamma^{-1/2}\sigma_r^{-1/2}(X^*)\right\rceil\right\}.
\]
\end{corollary}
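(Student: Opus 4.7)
The plan is to convert the iteration bounds of Theorem~\ref{thm:wcc} into a work bound measured in gradient evaluations and Hessian-vector products. Each outer iteration of Algorithm~\ref{alg:stupid} is one of three types: a large-gradient step, a negative-curvature step, or an iteration that initializes the local phase. The first type uses a backtracking gradient descent line search (only function/gradient evaluations). The second and third both require one call to Procedure~\ref{alg:meo}, which under Assumption~\ref{assum:wccmeo} costs at most $\Nmeo$ Hessian-vector products. Thus the total Hessian-vector product work from outer iterations is bounded by $\Nmeo$ times the outer iteration count $\Kouter=\Klarge+\Klocal$ from \eqref{eq:outeriterations}, which holds with probability at least $(1-\rho)^{\Klarge}$ by Theorem~\ref{thm:wcc}.

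Next I would account for work inside Algorithm~\ref{alg:local}. Each inner iteration performs one backtracking line search along $-\nabla G(W^k_t)$, so its cost is dominated by gradient evaluations (no additional Hessian-vector products are used inside the local phase). By Lemma~\ref{lem:mingamma}, on the high-probability event the local phase is invoked at most $\Klocal$ times, and by Lemma~\ref{lem:maxinneriters} each invocation takes at most $\T$ inner iterations. Summing, the local-phase contribution is bounded by $\Klocal \cdot \T$, which is exactly the second term in \eqref{eq:sj7}. Adding the two contributions yields the full expression \eqref{eq:sj7}.

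For the bound on $\Nmeo$ itself, Procedure~\ref{alg:meo} is always invoked by Algorithm~\ref{alg:stupid} with $\epsilon=\cgamma \gamma_k$. On the high-probability event of Lemma~\ref{lem:mingamma} we have $\gamma_k \ge \tfrac12 \sigma_r(X^*)$ for every $k$, hence
\[
\epsilon^{-1/2} = (\cgamma \gamma_k)^{-1/2} \le \sqrt{2}\, \cgamma^{-1/2}\sigma_r^{-1/2}(X^*).
\]
Substituting into \eqref{eq:wccmeo} from Assumption~\ref{assum:wccmeo}, together with $N=(n+m)r$, yields
\[
\Nmeo \le \min\!\left\{(n+m)r,\; 1+\left\lceil \sqrt{2}\,\Cmeo\,\cgamma^{-1/2}\sigma_r^{-1/2}(X^*)\right\rceil\right\},
\]
as claimed. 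The only subtlety is that the entire argument is conditioned on the high-probability event guaranteed by Lemma~\ref{lem:mingamma}; outside this event the bound $\gamma_k \ge \tfrac12 \sigma_r(X^*)$ may fail and the counting above need not apply, which is why the overall statement retains the probability $(1-\rho)^{\Klarge}$. The main obstacle, such as it is, is simply being careful to note that Algorithm~\ref{alg:local} contributes no Hessian-vector products (so $\Nmeo$ multiplies only the outer count) and that the $\Nmeo$ bound uses the same lower bound on $\gamma_k$ that underlies the iteration bounds of Theorem~\ref{thm:wcc}.
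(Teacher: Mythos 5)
Your proposal is correct and follows essentially the same accounting as the paper's proof: charging $\Nmeo$ per call to Procedure~\ref{alg:meo} (one per negative-curvature iteration and one per local-phase attempt, bounded by $\Klarge$ and $\Klocal$ respectively), charging one gradient evaluation per inner iteration of Algorithm~\ref{alg:local} bounded via Lemma~\ref{lem:maxinneriters}, and using $\gamma_k \ge \tfrac12\sigma_r(X^*)$ from Lemma~\ref{lem:mingamma} to bound $\Nmeo$. No gaps.
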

\begin{proof}
All large gradient steps and local iterations (iterations in Algorithm~\ref{alg:local}) require a single gradient evaluation.  
Thus, the number of gradient evaluations in the local phase is equal to the number of iterations in this phase.

Procedure~\ref{alg:meo} is invoked at every large negative curvature iteration and before each time the local phase is tried. With probability $(1-\rho)^{\Klarge}$, the maximum number of large negative curvature iterations is bounded by $\Klarge$ while the maximum number of times the local phase is entered is bounded by $\Klocal$.  
In addition, by Assumption~\ref{assum:wccmeo}, Procedure~\ref{alg:meo} requires at most $\Nmeo$ Hessian-vector products. 
Thus, the maximum number of gradient evaluations and/or Hessian-vector products required is bounded by the quantity in \eqref{eq:sj7}.

Since $\gamma_k \geq \frac12 \sigma_r(X^*)$ with probability $(1-\rho)^{\Klarge}$, it follows that
\begin{equation*}
\begin{split}
\Nmeo &= \min\left\{N, 1+\left\lceil\Cmeo\cgamma^{-1/2}\gamma^{-1/2}_k\right\rceil\right\} \\
&\leq \min\left\{(n+m)r, 1+\left\lceil \sqrt{2}\Cmeo\cgamma^{-1/2}\sigma_r^{-1/2}(X^*)\right\rceil\right\},
\end{split}
\end{equation*}
verifying the bound on $\Nmeo$.
\end{proof}

\section{Conclusion.}
We have described an algorithm that finds an approximate second-order point for robust strict saddle functions.
This method does not require knowledge of the strict saddle parameters that define the optimization landscape or a specialized initialization procedure. By contrast with other methods proposed recently for finding approximate second-order points for nonconvex smooth functions (see, for example, \cite{YCarmon_JCDuchi_OHinder_ASidford_2018,CWRoyer_MONeill_SJWright_2019}), the complexity is not related to a negative power of the optimality tolerance parameter, but depends only logarithmically on this quantity.
The iteration complexity and the gradient complexity depend instead on a negative power of $\sigma_r(X^*)$, the smallest nonzero singular value of the (rank-$r$) minimizer of $f$.


One future research direction lies in investigating whether accelerated gradient methods are suitable for use in the local phase of Algorithm~\ref{alg:stupid}. 
While effective in practice \cite{EJRPauwels_ABeck_YCEldar_SSabach_2017}, little is known about the convergence rate of these algorithms when the $(\alpha, \beta, \delta)$-regularity condition holds. 
In \cite{HXiong_YChi_BHu_WZhang_2020}, the authors showed that under certain parameter settings, accelerated gradient methods converge at a linear rate.
However, due to the techniques used, it is difficult to understand from this paper when this linear rate substantially improves over the convergence rate of gradient descent, leaving open interesting future research directions.


\bibliographystyle{siam} 
\bibliography{refs-strictsaddle}

\appendix
\section{Proof of \eqref{eq:ut1}} \label{app:matrixeqs}
By the definition of $W^*$ and the operator norm:
\begin{align*}
\|W^*\|^2 = \lambdamax(W^* (W^*)^\top)
&= \max_z \,  \frac{z^\top W^* (W^*) z}{\|z\|^2} \\
&= \max_{z_{\Phi},z_{\Psi}} \, \frac{\left[\begin{matrix}
z_{\Phi} \\ z_{\Psi} 
\end{matrix}\right]^\top \left[
\begin{matrix}
\Phi \Sigma \Phi^\top & \Phi \Sigma \Psi^\top \\
\Psi \Sigma \Phi^\top & \Psi \Sigma \Psi^\top
\end{matrix}
\right] \left[\begin{matrix}
z_{\Phi} \\ z_{\Psi} 
\end{matrix}\right]}{\|z_{\Phi}\|^2+ \|z_{\Psi}\|^2},
\end{align*}
where we have partitioned the vector $z$ in an obvious way. Letting $\phi_1$ denote the first left singular vector of $X^*$ and $\psi_1$ denote the first right singular vector of $X^*$. Then, it is clear that the maximum is obtained by setting $z_{\Phi} = \phi_1$ and $z_{\Psi} = \psi_1$, 
so that
\[
\max_{z_{\Phi},z_{\Psi}} \, \frac{\left[\begin{matrix}
z_{\Phi} \\ z_{\Psi} 
\end{matrix}\right]^\top \left[
\begin{matrix}
\Phi \Sigma \Phi^\top & \Phi \Sigma \Psi^\top \\
\Psi \Sigma \Phi^\top & \Psi \Sigma \Psi^\top
\end{matrix}
\right] \left[\begin{matrix}
z_{\Phi} \\ z_{\Psi} 
\end{matrix}\right]}{\|z_{\Phi}\|^2+ \|z_{\Psi}\|^2}
= \frac{4 \sigma_1(X^*)}{2} = 2 \sigma_1(X^*) = 2 \|X^*\|.
\]
To prove the second result, the definition of the Frobenius norm
gives
\[
\|W^* (W^*)^\top\|_F
= \sqrt{\sum_{i=1}^r \lambda_i \left(W^* (W^*)^\top\right)^2}.
\]
Now, let $\psi_i$ be the $i$-th left singular vector of $X^*$ and $\phi_i$ 
be the $i$-th right singular vector of $X^*$. It is clear that we obtain an
eigenvector for  the $i$th eigenvalue of $W^* (W^*)^\top$ by setting
$z_i = \left[\begin{matrix} \phi_i \\ \psi_i \end{matrix}\right]$. Similar to
the calculation above for $z_1$, we have
\[
\lambda_i \left(W^* (W^*)^\top\right) = \frac{z_i^\top W^* (W^*)^\top z_i}{\|z_i\|^2}
= 2 \sigma_i(X^*)
\]
and thus
\[
\sqrt{\sum_{i=1}^r \lambda_i \left(W^* (W^*)^\top\right)^2}
= \sqrt{\sum_{i=1}^r (2 \sigma_i(X^*))^2} =
2 \sqrt{\sum_{i=1}^r \sigma^2_i(X^*)} =
2 \|X^*\|_F.
\]

\end{document}